\newtheorem{theorem}{Theorem}[section]
\newtheorem{definition}[theorem]{Definition}
\newtheorem{example}[theorem]{Example}
\newtheorem{lemma}[theorem]{Lemma}
\newtheorem{proposition}[theorem]{Proposition}
\newtheorem{corollary}[theorem]{Corollary}
\newtheorem*{claim}{Claim}
\newcolumntype{H}{>{\setbox0=\hbox\bgroup}c<{\egroup}@{}}
\newcommand{\PI}{\textbf{Phase~I}\xspace}
\newcommand{\PII}{\textbf{Phase~II}\xspace}
\newcommand{\R}{\mathbb{R}}
\newcommand{\C}{\mathbb{C}}
\newcommand{\Diag}{\operatorname{Diag}}
\newcommand{\diag}{\operatorname{diag}}
\newcommand{\symm}{\mathbb{S}}
\renewcommand{\symm}{\mathbb{S}}
\newcommand{\PSD}{\mathbb{S}_{+}}
\newcommand{\PD}{\mathbb{S}_{++}}
\newcommand{\rank}{\operatorname{rank}}
\newcommand{\calV}{\mathcal{V}}
\newcommand{\SDP}{\text{SDP}}
\newcommand{\LP}{\text{LP}}
\DeclareMathOperator{\Null}{Null}
\DeclareMathOperator{\range}{Range}
\DeclareMathOperator{\conv}{conv}
\DeclareMathOperator{\CLQ}{CLQ}
\DeclareMathOperator{\thbody}{TH}
\DeclareMathOperator{\STAB}{STAB}
\renewcommand{\subset}{\subseteq}
\DeclareMathOperator*{\argmax}{arg\,max}
\tikzset{every picture/.style={line width=0.75pt}} 
\title{Rounding the Lov\'asz Theta Function with a\\Value Function Approximation} 
\author{Rui Gong\thanks{
\{rgong44,diego.cifuentes,atoriello\}@gatech.edu\\
\indent~~ Georgia Institute of Technology, Atlanta, GA 30332, USA\\
\indent~~ The authors' work was partially funded by the U.S.\ Office of Naval Research, N00014-23-1-2631.} \And 
Diego Cifuentes\footnotemark[2] \And 
Alejandro Toriello\footnotemark[2]}
\begin{document}
\maketitle
\begin{abstract}
The Lov{\'a}sz theta function is a semidefinite programming (SDP) relaxation for the maximum weighted stable set problem, which is tight for perfect graphs.
However, even for perfect graphs, there is no known rounding method guaranteed
to extract a maximum weighted stable set from the SDP solution. 
In this paper, we develop a novel rounding scheme for the theta function
that constructs a value function approximation 
from the SDP solution
and then generates a stable set using dynamic programming.
Our method provably recovers a maximum weighted stable set in several sub-classes of perfect graphs,
including generalized split graphs,
which asymptotically cover almost all perfect graphs.
To the best of our knowledge, this is the only known rounding strategy for the theta function that recovers a maximum weighted stable set for large classes of perfect graphs. 
Our rounding scheme relies on simple linear algebra computations;
we only solve one SDP. 
In contrast, existing methods 
that leverage the theta function require solving multiple SDPs. 
Computational experiments show that our method produces good solutions even on imperfect graphs.
\keywords{stable set \and 
Lov\'asz theta function \and 
value function approximation \and 
semidefinite program \and 
perfect graph.}
\end{abstract} 

\section{Introduction}
\label{sec:intro}
Semidefinite programming (SDP) relaxations provide a tractable approach for tackling hard combinatorial optimization problems.
Two of the most studied cases are SDP relaxations for the \emph{maximum weighted stable set (MWSS) problem} , known as the \emph{Lov\'asz theta function}, and for the maximum cut problem.
If the relaxation is tight and the optimal solution has rank one, it is easy to recover a solution of the combinatorial problem from the SDP solution.
However, if the optimal solution has higher rank, 
a \emph{rounding} procedure may be needed, even if the upper bound from the relaxation is tight. 
Devising good rounding procedures is hence of great importance both in theory and in practice.
For example, Goemans and Williamson \cite{10.1145/227683.227684} introduced a randomized rounding algorithm for the maximum cut SDP, which produces a solution with an approximation factor of roughly $0.878$. 
While some heuristic rounding strategies have been proposed for MWSS problem using the theta function \cite{Yildirim-Fan,sublinear-time-Alizadeh},
none of them have theoretical guarantees.
In this paper, we introduce a rounding scheme for this SDP 
that provably finds an MWSS in several important subclasses of perfect graphs, which asymptotically include almost all perfect graphs. 
Moreover, our rounding scheme performs well in practice, even in imperfect graphs.

Given a simple graph $G=(N,E)$ and a weight function $w:N\to\R_{++}$, the MWSS problem seeks the stable set 
$S\subseteq N $ that maximizes $ \sum_{i\in S} w_i $;
a set $S$ is \emph{stable} (or \emph{independent} or a \emph{packing}) if $ij \not\in E$ for every pair of vertices $i, j \in S$.
The stability number $\alpha(G;w)$ is the optimal value of the problem;
computing $\alpha(G;w)$ is NP-hard for general graphs~\cite{NP-hard,Karp1972}.
Lov\'asz introduced a quantity $\vartheta(G;w)$,
the {theta function},
which upper bounds $\alpha(G;w)$ 
and can be efficiently computed by solving an SDP \cite{1055985,ellipsoid_MSS}.
Indeed, $\vartheta(G;w)$ is the optimal value of the following primal-dual pair of SDPs \cite{Grötschel1993,ellipsoid_MSS,GROTSCHEL1984325}:
\begin{multicols}{2}
    \noindent
    \begin{align}
\label{SDP-P}
\tag{SDP-P}
\begin{split}
    \max\quad &w^\top x\\
    \text{s.t.}\quad &X_{ii}=x_i, \forall i\in N\\
    &X_{ij}=0,\forall ij\in E\\
    &X-xx^\top\succeq 0
\end{split}
\end{align}
\columnbreak
\begin{align}
\label{SDP-D}
\tag{SDP-D}
\begin{split}
    \min\quad &t\\
    \text{s.t.}\quad
    &Q_{ii}=-2q_i-w_i,\forall i \in N\\
    &Q_{ij}=0,\forall ij\notin E\\
    &tQ-qq^\top\succeq 0 .
\end{split}
\end{align}
\end{multicols}
\noindent
Our rounding method relies on an optimal solution of the primal-dual pair above.

The MWSS problem can be solved in polynomial time for an important class of graphs, the \emph{perfect} graphs; 
this follows because $\alpha(G;w) = \vartheta(G;w)$ for these graphs \cite{Grötschel1993}.
Hence, in a perfect graph we can determine whether a vertex $i \in N$ belongs to an MWSS by checking if $\vartheta(G;w) = \vartheta(G|_{N \setminus i};w)$.
We can thus extract an MWSS 
by solving $\mathcal{O}(n)$ SDPs~\cite{GROTSCHEL1984325};
see \cite{Yildirim-Fan} for an improved scheme
that uses $\min\{\alpha(G;w), n/3\}$ SDPs. 
Alternatively, Alizadeh \cite{sublinear-time-Alizadeh} considers a randomized algorithm that requires perturbing the weight vector $w$ up to $\mathcal{O}(\log (1/\epsilon))$ times in order to guarantee that \eqref{SDP-P} has a unique rank-one optimal solution with probability $1 - \epsilon$. 
All previous polynomial-time methods for the MWSS problem in perfect graphs require solving multiple SDPs.
In addition, they do not rely on rounding an SDP solution, but instead either use the SDP optimal value or require an exact, rank-one optimal solution;
in contrast, our rounding procedure requires solving \eqref{SDP-D} once.

To describe the main idea behind our rounding method, 
let $(x,X), (t,q,Q)$ be optimal solutions of \eqref{SDP-P}, \eqref{SDP-D} respectively.
We consider $\calV_{\SDP} : 2^N \to \R_{+}$ defined as
\begin{align}
    \label{eq:V_SDP}
    \calV_{\SDP}(S) & := q_S^\top  Q_S^{\dagger}  q_S, \quad S \neq \emptyset, 
& \calV_{\SDP}(\emptyset) & := 0,
\end{align}
where $q_S$ and $Q_S$ are respectively the sub-vector of $q$ and principal sub-matrix of $Q$ indexed by $S$, 
and $Q_S^\dagger$ denotes the Moore–Penrose pseudo-inverse of $Q_S$. 
$\calV_{\SDP}$ can be computed either using an eigenvalue decomposition or using equivalent characterizations we introduce in later sections. 
Our rounding algorithm evaluates $\calV_{\SDP}$ at most $\mathcal{O}(n^2)$ times, and produces a stable set $S \subseteq N$ for an arbitrary graph~$G$.
Notice that $q$ and $Q$ are fixed and given by solving ~\eqref{SDP-D} once. 
The function $\calV_{\SDP}$ is monotone and satisfies the following important property,
\begin{equation}
\label{eq:valuefun}
    \calV(S)- \calV( S \setminus (\{i\}\cup\delta_i) ) \geq w_i, \qquad
     S \subseteq N, i \in S,
\end{equation}
where $\delta_i$ denotes the set of neighbors of $i$ in $G$.
More generally, we call $\calV: 2^N \to \R_+$ a \emph{value function approximation} (VFA) for the MWSS problem if the function is monotone, $\calV(\emptyset) = 0$, and it satisfies \eqref{eq:valuefun}.
The term \emph{value function }comes from dynamic programming (DP). 
If we view the construction of a stable set as a sequential decision process, the \emph{state} is the set of available vertices $S \subseteq N$. 
Selecting a vertex $i \in S$ yields an immediate reward of $w_i$, and transitions the state to the remaining available vertices $S \setminus (\{i\} \cup \delta_i)$. 
In this context, the optimal value function—which represents the maximum possible weight achievable from state $S$—is exactly the stability number $\mathcal{V}^*(S) := \alpha(G|_S; w)$. 
By the Bellman equation, this optimal value function naturally satisfies \eqref{eq:valuefun}.

Our rounding procedure starts by discarding all vertices $i$ with $x_i = 0$.
We then arbitrarily select a remaining vertex to be included in the stable set and discard its neighbors.
We keep discarding vertices with
\(
\calV_{\SDP}(S) - \calV_{\SDP}( S \setminus ( \{i\} \cup \delta_i ) ) > w_i
\)
and selecting until no vertices are left, and then return the selected set of vertices.
This process may sometimes lead to choosing a vertex incorrectly, 
so we use a DP technique known as a one-step \emph{look-ahead} to prevent this from occurring: we simulate a vertex choice and check if it leads to a suboptimal stable set; if so, we backtrack and delete that vertex.  
The output of our procedure is always a stable set; we show that it is indeed an MWSS for several important subclasses of perfect graphs. 

The analysis of our rounding scheme uses the clique linear programming (LP) relaxation of the MWSS problem.
Consider the primal-dual pair of LPs
\begin{multicols}{2}
\noindent
\begin{align}
\label{LP-P}
\tag{LP-P}
\begin{split}
    \max_{x\geq 0} &\ w^{\top}x\\
    \text{s.t.} &\ \sum_{i\in C} x_i\leq 1,\forall C\in \mathcal{C}(G),
\end{split}
\end{align}
\columnbreak
\begin{align}
\label{LP-D}
\tag{LP-D}
\begin{split}
    \min_{\mu\geq 0}&\ \sum_{C\in \mathcal{C}(G)}\mu_{C}\\
    \text{s.t.} &\ \sum_{C\ni i}\mu_C\geq w_i,\forall i\in N,
\end{split}
\end{align}
\end{multicols}
\noindent
where $\mathcal{C}(G)$ is the set of all cliques in $G$; this relaxation is tight and the primal polyhedron of \eqref{LP-P} is integral for perfect graphs \cite{Grötschel1993}. 
Given a solution of \eqref{LP-D},
we can construct another VFA $\calV_{\LP} : 2^N \to \R_+$ and apply our algorithm with $\calV_{\LP}$.
We emphasize that our algorithm does not solve \eqref{LP-D}, but we use it to analyze our rounding scheme with \eqref{SDP-D}.
Our main results are stated next.
\begin{theorem}[Informal]
\label{thm1}
Our rounding algorithm based on either \eqref{LP-D} or \eqref{SDP-D} outputs a maximum weighted stable set for generalized split, chordal, and co-chordal graphs.
\end{theorem}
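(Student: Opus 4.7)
The plan is to show that for the graph classes in question, the VFA $\calV$ used by the algorithm agrees with the true value function $\calV^*(S) := \alpha(G|_S;w)$ on every subset $S$ encountered during the algorithm's execution. Once this agreement is established, a standard DP principle-of-optimality argument shows that the greedy selection rule with one-step look-ahead must output an optimal stable set. Concretely, I would first prove a generic reduction lemma stating: if $\calV$ is a VFA for which $\calV(S) = \calV^*(S)$ along every trajectory explored by the algorithm, then the output is a maximum weighted stable set. The proof is by induction on $|S|$: when the algorithm selects $i$ it has verified $\calV(S) - \calV(S\setminus(\{i\}\cup\delta_i)) = w_i$, and using $\calV = \calV^*$ this says $i$ lies in some optimal stable set of $G|_S$; the look-ahead rules out the case where equality holds but $i$ cannot actually be extended to an optimum.

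Next, I would reduce the SDP case to the LP case. Both $\calV_{\SDP}$ and $\calV_{\LP}$ are VFAs, so both upper bound $\calV^*$ pointwise (this should follow by exhibiting, for any stable set $S'\subseteq S$, a feasible dual/primal witness in the relevant clique/SDP program restricted to $S$). For a hereditary class where $\vartheta(G|_S;w) = \alpha(G|_S;w)$ for every $S$, the chain $\calV^*(S) \le \calV_{\SDP}(S) \le \vartheta(G|_S;w) = \alpha(G|_S;w)$ forces equality throughout; the analogous chain using the tight clique LP handles $\calV_{\LP}$. Chordal, co-chordal, and generalized split graphs are all hereditary and perfect, so this reduction gives the needed pointwise equality $\calV = \calV^*$ on every induced subgraph that the algorithm might query.

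With the reduction in place, the remaining content is purely structural. For chordal graphs, I would cite (or re-derive) the integrality of the clique LP and the fact that chordality is preserved under vertex deletion, which directly yields $\calV_{\LP}(S) = \alpha(G|_S;w)$. For co-chordal graphs, I would pass to the complement: a maximum weighted stable set in $G$ corresponds to a maximum weighted clique in $\bar G$, and again both the complement class is hereditary and the relevant LP/SDP value equals $\alpha$. Generalized split graphs, defined by a partition into a chordal part and a co-chordal part with controlled edges between them, are then handled by decomposing $S$ across this partition and invoking the two previous cases on each side, using the VFA inequality \eqref{eq:valuefun} to track the cross-edges.

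The main obstacle will be the generalized split case. The chordal and co-chordal pieces independently satisfy $\calV=\calV^*$, but the algorithm explores subsets $S$ that mix both sides, and the interaction through cross-edges must not create a gap between $\calV$ and $\calV^*$. I expect the key technical step to be constructing, from a clique cover on each side, a single combined dual LP witness whose value equals $\alpha(G|_S;w)$; the look-ahead step is what rules out the pathological case where a mixed vertex has $\calV(S)-\calV(S\setminus(\{i\}\cup\delta_i)) = w_i$ yet lies in no optimum. Handling this carefully, possibly via induction on the size of the chordal (or co-chordal) part of $S$, is where most of the genuine work sits.
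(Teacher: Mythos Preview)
Your central reduction in the second paragraph is wrong, and with it the whole approach collapses. You claim the chain $\calV^*(S) \le \calV_{\SDP}(S) \le \vartheta(G|_S;w)$. The second inequality is reversed. The pair $(q_S,Q_S)$ obtained by restricting the full dual solution is \emph{feasible} for \eqref{SDP-D} on $G|_S$, so the minimum $\vartheta(G|_S;w)$ is at most $\calV_{\SDP}(S)$, not at least. The same holds for $\calV_{\LP}$: restricting the clique cover $\mu^*$ to cliques meeting $S$ gives a feasible (not optimal) dual for $G|_S$. Thus perfectness of the class only tells you $\alpha(G|_S)=\vartheta(G|_S)\le\calV_{\SDP}(S)$, which is already Lemma~\ref{lem:VFA_Bound} and gives no equality. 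A two-line counterexample: on the chordal path $1\text{--}2\text{--}3$ with unit weights, the unique strictly complementary dual has $\mu^*_{\{1,2\}}=\mu^*_{\{2,3\}}=1$, so $\calV_{\LP}(\{2\})=2>1=\alpha(\{2\})$. The VFA is \emph{not} the true value function on all induced subgraphs, even on a three-vertex chordal graph; equality holds only along an optimal trajectory (Lemma~\ref{lem:fund_VFA_prop}(iii)), which is precisely what you are trying to prove.

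The paper's argument is therefore of a completely different nature. For $\calV_{\LP}$ it develops a combinatorial interpretation via essential cliques (Definition~\ref{defn:ess clique}), shows that a bad choice forces an induced ``house'' (Proposition~\ref{prop:house_structure}), and then proves class-by-class that either no house can appear (chordal), or a perfect elimination ordering supplies a safe vertex (co-chordal), or a direct structural argument on the center/cluster decomposition works (unipolar, Lemma~\ref{lem:unipolar}). The SDP case is \emph{not} handled independently: Theorem~\ref{thm:SDP_beat_LP} shows that whenever $\calV_{\LP}$ discards a vertex, $\calV_{\SDP}$ does too, via a delicate range/rank argument on the dual matrices. Finally, your description of generalized split graphs (``a partition into a chordal part and a co-chordal part'') is not the definition used here; see Definition~\ref{defn:subclasses_of_perfect}. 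A generalized split graph is unipolar or co-unipolar, and the proof treats these two cases separately (the co-unipolar case even requires running Phase~II twice).
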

The precise statement of our main theorem is given in Section~\ref{sec:preliminaries}. 
Note that almost all perfect graphs are generalized split graphs in an asymptotic sense \cite{Prömel_Steger_1992}.
To the best of our knowledge, our methods give the only known rounding strategy for the Lov\'asz theta function that provably works for large subclasses of perfect graphs.

The rest of the paper is organized as follows. Section~\ref{sec:preliminaries} provides background, properties of a VFA, our algorithm and the formal statement of the main results.
Section~\ref{sec:LPVFA} analyzes our algorithm with $\calV_{\LP}$, discusses its combinatorial interpretation, and highlights the necessity of the look-ahead. 
Then Section~\ref{sec:SDPVFA} presents the analysis of our algorithm with $\calV_{\SDP}$ and proves the main results.  
Section~\ref{sec:comp_exp} describes our computational study.

\subsection{Brief Literature Review}

Beyond the work mentioned above, there is a huge body of literature on the MWSS problem. 
While it is NP-hard for general graphs~\cite{Karp1972,NP-hard}, there are polynomial-time combinatorial algorithms for various graph classes, including circular graphs and their complements~\cite{Gavril1973AlgorithmsFA}, circular arc graphs and their complements~\cite{Gavril1974AlgorithmsOC,DBLP:journals/networks/GuptaLL82}, graphs without long odd cycles~\cite{wo_long_odd_cycle}, claw-free graphs and $\ell$-claw-free graphs~\cite{10.1145/2629600,10.5555/2095116.2095218}, and graphs without two disjoint odd cycles~\cite{2odd-cycle-free}.
For graphs without holes of length at least five, the results in \cite{doi:10.1137/20M1383732} imply an $n^{\mathcal{O}(k)}$ algorithm, where $k$ is an upper bound for the treewidth. 
Recently, \cite{walteros_why_2020} gave a fixed-parameter-tractable algorithm for the cardinality (unweighted) objective that depends exponentially on $g:=(d+1)-\alpha$, where $d,\alpha$ are respectively the degeneracy and unweighted stability number. There are polynomial-time combinatorial algorithms for the cardinality objective on some sub-classes of perfect graphs, including chordal graphs \cite{10.1137/0201013} and generalized split graphs \cite{ESCHEN2014195}, which we also study. Unlike these works, which are tailored to a graph class, our algorithm is generic, works with arbitrary weights, and can be applied to any perfect graph; it furthermore exhibits favorable computational performance. To the best of our knowledge, there is no known polynomial-time combinatorial algorithm for general perfect graphs.

There are also exact approaches for the MWSS problem based on integer programming techniques. These include branch-and-bound \cite{CARRAGHAN1990375,Babel,Babel_Tinhofer,doi:10.1137/0215075,Balas1991MinimumWC,PARDALOS1992363,Mannino_Sassano,NemSigismondi}, which often derive bounds from clique covers, and cutting-plane algorithms, e.g.\ \cite{BalasSamuelsson,NemSigismondi,NemTrotter}. 
There are also highly effective heuristic and meta-heuristic methods for the stable set problem, such as tabu search \cite{FRIDEN1990437}. The algorithm proposed in \cite{BensonYe} is particularly relevant for us, as it uses \eqref{SDP-P}; despite having no theoretical guarantees, it performs quite well in our computational experiments.
For further study on relevant algorithms, we refer the reader to~\cite{WU2015693,marino2024shortreviewnovelapproaches} and the references therein.

Our algorithm can be adapted to efficiently output all maximum weighted stable sets (rather than one of them) for unipolar, chordal and co-chordal graphs. 
Algorithms that enumerate all optimal solutions of a problem have a long history in combinatorial optimization, beginning with ranked-enumeration frameworks for assignments \cite{doi:10.1287/opre.16.3.682}, and the $K$-best procedure \cite{doi:10.1287/mnsc.18.7.401}, which shows how to systematically recover an entire family of optimal solutions rather than a single optimum.
For stable sets, however, all-optima results are much rarer than single-solution algorithms to our knowledge: \cite{OKAMOTO2008229} show that all maximum stable sets of a chordal graph can be enumerated in constant amortized time per output, and \cite{143921} consider the problem of finding all maximum weighted stable sets in interval and circular-arc graphs. 
By contrast, standard references for unipolar graphs, e.g.\ \cite{ESCHEN2014195}, focus on computing one maximum stable set efficiently rather than enumerating the full family of optimal solutions.

\subsection{Contributions}
\begin{itemize}
    \item  We introduce a novel rounding method for extracting a stable set from the optimal solution of the Lov\'asz theta function SDP. 
    Our rounding procedure relies on a VFA constructed from the dual variables.
    \item Our rounding procedure is guaranteed to extract an MWSS for generalized split, chordal, and co-chordal graphs, and can be modified to efficiently output all MWSS for unipolar, chordal and co-chordal graphs.
    To our knowledge, this is the first rounding procedure for the theta function SDP that provably works for any of these classes.
    \item We test our method in several perfect and imperfect graphs with up to 60,000 nodes.
    The experiments show that our method produces near-optimal solutions and is computationally efficient; the majority of the runtime is taken up by solving the SDP.
\end{itemize}
%

\subsection{Notation}
We let $\R$ be the real numbers.  
For a finite set $N$, we denote the set of $N\times 1$ vectors, non-negative vectors and positive vectors by $\R^{N}$, $\R^{N}_{+}$, $\R_{++}^{N}$ respectively; the sets of $N\times N$ symmetric matrices, positive semidefinite (PSD) matrices, and positive definite matrices are $\symm^N$, $\PSD^N$, and $\PD^{N}$, respectively. 
For $A,B\in\symm^N$, the partial order $A\succeq B$ is defined by $A-B\in\PSD^{N}$, and similarly, $A\succ B$ is defined by $A-B\in\PD^N$.
For $N=[n]$ or when $|N|=n$, we sometimes use $\R^{n}$ instead of $\R^{N}$ for convenience, and the same applies to other sets with superscript~$N$.
For any $q\in\R^{N},Q\in\symm^N$ and $S\subseteq N$, we respectively let $q_S$, $Q_S$ denote the sub-vector and principal sub-matrix of $q,Q$ indexed by~$S$. 
Given a matrix $D \in \symm^n$,
let $\diag (D) = (D_{11},\dots, D_{nn})\in \R^n$ be the vector consisting of its diagonal entries.
Conversely, given $d \in \R^n$,
let $\Diag (d) \in \symm^n$ be the diagonal matrix with $d$ in its diagonal.

Throughout the paper, assume that $G=(N,E)$ is a simple undirected graph, where $N$ is the set of vertices, $E$ is the set of edges, and $n:=|N|$, $m:=|E|$. 
Denote the complement of $G$ by $\overline{G}$, $N\setminus S$ by $\overline{S}$, and let $G|_{S}$ be the induced subgraph of $G$ on $S\subseteq N$.
We let $w:N\to\R_{++}$ denote a weight function over~$N$.
For a vertex $i\in N$, we let $\delta_i$ denote the set of its neighbors in $G$, and let $\delta_S$ denote the set of vertices $j$ such that $j\in \overline{S}$ and $j\in\delta_i$ for some $i\in S$.
A set $S\subseteq N$ is \emph{stable} in $G$ if no two vertices in $S$ are adjacent, and is a \emph{clique} if every two vertices in $S$ are adjacent.
A stable set (clique) $S$ is a \emph{maximum weighted stable set (clique)} if it maximizes $\sum_{i\in S} w_i$ among all stable sets (cliques) in $G$. 
For $ w $ and $ S \subseteq N $, we let $ \alpha(S) := \alpha(G \vert_S; w )$ denote the weight of a max stable set contained in $S$.

\section{Preliminaries and Algorithm}
\label{sec:preliminaries}

In this section, we detail necessary background, 
then introduce our algorithms and some properties of the VFA.  
Finally, we introduce the VFAs based on LP and SDP, and state the main results.

\subsection{Perfect Graphs and Convex Relaxations}

A graph $G$ is \emph{perfect} if the chromatic number $\chi(G')$ equals the clique number $\omega(G')$ for every induced subgraph $G'$ of~$G$.
Equivalently, $G$ is perfect if and only if it does not contain a chordless odd cycle (an \emph{odd hole}) or its complement (an \emph{odd anti-hole}) as an induced subgraph \cite{StrongPerfectGraph}.

The MWSS problem can be formulated as an LP over the \emph{stable set polytope},
\[
\STAB(G):=\conv\{x\in\{0,1\}^N:x\text{ is an incidence vector of a stable set in }G\}.
\]
Let $\CLQ(G) \subset \R^N$ be the feasible set of~\eqref{LP-P},
and let $\thbody(G) \subset \R^N$ be the projection of the feasible set of \eqref{SDP-P} onto the $x$ variables; 
$\thbody(G)$ is known as the \emph{theta body} of $G$~\cite{thetabody}. 
The following relations hold~\cite{Grötschel1993,ellipsoid_MSS,GROTSCHEL1984325}:
\begin{align}
\label{relation:perfect graph}
& \STAB(G)\subset\thbody(G)\subset\CLQ(G); & 
& \STAB(G)=\thbody(G)=\CLQ(G),
\text{ if } G \text{ is perfect.}    
\end{align}
Thus, the problems \eqref{LP-P} and \eqref{SDP-P} are convex relaxations of the MWSS problem, and both relaxations are tight for perfect graphs. 

Problem~\eqref{SDP-P} can be solved in polynomial time using interior point methods~\cite{nesterov1994interior}. 
Interior point methods return a point in the relative interior of the \emph{optimal face} of \eqref{SDP-P};
see, e.g., \cite[Theorem 3.7]{doi:10.1137/0805002}. 
We call such a point a \emph{relative interior solution}.
This property of interior point methods contrasts with methods such as simplex,
which return extreme points of the optimal face.

\subsection{Retrieving a Stable Set from a VFA}

Let $G=(N,E)$ be a simple undirected graph and $w:N\to\R_{++}$ be a weight function over $N$. 
For a set of vertices $I\subseteq N$, we let $\alpha(I)$ denote the weighted stability number of the induced subgraph $G|_I$. 
Interpreted as a set function $ \alpha: 2^N \to \R_+ $, it is the optimal value function of the MWSS problem. 
A VFA is a function $\calV:2^N\to\R_{+}$ with: 
\begin{enumerate}[leftmargin=*,label=(\roman*)]
    \item $\calV(\emptyset)=0.$
    \item $\calV$ is monotone, $\calV(I)\leq \calV(J)$ for $I\subseteq J$.
    \item $\calV$ satisfies \eqref{eq:valuefun}.
\end{enumerate}
We say a VFA is \emph{tight} if $\calV(N) = \alpha(N)$.
We now show that a VFA upper bounds the optimal value function, $\alpha(I)$.
\begin{lemma}
\label{lem:VFA_Bound}
    Given a VFA $\calV$, 
    $\calV(I)\geq\alpha(I)$ for all $I\subseteq N$. 
\end{lemma}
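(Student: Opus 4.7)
The plan is a straightforward induction on $|I|$, mirroring the dynamic-programming Bellman recursion that the VFA axioms are designed to upper bound. The base case $|I|=0$ is immediate from property~(i): $\calV(\emptyset)=0=\alpha(\emptyset)$. For the inductive step, fix $I$ and let $S^\ast \subseteq I$ be a maximum weighted stable set in $G|_I$, so $\alpha(I) = \sum_{j \in S^\ast} w_j$. If $S^\ast = \emptyset$, then $\alpha(I) = 0 \leq \calV(I)$ by (i) and monotonicity~(ii), and we are done.

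Otherwise pick any $i \in S^\ast$. Property~(iii), applied with $S=I$, gives $\calV(I) \geq \calV(I \setminus (\{i\} \cup \delta_i)) + w_i$. The key observation is that since $S^\ast$ is stable, no vertex of $S^\ast \setminus \{i\}$ is adjacent to $i$ in $G$; hence $S^\ast \setminus \{i\}$ is disjoint from $\delta_i$ and lies in $I \setminus (\{i\} \cup \delta_i)$, where it remains a stable set. Consequently $\alpha(I \setminus (\{i\} \cup \delta_i)) \geq w(S^\ast \setminus \{i\}) = \alpha(I) - w_i$. Applying the inductive hypothesis to the strictly smaller set $I \setminus (\{i\} \cup \delta_i)$ yields $\calV(I \setminus (\{i\} \cup \delta_i)) \geq \alpha(I) - w_i$, and combining with~(iii) gives $\calV(I) \geq \alpha(I)$, completing the induction.

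The ``in particular'' clause follows by specialization to $I=\{i\}$, where $\alpha(\{i\}) = w_i$; alternatively, one can apply~(iii) directly with $S=\{i\}$ and use $\{i\} \setminus (\{i\} \cup \delta_i) = \emptyset$ together with~(i). There is no real obstacle here: the proof amounts to verifying that the three VFA axioms exactly track the recurrence $\alpha(I) = \max_{i \in I} \bigl( w_i + \alpha(I \setminus (\{i\} \cup \delta_i)) \bigr)$, with (iii) providing the per-step increment, (ii) handling empty-stable-set cases, and (i) anchoring the induction.
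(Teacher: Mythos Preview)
Your proof is correct and is essentially the same argument as the paper's. The paper simply unrolls your induction explicitly: it writes a telescoping chain $\calV(I)\geq w_1+\calV(I\setminus(\delta_{1}\cup \{1\}))\geq \dotsb \geq \sum_{i\in S^*}w_i+\calV(I\setminus (\delta_{S^*}\cup S^*))$, then observes that $I\setminus(\delta_{S^*}\cup S^*)=\emptyset$ since a maximum-weight stable set is maximal when $w>0$; your inductive step is exactly one link in this chain.
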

\begin{proof}
    For $I=\{1,\ldots,s,{s+1},\ldots, |I|\}$, without loss of generality, assume $S^*=\{1,\ldots,s\}$ is an MWSS of $G|_I$. 
    Then by applying~\eqref{eq:valuefun} repeatedly,
    \[
    \calV(I)\geq w_1+\calV(I\setminus(\delta_{1}\cup \{1\}))\geq \dotsb \geq \sum_{i\in S^*}w_i+\calV(I\setminus (\delta_{S^*}\cup S^*))=\alpha(I).
    \]
    The last inequality follows because $S^*$ is an MWSS of $G|_I$, and thus $I \subseteq (\delta_{S^*}\cup S^*) $. 
\end{proof}
The following are sufficient conditions for a set function to be a VFA.
\begin{lemma}\label{lem:VFAconditions}
    Let $\calV : 2^N \to \R_+$ satisfy
    (i) $\calV(\emptyset) = 0$,
    (ii) $\calV$ is monotone,
    (iii) $\calV(\{i\}) \geq w_i$ for $i\in N$,
    and
    (iv) $\calV(I \cup J) = \calV(J) + \calV(I)$
    for all disjoint $I, J \subseteq N$ with no edge between them.
    Then $\calV$ is a VFA.
\end{lemma}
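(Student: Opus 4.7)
The plan is simply to verify the only nontrivial VFA requirement, namely \eqref{eq:valuefun}, since conditions (i) and (ii) of the hypothesis already match the first two VFA requirements verbatim. Fix $S\subseteq N$ and $i\in S$, and set $T := S \setminus (\{i\}\cup\delta_i)$. The goal reduces to showing $\calV(S) \geq \calV(T) + w_i$.

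The key observation is that $\{i\}$ and $T$ are disjoint and that there is no edge between them: by the very definition of $T$, no vertex of $T$ is a neighbor of $i$. So hypothesis (iv) applies to the pair $(\{i\}, T)$, giving $\calV(\{i\}\cup T) = \calV(T) + \calV(\{i\})$, and then hypothesis (iii) yields $\calV(\{i\}\cup T) \geq \calV(T) + w_i$.

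To finish, I would use monotonicity (ii): since $T$ and $\{i\}$ are both subsets of $S$, their union $\{i\}\cup T \subseteq S$, and thus
\[
\calV(S) \;\geq\; \calV(\{i\}\cup T) \;\geq\; \calV(T) + w_i \;=\; \calV\bigl(S\setminus(\{i\}\cup\delta_i)\bigr) + w_i,
\]
which is exactly~\eqref{eq:valuefun}. There is no real obstacle here; the only subtle point is recognizing that the natural decomposition to use is $\{i\}$ versus $S\setminus(\{i\}\cup\delta_i)$ (rather than trying to split $S$ itself as a disjoint union with no edges, which need not be possible because vertices in $T$ may still be adjacent to other vertices in $\delta_i\cap S$). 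Once that decomposition is in hand, the proof reduces to a two-line application of (iv), (iii), and (ii) in that order.
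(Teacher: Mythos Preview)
Your proof is correct and essentially identical to the paper's: both set $T = S\setminus(\{i\}\cup\delta_i)$, apply (iv) to the pair $(\{i\},T)$, then use (iii) and monotonicity (ii) to conclude $\calV(S)\geq \calV(\{i\}\cup T)=\calV(\{i\})+\calV(T)\geq w_i+\calV(T)$.
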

\begin{proof}
Suppose all the conditions above hold.
    Since there is no edge between $i$ and $J:=I\setminus(\{i\}\cup\delta_i)$, by (iv) we have that $\calV(i\cup J)=\calV(i)+\calV(J)$. 
    Using (ii) and (iii) we get that
    \[\calV(I)\geq \calV(i\cup J)=\calV(i)+\calV(J)\geq w_i+\calV(J).\]
    Hence, $\calV$ is a VFA. 
\end{proof}

Algorithm \ref{alg:look ahead}
constructs a stable set from any VFA.
The algorithm maintains a stable set $S \subset N$
and a set $I \subset N$ of vertices yet to be processed.
When a vertex $i$ is selected to be in $S$,
it is discarded from $I$ together with its neighbors.
Algorithm \ref{alg:look ahead} is designed to work with a tight VFA;
our main theoretical contribution is the analysis of Algorithm \ref{alg:look ahead} on several classes of perfect graphs, where we can construct tight VFAs because of \eqref{relation:perfect graph}. 

\begin{algorithm}[htb]
\begin{algorithmic}[1]
\State\textbf{Input:} $G=(N,E)$, a weight function $w$, optimal $x^*\in\STAB(G)$, and a tight VFA $\calV$.
\item[\textbf{Phase I:}]
\State $S \gets \emptyset$
\Comment{Start with the empty stable set.}
\State $I \gets \{ i \in N : x_i^* >0 \}$\label{alg_step:subopt by primal}
\Comment{Discard vertices not in any optimal set.}
\item[\textbf{Phase II:}]
\While{$I\neq\emptyset$}
    \State $I' \gets I$
    \State $I' \gets I'\setminus(\{i\}\cup\delta_i)$ for an arbitrary $i\in I'$\label{alg_step:select}
    \Comment{Tentatively select $i$ and discard $\delta_i$.}
    \While{$\exists j\in I'$ with $\calV(I')-\calV(I'\setminus(\{j\}\cup\delta_j))> w_j$}\label{alg_step:discard}
        \State $I' \gets I'\setminus \{j\}$
        \Comment{Discard vertices that cannot be in an optimal set with $i$.}
    \EndWhile
    \If{$\calV(I)>\calV(I')+w_i$}\label{alg_step:lookahead start}
    \Comment{Check if $i$ is a suboptimal choice.}
        \State $I \gets I\setminus\{i\}$
    \Else  
        \State $I \gets I'$, $S \gets S\cup\{i\}$
        \Comment{Confirm choice of $i$ and continue.}
    \EndIf\label{alg_step:lookahead end}
\EndWhile
\State\textbf{Output:} Return $S$, a stable set of $G$.
\end{algorithmic}
\caption{Retrieving a stable set from a tight VFA with one-step look ahead} 
\label{alg:look ahead}
\end{algorithm}

\begin{definition}
    During each iteration of~\PII~of Algorithm~\ref{alg:look ahead}, if the set of selected vertices $S$ is a subset of an MWSS of $G$, we say we are on an \emph{optimal trajectory}.
    If an iteration starts with the remaining set of vertices $I$ and $S\cup\{i\}$ is a subset of an MWSS of $G$ for some $i\in I$, then we say $i$ is an \emph{optimal choice} of this iteration or $i$ is \emph{on an optimal trajectory}, otherwise, we say $i$ is \emph{suboptimal} or not on an optimal trajectory.
\end{definition}

After making a copy $I'$ of $I$ and tentatively selecting a vertex $i$ in \PII, Algorithm~\ref{alg:look ahead} tentatively discards vertices that are not in any MWSS of the current graph $G|_{I'}$. 
It then checks if $i$ is indeed an optimal choice: 
if not, Algorithm~\ref{alg:look ahead} returns to $I$, deletes $i$, and continues;
otherwise, Algorithm~\ref{alg:look ahead} adds $i$ to $S$, updates $I$ as $I'$ and continues.
The following lemma justifies these claims.
\begin{lemma}
\label{lem:fund_VFA_prop}
    Let $I \subseteq N$, $i \in I$.
    A VFA $\calV$ has the following properties:
    \begin{enumerate}[leftmargin=*,label=(\roman*)]
        \item If $\calV(I)=\alpha(I)$ and $\calV(I)-\calV(I\setminus(\{i\}\cup\delta_i))>w_i$, then $i$ is not in any MWSS of $G|_{I}$.\label{item:fund_VFA_prop_1}
        \item If~$\calV(I)=\alpha(I)$ and either $\calV(I)-\calV(I\setminus(\{i\}\cup\delta_i))>w_i$ or $\alpha(I)=\alpha(I\setminus\{i\})$,
        then $\calV(I\setminus\{i\})=\alpha(I\setminus\{i\})$.\label{item:fund_VFA_prop_2}
        \item Suppose  $\calV$ is tight.
        At the start of any iteration of~\PII~of Algorithm~\ref{alg:look ahead}, if $S$ is on an optimal trajectory, then $\calV(I) {=} \alpha(I)$ and for any MWSS $S_I$ of $G|_I$, $S\cup S_I$ is an MWSS of~$G$. \label{item:fund_VFA_prop_3}
    \end{enumerate}
\end{lemma}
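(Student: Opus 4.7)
For part (i), the plan is a direct contrapositive. Suppose some maximum stable set $S^*$ of $G|_I$ contains $i$. Then $S^*\setminus\{i\}$ is a stable set of $G|_{I\setminus(\{i\}\cup\delta_i)}$, so $\alpha(I\setminus(\{i\}\cup\delta_i))\geq \alpha(I)-w_i$. Combining with Lemma~\ref{lem:VFA_Bound} applied to $I\setminus(\{i\}\cup\delta_i)$ and the hypothesis $\calV(I)=\alpha(I)$ gives $\calV(I)-\calV(I\setminus(\{i\}\cup\delta_i))\leq w_i$, contradicting the strict inequality in the hypothesis.

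For part (ii), I would sandwich $\calV(I\setminus\{i\})$. Both subhypotheses imply $\alpha(I\setminus\{i\})=\alpha(I)$: the first via part (i), the second by assumption. Then monotonicity yields $\calV(I\setminus\{i\})\leq\calV(I)=\alpha(I)=\alpha(I\setminus\{i\})$, while Lemma~\ref{lem:VFA_Bound} gives the reverse inequality, forcing equality.

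For part (iii), I plan to prove both conclusions simultaneously by induction on iterations of \PII. For the base case, the input $x^*$ is a relative interior solution of the optimal face of \eqref{LP-P}, so $I_0:=\{i:x_i^*>0\}$ is the union of all maximum stable sets of $G$; hence $\alpha(I_0)=\alpha(N)$ and every $j\in N\setminus I_0$ satisfies $\alpha\bigl((N\setminus J)\setminus\{j\}\bigr)=\alpha(N\setminus J)$ for any $J\subseteq N\setminus I_0$. Starting from the tight identity $\calV(N)=\alpha(N)$ and peeling off $N\setminus I_0$ one vertex at a time, repeated application of part (ii) yields $\calV(I_0)=\alpha(I_0)=\alpha(G)$, and the second conclusion is immediate. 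For the inductive step, assume the invariant at the start of iteration $k$ and suppose $S^{(k+1)}$ is on an optimal trajectory. If the algorithm took the ``confirm'' branch, then $S^{(k)}=S^{(k+1)}\setminus\{i\}$ is also on an optimal trajectory, so the inductive hypothesis supplies a maximum stable set of $G|_{I^{(k)}}$ containing $i$; this forces $\alpha\bigl(I^{(k)}\setminus(\{i\}\cup\delta_i)\bigr)=\alpha(I^{(k)})-w_i$, and Lemma~\ref{lem:VFA_Bound} combined with \eqref{eq:valuefun} pins $\calV$ to $\alpha$ on $I^{(k)}\setminus(\{i\}\cup\delta_i)$. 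The while loop then removes only vertices flagged by part (i) as outside every maximum stable set of the current $I'$, so repeated use of (ii) propagates $\calV=\alpha$ throughout the loop. If the algorithm took the ``delete'' branch, then $S^{(k)}=S^{(k+1)}$, and a short argument mirroring the ``confirm'' analysis above shows that had $i$ been on an optimal trajectory, the arithmetic test $\calV(I)>\calV(I')+w_i$ would have evaluated false; hence $i$ lies in no maximum stable set of $G|_{I^{(k)}}$, $\alpha(I^{(k)}\setminus\{i\})=\alpha(I^{(k)})$, and part (ii) transfers tightness to the new $I$. In both branches the second conclusion follows from the weight accounting $\alpha(I^{(k+1)})+w(S^{(k+1)})=\alpha(G)$.

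The main obstacle is the inductive step of (iii): we must track tightness of $\calV$ through both the initial reduction $I'\gets I\setminus(\{i\}\cup\delta_i)$ and every iteration of the inner while loop, and we must show that the arithmetic test in the look-ahead precisely detects whether $i$ is on an optimal trajectory. This couples all three parts of the lemma — (i) to certify that while-loop removals are safe, (ii) to propagate $\calV=\alpha$ across those removals, and the VFA inequality \eqref{eq:valuefun} together with Lemma~\ref{lem:VFA_Bound} to force equality in the squeeze — within a nested induction on while-loop steps conducted under the conditional hypothesis that $S^{(k+1)}$ is on an optimal trajectory.
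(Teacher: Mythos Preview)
Your proposal is correct and follows essentially the same route as the paper: a contrapositive for (i), the monotonicity/Lemma~\ref{lem:VFA_Bound} sandwich for (ii), and induction on \PII\ iterations for (iii), invoking (i) and (ii) to propagate tightness through the inner while loop. Your treatment of (iii) is in fact more complete than the paper's written proof, which handles only the confirm branch of the look-ahead; your explicit delete-branch argument (that the test $\calV(I)>\calV(I')+w_i$ firing forces $i$ out of every maximum stable set of $G|_I$, after which (ii) applies) is exactly what the paper later isolates as Lemma~\ref{lem:look ahead}.
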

\begin{proof}
\begin{enumerate}[leftmargin=*,label=(\roman*)]
    \item For contradiction, suppose $i$ is in an MWSS $S^*$ of $G|_{I}$. 
    Without loss of generality, suppose $i=1$; by~\eqref{eq:valuefun},
    \[
    \calV(I)> w_1+\calV(I\setminus(\delta_{1}\cup \{1\}))\geq\ldots\geq \sum_{i\in S^*}w_i+\calV(I\setminus (\delta_{S^*}\cup S^*))=\sum_{i\in S^*}w_i+0=\alpha(I),
    \]
    which contradicts $\calV(I)=\alpha(I)$.
    
\item By the first property of this lemma and Lemma~\ref{lem:VFA_Bound},  $ \calV(I) = \alpha(I) = \alpha(I\setminus\{i\}) \leq \calV(I\setminus\{i\})$.
Since $\calV$ is monotone, $\alpha(I\setminus\{i\})=\calV(I)=\calV(I\setminus\{i\})$.
    
\item We proceed by induction.
    The base case follows from the tightness assumption. 
    Assume the set of selected vertices $S$ is a subset of an MWSS of $G$, and the remaining set of vertices $I$ satisfies $\calV({I})=\alpha({I})$. 
    Suppose $i\in {I}$ is selected in the current iteration and ${S}\cup\{i\}$ is a subset of an MWSS of $G$.
    Let us show that the statement holds for the next iteration.
    
We claim that $i$ belongs to an MWSS of $G|_{{I}}$.
    Suppose not; then  
    \[
    \alpha({I})>\alpha({I}\setminus(\{i\}\cup\delta_i))+w_i\implies
    \alpha(N)>\sum_{j\in S\cup\{i\}}w_j+\alpha({I}\setminus(\{i\}\cup\delta_i)),
    \]
    so ${S}\cup\{i\}$ is not a subset of an MWSS of $G$, a contradiction. 
    Set $I' \gets {I}\setminus(\{i\}\cup\delta_i)$ and let $S_{{I}}$ be an MWSS of $G|_{{I}}$ containing $i$.
    Then $S_{{I}}\setminus\{i\}$ is an MWSS of $I'$ and
    \[\calV(I')=\calV({I}\setminus(\{i\}\cup\delta_i))=\calV({I})-w_i=\alpha({I}\setminus(\{i\}\cup\delta_i)).\]
    After discarding all vertices $j$ with $\calV(I')-\calV(I'\setminus(\{j\}\cup\delta_j))>w_j$ and updating $I'$, by the second property of this lemma, $\calV({I}\setminus(\{i\}\cup\delta_i))=\calV(I')$,
    which implies $\calV(I')=\alpha({I}\setminus(\{i\}\cup\delta_i))$.
    Again by the first property of this lemma, $\alpha({I}\setminus(\{i\}\cup\delta_i))=\alpha(I')$, hence $\calV(I')=\alpha(I')$.
    By the induction assumption and the tightness of $\calV$, 
    \[
    \calV(N)=\sum_{j\in S}w_j+\calV({I})=\sum_{j\in S}w_j+w_i+\alpha({I}\setminus(\{i\}\cup\delta_i))=\sum_{j\in S}w_j+w_i+\alpha(I')=\alpha(N).
\]
Thus, for any MWSS $S_{I'}$ of $G|_{I'}$, $S_{I'}\cup (S\cup\{i\})$ is an MWSS of $G$. 
\end{enumerate}
\end{proof}
\begin{corollary}
\label{cor:fund_VFA_prop}
    Suppose a VFA $\calV$ is tight.
        At the beginning of any iteration of \PII~of Algorithm~\ref{alg:look ahead}, if $S$ is a subset of an MWSS of $G$, then $\sum_{i\in S}w_i+\calV(I)=\alpha(N)$. 
\end{corollary}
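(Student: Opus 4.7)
The plan is to derive this corollary as an immediate consequence of Lemma~\ref{lem:fund_VFA_prop}(iii). The key observation is that being ``on an optimal trajectory'' (the hypothesis of Lemma~\ref{lem:fund_VFA_prop}(iii)) is precisely the same condition as ``$S$ is a subset of a maximum stable set of $G$'' (the hypothesis here), so no new work needs to be done to invoke the lemma.

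First I would apply Lemma~\ref{lem:fund_VFA_prop}(iii) directly to obtain two conclusions at the beginning of the iteration in question: (a) $\calV(I) = \alpha(I)$, and (b) for any maximum stable set $S_I$ of $G|_I$, the set $S \cup S_I$ is a maximum stable set of $G$. Fix such an $S_I$; note $S$ and $S_I$ are disjoint since $S_I \subseteq I$ and $I \cap S = \emptyset$ (Algorithm~\ref{alg:look ahead} removes $i$ from $I$ whenever it adds $i$ to $S$).

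Next I would chain the equalities: since $S \cup S_I$ is a maximum stable set of $G$ and $S, S_I$ are disjoint,
\[
\alpha(N) = \sum_{j \in S \cup S_I} w_j = \sum_{j \in S} w_j + \sum_{j \in S_I} w_j = \sum_{j \in S} w_j + \alpha(I),
\]
where the last equality uses that $S_I$ is a maximum stable set of $G|_I$. Substituting (a) then yields $\alpha(N) = \sum_{j \in S} w_j + \calV(I)$, as required.

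There is no genuine obstacle here; the corollary simply repackages Lemma~\ref{lem:fund_VFA_prop}(iii) by replacing $\alpha(I)$ with $\calV(I)$ and rearranging. The only thing to be careful about is the (trivial) verification that $S$ and $S_I$ are disjoint and that $G|_I$ makes sense as the ``current'' subgraph, both of which follow directly from how Algorithm~\ref{alg:look ahead} maintains $I$ and $S$.
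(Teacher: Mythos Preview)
Your proposal is correct and matches the paper's treatment: the corollary is stated without a separate proof, as it follows immediately from Lemma~\ref{lem:fund_VFA_prop}(iii) in exactly the way you describe (indeed, the equality $\calV(N)=\sum_{j\in S}w_j+\calV(I)=\alpha(N)$ already appears verbatim inside the inductive step of that lemma's proof).
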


Lemma~\ref{lem:fund_VFA_prop} provides important properties of our algorithm. 
First, given $\calV(I)=\alpha(I)$, discarding vertices with $\calV(I) - \calV( I \setminus ( \{i\} \cup \delta_i ) ) > w_i $ preserves all MWSS of $G|_{I}$. 
When we discard a vertex that is not in any MWSS, the VFA remains equal to the stability number.
Also, if our algorithm is following an optimal trajectory at every iteration, $\calV(I)$ preserves the stability number of $G|_{I}$, and Algorithm~\ref{alg:look ahead} outputs an MWSS when it terminates.
Hence, with any VFA, our goal is to stay on an optimal trajectory. 

\subsection{VFAs from LP/SDP Relaxations}

Consider the primal-dual pair of LP relaxations \eqref{LP-P}, \eqref{LP-D}.
Given a feasible $\mu$ for~\eqref{LP-D}, we define a VFA 
\begin{equation}
\label{eq:V_LP}
\calV_{\LP}(S):=\sum_{C\in \mathcal{C}(G),C\cap S\neq\emptyset}\mu_C, \quad S\subseteq N. 
\end{equation}
The following lemmas verify that $\calV_{\LP}$, $\calV_{\SDP}$ are VFAs that are tight when the graph is perfect.
\begin{lemma}
\label{lem:V_LP_is_VFA}
    If $\mu$ is feasible for \eqref{LP-D}, $\calV_{\LP}$
    satisfies the conditions from Lemma~\ref{lem:VFAconditions},
    and hence is a VFA.
    If $\mu$ is optimal for \eqref{LP-D} and $G$ is perfect, $\calV_{\LP}$ is a tight VFA.
\end{lemma}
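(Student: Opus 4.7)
The plan is to verify the four hypotheses of Lemma~\ref{lem:VFAconditions} and then argue tightness separately via LP duality. Conditions (i) and (ii) are essentially immediate: $\calV_{\LP}(\emptyset)=0$ holds by definition, and monotonicity follows because for $I\subseteq J$ every clique that intersects $I$ also intersects $J$, so the sum defining $\calV_{\LP}(J)$ includes at least all the terms in the sum defining $\calV_{\LP}(I)$ (and $\mu\geq 0$).

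Condition (iii), $\calV_{\LP}(\{i\})\geq w_i$, is exactly the $i$-th dual feasibility constraint of \eqref{LP-D}: $\calV_{\LP}(\{i\})=\sum_{C\ni i}\mu_C\geq w_i$. Condition (iv), additivity across disjoint $I,J$ with no edge between them, is the key combinatorial observation: since every clique is a set of pairwise adjacent vertices, any clique $C$ with $C\cap I\neq\emptyset$ and $C\cap J\neq\emptyset$ would require an edge between some vertex of $I$ and some vertex of $J$, which is forbidden. Hence the set of cliques meeting $I\cup J$ partitions into those meeting only $I$ and those meeting only $J$, and the sums add. By Lemma~\ref{lem:VFAconditions}, $\calV_{\LP}$ is a VFA.

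For tightness, I would first observe that every (non-empty) clique contains at least one vertex of $N$, so every $C\in\mathcal{C}(G)$ intersects $N$. Thus
\[
\calV_{\LP}(N)=\sum_{C\in\mathcal{C}(G)}\mu_C,
\]
which is precisely the objective value of $\mu$ in \eqref{LP-D}. If $\mu$ is optimal and $G$ is perfect, then by LP duality and the tightness of the clique relaxation for perfect graphs recorded in \eqref{relation:perfect graph}, this common optimal value equals $\alpha(N)$, giving $\calV_{\LP}(N)=\alpha(N)$.

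There is no serious obstacle here; the one spot that merits a sentence of care is the verification of (iv), where it is tempting to ``overcount'' — writing the argument in terms of the partition of the cliques meeting $I\cup J$ into those meeting $I$ and those meeting $J$ (using the absence of cross-edges to preclude overlap) keeps the bookkeeping clean. One also needs the harmless convention that $\mathcal{C}(G)$ consists of non-empty cliques (or equivalently, that $\mu_\emptyset=0$) so that $\calV_{\LP}(N)$ matches the full dual objective; this is standard.
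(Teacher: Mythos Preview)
Your proof is correct and follows essentially the same approach as the paper: verify conditions (i)--(iv) of Lemma~\ref{lem:VFAconditions} directly from the definition of $\calV_{\LP}$ and dual feasibility, then invoke \eqref{relation:perfect graph} and LP duality for tightness. Your treatment of (iv) and of the identification $\calV_{\LP}(N)=\sum_{C}\mu_C$ is slightly more explicit than the paper's, but the argument is the same.
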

\begin{proof}
    We verify the four conditions from Lemma~\ref{lem:VFAconditions}.
        (i) Follows from the definition.
        (ii) Follows from the definition and the fact that $\mu \geq 0$.
        (iii) $\calV_{\LP}(i) = \sum_{C \ni i} \mu_C$,
        which is greater than or equal to $w_i$ since $\mu$ is feasible to~\eqref{LP-D}.
        (iv) If $I,J$ share no edges, a clique $C\in \mathcal{C}(G)$ can intersect at most one of $I, J$.
        Then
        $\calV_{\LP}(I \cup J) = \calV_{\LP}(I) + \calV_{\LP}(J)$ by definition.
    The tightness for perfect graphs follows from~\eqref{relation:perfect graph} and the optimality of $\mu$.
\end{proof}

For the primal-dual pair of SDP relaxations \eqref{SDP-P}, \eqref{SDP-D},
given a feasible $(t,q,Q)$ for~\eqref{SDP-D}, consider $\calV_{\SDP}$ from \eqref{eq:V_SDP}.
The following lemma provides some alternative forms of~$\calV_{\SDP}$, which are useful for analysis and computation.
\begin{lemma}
\label{lem:comp equiv}
    Let ${q} \in \R^N,{Q} \in \symm_+^{N}$, and define $\calV_{\SDP}$ with \eqref{eq:V_SDP}. 
    For $\emptyset \neq S \subset N$, 
    \begin{align*}
        \calV_{\SDP}(S)
        \quad:=\quad
        q_S^\top\; Q_S^{\dagger}\; q_S
        \quad=\quad
        \min_{t\in\R}
            \left\{ t : 
            \begin{psmallmatrix} t&q_S^\top\\ q_S&Q_S \end{psmallmatrix}
            \succeq 0 \right\}
        \quad=\quad
        -\min_{y\in\R^S}(y^\top Q_S y - 2 q_S^\top y).
    \end{align*}
\end{lemma}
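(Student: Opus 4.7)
The plan is to prove the two equalities separately. Both follow from standard results on PSD matrices and the Moore-Penrose pseudoinverse once a small range-condition check is made. Before starting, I would observe that $Q_S \succeq 0$ as a principal submatrix of $Q$, and that $q_S \in \range(Q_S)$ --- this last fact is what makes all three expressions simultaneously finite and equal. In the paper's setting, where $(t, q, Q)$ is feasible for \eqref{SDP-D}, one can obtain $q_S \in \range(Q_S)$ by extracting the principal submatrix $\begin{pmatrix} t & q_S^\top \\ q_S & Q_S \end{pmatrix}$ of the PSD block $\begin{pmatrix} t & q^\top \\ q & Q \end{pmatrix}$ and applying the Schur complement to it.

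For the first equality, I would invoke the generalized Schur complement for PSD matrices: with $Q_S \succeq 0$, the block $\begin{pmatrix} t & q_S^\top \\ q_S & Q_S \end{pmatrix}$ is PSD if and only if $q_S \in \range(Q_S)$ and $t \geq q_S^\top Q_S^\dagger q_S$. Minimizing over $t$ subject to these constraints immediately yields $q_S^\top Q_S^\dagger q_S$.

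For the second equality, I would treat $f(y) := y^\top Q_S y - 2 q_S^\top y$ as a convex quadratic (convexity from $Q_S \succeq 0$). The first-order optimality condition $\nabla f(y) = 2 Q_S y - 2 q_S = 0$ is the normal equation $Q_S y = q_S$, which admits the solution $y^* = Q_S^\dagger q_S$ because $q_S \in \range(Q_S)$. Substituting gives $f(y^*) = (y^*)^\top (Q_S y^* - 2 q_S) = -(y^*)^\top q_S = -q_S^\top Q_S^\dagger q_S$, so $-\min_y f(y) = q_S^\top Q_S^\dagger q_S$. The only delicate piece in the whole argument is the range condition $q_S \in \range(Q_S)$; everything else is textbook convex quadratic optimization and Schur complement manipulation, so I do not expect a genuine obstacle.
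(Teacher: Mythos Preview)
Your proposal is correct and follows essentially the same route as the paper. Both arguments hinge on the range condition $q_S \in \range(Q_S)$ and then use Schur-complement reasoning for the SDP characterization and first-order optimality for the quadratic; the only cosmetic difference is that you invoke the generalized Schur complement as a black box (PSD $\iff$ $q_S \in \range(Q_S)$ and $t \ge q_S^\top Q_S^\dagger q_S$), whereas the paper rewrites the PSD constraint as $t\,Q_S \succeq q_S q_S^\top$ and then verifies feasibility and tightness of $t^* = q_S^\top Q_S^\dagger q_S$ by hand via an SVD diagonalization and Cauchy--Schwarz. Your version is shorter; the paper's is more self-contained.
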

\begin{proof}
    We first consider the SDP formulation.
    By taking the Schur complement, the PSD constraint is equivalent to $t\geq 0, tQ_S\succeq q_{S}q_S^\top$. 
    Therefore, the SDP is feasible if and only if $Q_S\succeq 0$ and $q_S$ is in the range of $Q_S$ by~\cite[Theorem 1]{pseudo-inv}.
    Assume that this is the case, and we show $t^*:=q_S^\top  Q_S^{\dagger} q_S$ is feasible, i.e., $t^*Q_S\succeq {q_S}q_S^\top$.
    By applying singular value decomposition and changing the basis, we may assume $Q_S=\Diag(a_1,\ldots,a_r,0,\ldots,0)$ with $a_i>0$ and ${q_S}=(b_1,\ldots,b_r,0,\ldots,0)$ with $b_i\geq 0$, so ${q_S}$ lies in the range of $Q_S$. 
    Then $Q_S^\dagger=\Diag(a_1^{-1},\ldots,a_r^{-1},0,\ldots,0)$. For any vector $x\in\R^{n}$, we have,
    \begin{align*}
        x^\top(t^*Q_S-{q_S}q_S^\top)x=(q_S^\top Q_S^\dagger {q_S})(x^\top Q_Sx)-(q_S^\top x)^2
        =\biggl(\sum_{i=1}^{r}b_i^2 a_i^{-1}\biggr)\biggl(\sum_{i=1}^{r}x_i^2 a_i\biggr)-\biggl(\sum_{i=1}^r b_ix_i\biggr)^2,
    \end{align*}
    which is nonnegative by Cauchy-Schwarz inequality. Hence, $t^*$ is feasible. 
    Moreover, when $x=Q_S^\dagger q_S$, the above inequality becomes equality, so $t^*$ is the optimum. 
    Hence, $
        q_S^\top  Q_S^{\dagger} q_S
        =
        \min_{t\in\R}
            \left\{ t : 
            \begin{psmallmatrix} t&q_S^\top\\ q_S&Q_S \end{psmallmatrix}
            \succeq 0 \right\}.$
    
    For the other equivalence, since $Q_S$ is PSD, the optimum is obtained at $y$ for $ Q_S y=q_S $,
    which is satisfied when $y=Q_S^\dagger q_S$, as we show above. 
    Then
    $y^\top Q_S y-2q_S^\top y=-q_S^\top Q^\dagger q_S$,
    which implies $-\min_{y}(y^\top Q_S y-2q_S^\top y)=q_S^\top Q_S^\dagger q_S$.
\end{proof}

We proceed to show that $\calV_{\SDP}$ is a VFA.
\begin{lemma}
\label{lem:VFA prop}
If $(t,q,Q)$ is feasible for~\eqref{SDP-D},  $\calV_{\SDP}$ is a VFA.
    If $(t,q,Q)$ is optimal for~\eqref{SDP-D} and $G$ is perfect, $\calV_{\SDP}$ is a tight VFA.
\end{lemma}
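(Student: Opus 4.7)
The plan is to verify the four sufficient conditions of Lemma~\ref{lem:VFAconditions} for $\calV_{\SDP}$, leveraging the three equivalent expressions in Lemma~\ref{lem:comp equiv}, and then derive tightness from SDP duality together with \eqref{relation:perfect graph}.

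Conditions (i) and (iv) are the easiest. Condition (i), $\calV_{\SDP}(\emptyset) = 0$, holds by definition. For condition (iv), observe that the dual constraint $Q_{ij} = 0$ for every $ij \notin E$ implies that when $I, J \subseteq N$ are disjoint with no edge between them, the principal submatrix $Q_{I \cup J}$ is block diagonal with blocks $Q_I$ and $Q_J$. The Moore–Penrose pseudo-inverse respects this block structure, so $q_{I \cup J}^\top Q_{I \cup J}^\dagger q_{I \cup J}$ splits as $q_I^\top Q_I^\dagger q_I + q_J^\top Q_J^\dagger q_J$, which is additivity. For monotonicity (ii), I would use the SDP characterization from Lemma~\ref{lem:comp equiv}: if $I \subseteq J$ and $t^* := \calV_{\SDP}(J)$, the optimal $\begin{pmatrix} t^* & q_J^\top \\ q_J & Q_J \end{pmatrix} \succeq 0$ has the principal submatrix $\begin{pmatrix} t^* & q_I^\top \\ q_I & Q_I \end{pmatrix} \succeq 0$, so $t^*$ is feasible for the SDP computing $\calV_{\SDP}(I)$, yielding $\calV_{\SDP}(I) \leq \calV_{\SDP}(J)$.

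Condition (iii), $\calV_{\SDP}(\{i\}) \geq w_i$, requires a brief case analysis. The $2 \times 2$ principal submatrix $\begin{pmatrix} t & q_i \\ q_i & Q_{ii} \end{pmatrix}$ of the PSD dual matrix is PSD, so $Q_{ii} \geq 0$; and if $Q_{ii} = 0$ then the $2 \times 2$ PSD condition forces $q_i = 0$, which combined with the dual constraint $Q_{ii} = -2q_i - w_i$ yields $w_i = 0$, contradicting $w_i > 0$. Hence $Q_{ii} > 0$ and $\calV_{\SDP}(\{i\}) = q_i^2/Q_{ii}$; substituting $Q_{ii} = -2q_i - w_i$, the inequality $\calV_{\SDP}(\{i\}) \geq w_i$ rearranges to the identity $(q_i + w_i)^2 \geq 0$.

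For tightness when $G$ is perfect and $(t,q,Q)$ is optimal, the SDP form in Lemma~\ref{lem:comp equiv} combined with feasibility of $(t,q,Q)$ gives $\calV_{\SDP}(N) \leq t$, while Lemma~\ref{lem:VFA_Bound} (applied to the VFA just established) gives $\calV_{\SDP}(N) \geq \alpha(N)$. Strong duality plus \eqref{relation:perfect graph} yield $\alpha(N) = \vartheta(G;w) = t$ for perfect $G$, pinching both bounds to equality. The only delicate step I anticipate is the case analysis in (iii) to rule out $Q_{ii} = 0$; the rest is routine use of principal-submatrix and block-diagonal properties of PSD matrices and their pseudo-inverses.
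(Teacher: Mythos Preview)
Your proposal is correct and follows essentially the same approach as the paper: verify conditions (i)--(iv) of Lemma~\ref{lem:VFAconditions} using, respectively, the definition, the principal-submatrix property of PSD matrices via the SDP characterization in Lemma~\ref{lem:comp equiv}, the block-diagonal structure of $Q_{I\cup J}$ when $I,J$ share no edges, and a direct algebraic check on $q_i^2/Q_{ii}$ using $Q_{ii}=-2q_i-w_i$. The only cosmetic difference is that the paper handles $Q_{ii}=0$ as a degenerate case yielding $w_i=0$, whereas you (correctly, given $w\in\R_{++}^N$) rule it out by contradiction; your square $(q_i+w_i)^2\geq 0$ and the paper's $(Q_{ii}-w_i)^2\geq 0$ are the same identity up to the substitution $Q_{ii}-w_i=-2(q_i+w_i)$.
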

\begin{proof}
    We proceed to verify the four conditions from Lemma~\ref{lem:VFAconditions}.
    \begin{enumerate}[leftmargin=*,label=(\roman*)]
        \item Follows from the definition of $\calV_{\SDP}$.
        \item Let $I\subset J$ and let $t_J = \calV_{\SDP}(J)$.
        By the previous lemma, we have that $\begin{psmallmatrix}
            t_J&{q}_J^\top\\
            {q}_J&{Q}_J
        \end{psmallmatrix}\succeq 0$. 
        Since a principal sub-matrix of a PSD matrix is also PSD, we have $\begin{psmallmatrix}
            t_J &{q}_I^\top\\
            {q}_I&{Q}_I
        \end{psmallmatrix}\succeq 0$, and hence $\calV_{\SDP}(I)\leq t_J=\calV_{\SDP}(J)$ by Lemma~\ref{lem:comp equiv}. 
        \item Suppose there is no edge between $I,J$; then the matrix ${Q}_{I\cup J}$ is a block diagonal matrix, i.e. ${Q}_{I\cup J}=\Diag({Q}_I,{Q}_J)$. 
        Then its pseudo-inverse is also block diagonal, ${Q}^\dagger_{I\cup J}=\Diag({Q}^\dagger_{I},{Q}^\dagger_{J})$. 
        Hence,
        \[\calV_{\SDP}(I\cup J)={q}^\top_{I\cup J}{Q}^\dagger_{I\cup J}{q}_{I\cup J}={q}_I^\top {Q}^\dagger_{I}{q}_I+{q}_J^\top {Q}^\dagger_{J}{q}_J=\calV_{\SDP}(I)+\calV_{\SDP}(J).\]
        \item We have ${Q}_{ii}=-2{q}_i-w_i$, by feasibility in \eqref{SDP-D}. If ${Q}_{ii}=0$, then again by feasibility, ${q}_i=0$, $w_i=-2{q}_i-{Q}_{ii}=0$, and $\calV_{\SDP}(i)=0\geq w_i=0$.
        If $Q_{ii}>0$, then by the definition,
        \[
            \calV_{\SDP}(i):={Q}_{ii}^{-1}{q}_i^2=\frac{1}{4}{Q}_{ii}^{-1}({Q}_{ii}+w_i)^2\implies
            {Q}_{ii}(\calV_{\SDP}(i)-w_i)=\frac{1}{4}({Q}_{ii}-w_i)^2\geq 0,
        \]
        which implies $\calV_{\SDP}(i)\geq w_i$.
    \end{enumerate}
    The tightness follows from~\eqref{relation:perfect graph} and the optimality of $(t,q,Q)$.
 \end{proof}

\subsection{Statement of main results}

Our main contribution is to show that Algorithm~\ref{alg:look ahead}, applied with either $\calV_{\LP}$ or $\calV_{\SDP}$, finds an MWSS for several important subclasses of perfect graphs. 
In Section~\ref{sec:LPVFA}, we provide a combinatorial interpretation of Algorithm~\ref{alg:look ahead} with $\calV_{\LP}$, to prove the optimality of the returned stable set.
Subsequently, in Section~\ref{sec:SDPVFA} we analyze Algorithm~\ref{alg:look ahead} with $\calV_{\SDP}$, and give a modification that efficiently outputs all MWSS for unipolar, chordal and co-chordal graphs.

\begin{definition}
\label{defn:subclasses_of_perfect}
A graph $G$ is \emph{unipolar} if its vertex set $N$ can be partitioned as $N = A \cup \overline{{A}}$, such that the graph $G|_A$, called the \emph{center}, is complete, and the connected components of $G|_{\overline{A}}$, called \emph{clusters}, are also complete.
A graph is \emph{co-unipolar} if its complement is unipolar.
A \emph{generalized split graph} is a graph that is either unipolar or co-unipolar. 

A graph is \emph{chordal} if it does not contain induced cycles of length at least four.  
A graph is \emph{co-chordal} if its complement is chordal.
\end{definition}
%
Almost all perfect graphs are generalized split graphs in an asymptotic sense \cite{Prömel_Steger_1992}. 
We now state the main results: Algorithm~\ref{alg:look ahead} returns an MWSS for the subclasses of perfect graphs in Definition~\ref{defn:subclasses_of_perfect},  while~\PII~may need to be run twice only for co-unipolar graphs.
The proofs are in Subsection~\ref{subsec:look-ahead} and Section~\ref{sec:SDPVFA}, respectively.
\begin{theorem}
\label{thm:main_LP}
    Let $x^*$, $\mu^*$ respectively be optimal solutions of \eqref{LP-P} and \eqref{LP-D}, 
    both in the relative interior of the optimal face of their respective feasible regions,
    and let $\calV_{\LP}$ be the VFA constructed from $\mu^*$ via \eqref{eq:V_LP}.
    Consider Algorithm~\ref{alg:look ahead} executed with $\calV_{\LP}$.
    \begin{enumerate}[leftmargin=*,label=(\alph*)]
        \item\label{thm:main:unipolar}  If $G$ is unipolar, chordal, or co-chordal, then Algorithm~\ref{alg:look ahead} returns an MWSS.
        \item\label{thm:main:counipolar}
        Assume that $G$ is co-unipolar.
        Let $i,j$ be any adjacent vertices remaining after \PI.
        Then either
        Algorithm~\ref{alg:look ahead} returns an MWSS when \PII starts by selecting~$i$
        or it returns an MWSS when \PII starts by selecting~$j$.
    \end{enumerate}
\end{theorem}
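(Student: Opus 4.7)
The plan is to verify that for each subclass of graphs in Theorem~\ref{thm:main_LP}, Algorithm~\ref{alg:look ahead} stays on an optimal trajectory throughout \PII; correctness then follows from Lemma~\ref{lem:fund_VFA_prop}(iii) together with the tightness in Lemma~\ref{lem:V_LP_is_VFA}. Two structural facts are the engine of the argument. First, since $\calV_{\LP}(I) = \sum_{C \cap I \neq \emptyset} \mu^*_C$, one has
\[
\calV_{\LP}(I) - \calV_{\LP}\bigl(I \setminus (\{i\} \cup \delta_i)\bigr)
= \sum_{\substack{C \cap I \subseteq \{i\} \cup \delta_i \\ C \cap (\{i\} \cup \delta_i) \neq \emptyset}} \mu^*_C,
\]
so the pruning test in line~\ref{alg_step:discard} and the look-ahead in line~\ref{alg_step:lookahead start} have clean clique-cover interpretations. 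Second, since $x^*$ and $\mu^*$ lie in the relative interiors of their optimal faces, complementary slackness gives $\{i : x^*_i > 0\} = \bigcup\{S^* : S^*\text{ a maximum stable set}\}$, and the support of $\mu^*$ is the intersection over all dual optima, so every clique $C$ with $\mu^*_C > 0$ is tight for every maximum stable set. After \PI{} the surviving set $I$ is therefore exactly the union of all optimal stable sets.

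For part (a) I would handle each class separately. In the unipolar case the decomposition $N = A \cup \overline{A}$ forces any maximum stable set to pick at most one center vertex and at most one vertex per cluster; combined with the surviving-set characterization, after \PI{} every cluster vertex lies in some optimum and $\mu^*$ can be taken supported on cliques aligned with the center and clusters. An arbitrary selection of $i \in I$ in line~\ref{alg_step:select} prunes correctly by Lemma~\ref{lem:fund_VFA_prop}(i)--(ii), and the look-ahead either confirms $i$ or deletes it precisely when a same-weight optimum exists without $i$; induction on $|I|$ closes the case. For chordal $G$ I would use a perfect elimination ordering, argue that $\mu^*$ can be localized to the maximal cliques of the clique tree, and show that each iteration of \PII{} effectively peels off a simplicial vertex of $I$ whose clique neighborhood is a single $\mu^*$-positive clique. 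The co-chordal case then reduces to reasoning about clique partitions of $G$ itself, again via the clique-tree structure in the complement.

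The main obstacle, and the reason part (b) requires two attempts, is the co-unipolar case. Here $G$ contains a large stable set $A$ plus a complete multipartite subgraph on $\overline{A}$, and a maximum stable set either extends a subset of $A$ with compatible vertices from a single multipartite part, or swaps elements of $A$ out for an entire part of $\overline{A}$. The dual $\mu^*$ places weight on cliques straddling both sides, so a single first pick may commit to the wrong side in a way that line~\ref{alg_step:discard} cannot detect locally. I propose to show: for any adjacent survivors $i, j$ of \PI, at least one of ``start with $i$'' or ``start with $j$'' initiates an optimal trajectory, by an exchange argument in which, if both initial choices produced suboptimal trajectories, one would construct two maximum stable sets whose combination contradicts the relative-interior support structure of $(x^*, \mu^*)$. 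The delicate point will be confirming that once the first correct selection is accepted, the residual graph is again tractable in the unipolar or chordal style, so the rest of the run inherits the optimal trajectory established at the start.
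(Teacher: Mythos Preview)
Your framework is sound, but the co-unipolar argument in part~(b) has a genuine gap, and your chordal/co-chordal approaches are swapped relative to the paper.

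For part~(b), your proposed exchange argument misses the structural fact that makes the case easy. In a co-unipolar graph the center $A$ is a \emph{stable} set (since $\overline{G}|_A$ is complete), so for any two adjacent survivors $i,j$ of \PI, at least one of them must lie in some part $B_\ell$ of the complete multipartite piece $\overline{A}$. The paper's argument is then immediate: selecting such a vertex deletes all other parts $B_1,\dots,B_{\ell-1},B_{\ell+1},\dots,B_k$ together with its neighbors in $A$, leaving only vertices in the two stable sets $B_\ell\setminus\{i\}$ and $A\setminus\delta_i$. The residual graph is therefore bipartite, hence house-free, and Proposition~\ref{prop:house_structure} guarantees the remainder of \PII\ succeeds without even invoking the look-ahead. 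No exchange argument or contradiction with the relative-interior structure of $(x^*,\mu^*)$ is needed; the two-attempt clause is there only because the algorithm may first pick a center vertex, whose adjacent partner is then guaranteed to be a cluster vertex.

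For part~(a), you have the tools reversed. The paper dispatches \emph{chordal} graphs in one line by observing that a house contains an induced $4$-cycle, so chordal graphs are house-free and Proposition~\ref{prop:house_structure} applies directly; no PEO or clique-tree machinery is used. The perfect elimination ordering is instead the device for \emph{co-chordal} graphs: taking $v$ first in a PEO of $\overline{G|_I}$ makes $I\setminus(\{v\}\cup\delta_v)$ a clique in $\overline{G}$ and hence a stable set in $G$, so no further discarding occurs after selecting $v$ and $\calV_{\LP}(I)=\calV_{\LP}(I')+w_v$. Finally, the unipolar case is where the real work lies: the paper proves a dedicated lemma (Lemma~\ref{lem:unipolar}) establishing that in every iteration some $u\in I$ satisfies $\calV_{\LP}(I)=\calV_{\LP}(I')+w_u$, via a case analysis on whether $u$ lies in the center $A_I$ or a cluster, tracking how essential cliques of $G$ restrict to $G|_I$. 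Your induction-on-$|I|$ sketch does not supply this analysis.
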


\begin{theorem}
\label{thm:main_SDP}
    Let $(x^*,X^*)$, $(t^*,q^*,Q^*)$ respectively be optimal solutions of \eqref{SDP-P} and \eqref{SDP-D}, both in the relative interior of the optimal face of their respective feasible regions, and let $\calV_{\SDP}$ be the VFA constructed from $(t^*,q^*,Q^*)$ via \eqref{eq:V_SDP}. 
    Then Algorithm~\ref{alg:look ahead} executed with $\calV_{\SDP}$ returns an MWSS under the same assumptions as in Theorem~\ref{thm:main_LP}.
\end{theorem}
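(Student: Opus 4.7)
The plan is to follow the skeleton of Theorem~\ref{thm:main_LP} and reduce everything to one strict-inequality property of $\calV_{\SDP}$, which I would establish using strict complementarity of the relative interior primal-dual pair. By Lemma~\ref{lem:fund_VFA_prop}, it suffices to show that at every iteration of \PII, if $\calV_{\SDP}(I)=\alpha(I)$ and $j\in I$ lies in no maximum stable set of $G|_I$, then
\[
\calV_{\SDP}(I) - \calV_{\SDP}\bigl(I \setminus (\{j\}\cup\delta_j)\bigr) > w_j.
\]
Granting this, parts (i) and (ii) of Lemma~\ref{lem:fund_VFA_prop} preserve $\calV_{\SDP}(I)=\alpha(I)$ across the inner discard loop, and the look-ahead in lines 10--13 correctly rejects any suboptimal initial selection, so by part (iii) the algorithm stays on an optimal trajectory until termination. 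The \PI\ step is immediate: on a perfect graph, \eqref{relation:perfect graph} gives $\STAB(G)=\thbody(G)$, so the optimal faces of \eqref{SDP-P} and \eqref{LP-P} coincide after projection to $x$-coordinates; since relative interior solutions are fully supported on their optimal face, $\{i : x_i^* > 0\}$ is the same whether taken from \eqref{SDP-P} or \eqref{LP-P}, so \PI\ discards exactly the same vertices as in Theorem~\ref{thm:main_LP}.

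For the core strict inequality, the main tool would be strict complementarity at the relative interior. Writing $M^* = \begin{pmatrix}1 & x^{*\top}\\ x^* & X^*\end{pmatrix}$ and $N^* = \begin{pmatrix}t^* & q^{*\top}\\ q^* & Q^*\end{pmatrix}$, we have $M^* N^* = 0$ with maximum combined rank, hence $\range(M^*) = \ker(N^*)$. On a perfect graph the projection of the optimal face of \eqref{SDP-P} onto $x$ is $\conv\{\mathbf{1}_{S^*} : S^* \text{ a maximum stable set}\}$, so at a relative interior solution $X^* = \sum_{S^*} \lambda_{S^*}\, \mathbf{1}_{S^*}\mathbf{1}_{S^*}^\top$ with all $\lambda_{S^*}>0$, and the columns of $M^*$ span the augmented indicator vectors of all maximum stable sets. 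Combined with the quadratic-form identity $\calV_{\SDP}(S) = -\min_y(y^\top Q_S y - 2q_S^\top y)$ from Lemma~\ref{lem:comp equiv} and the block identity $Q^*x^* + q^* = 0$ extracted from $M^* N^* = 0$, this lets me rephrase the strict inequality as a statement about how the quadratic form $y^\top Q_S y - 2q_S^\top y$ improves when the block of rows and columns indexed by $\{j\}\cup\delta_j$ is removed. I would then exhibit, for each graph class, an explicit witness $y$ supported on $I\setminus(\{j\}\cup\delta_j)$ that achieves the strict improvement, built from a maximum stable set of $G|_I$ that avoids $j$ (which exists by assumption on $j$) and the algebraic data in $\range(M^*)$.

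The main obstacle is constructing these witnesses class by class. For unipolar graphs and chordal graphs, the center/cluster partition and perfect elimination orderings give enough structure that the witness $y$ should assemble from the indicator vector of a nearby maximum stable set via the generic relation $Q^* x^* + q^* = 0$, closely mirroring the LP analysis with $\mu_C^*$ replaced by algebraic quantities extracted from $Q^*$. For co-chordal and co-unipolar graphs, cliques overlap in more intricate ways and the algebraic translation is less automatic; here I would adapt the clique-based argument behind Theorem~\ref{thm:main_LP} by using $\range(M^*)$ to locate the relevant maximum stable sets algebraically, which I expect to be the most technically involved step. Finally, the co-unipolar restart subtlety in Theorem~\ref{thm:main_LP}(b) transfers verbatim, since it depends only on which of the two adjacent starting vertices lies on an optimal trajectory, a property determined by $x^*$ and therefore common to the LP and SDP analyses.
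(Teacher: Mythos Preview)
Your sufficient condition is stronger than the theorem and is in fact equivalent to the conjecture the paper explicitly leaves open: that look-ahead is unnecessary for these classes. If, whenever $\calV_{\SDP}(I)=\alpha(I)$ and $j$ is suboptimal in $G|_I$, the strict inequality held, then the inner discard loop would always purge every suboptimal vertex, every subsequent selection in line~\ref{alg_step:select} would automatically be optimal, and the look-ahead test would never fire. Example~\ref{ex:house-ct} shows this property fails for $\calV_{\SDP}$ on a perfect graph just outside the covered classes, so any argument for it must lean heavily on class-specific structure; your sketch does not supply that structure, and the strict-complementarity setup you describe is class-agnostic. There is also a direction problem in your witness idea: from $\calV_{\SDP}(S)=-\min_y(y^\top Q_S y-2q_S^\top y)$, a vector $y$ supported on $I\setminus(\{j\}\cup\delta_j)$ yields a \emph{lower} bound on $\calV_{\SDP}(I\setminus(\{j\}\cup\delta_j))$, whereas the inequality you need requires an \emph{upper} bound on that quantity.

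The paper's route is entirely different and avoids both issues. It never argues that $\calV_{\SDP}$ detects all suboptimal vertices, only that it detects every vertex $\calV_{\LP}$ detects. This is the comparison Lemma~\ref{thm:SDP_beat_LP2}: on an optimal trajectory, $\calV_{\LP}(I)>\calV_{\LP}(J)$ implies $\calV_{\SDP}(I)>\calV_{\SDP}(J)$. Its proof builds an explicit SDP optimum from the LP dual (Lemma~\ref{lem:LPDtoSDPD}), uses the relative-interior assumption to get range containment inside $\range\begin{pmatrix}t^*&\bar q^\top\\\bar q&\bar Q\end{pmatrix}$, and derives a contradiction via the block-matrix characterization of Lemma~\ref{lem:row-dep}. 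Once this comparison holds, any run of Algorithm~\ref{alg:look ahead} with $\calV_{\SDP}$ can be simulated by a run with $\calV_{\LP}$ (Theorem~\ref{thm:SDP_beat_LP}), and Theorem~\ref{thm:main_LP} finishes the argument; no class-by-class SDP analysis is needed.
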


\begin{corollary}
\label{cor:Time Complexity}
Algorithm \ref{alg:look ahead} executes $\mathcal{O}(n^2)$ VFA evaluations.
Therefore, under the conditions of Theorem~\ref{thm:main_SDP}, it returns an MWSS in polynomial time.
\end{corollary}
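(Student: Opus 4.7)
The plan is to bound the total number of VFA evaluations performed by Algorithm~\ref{alg:look ahead} by carefully accounting for the three places where $\calV$ is invoked: the inner \texttt{while} loop that scans for discardable vertices, and the two look-ahead comparisons. Once the $\mathcal{O}(n^3)$ bound is established, polynomial running time under the hypotheses of Theorem~\ref{thm:main_SDP} will follow from (a) polynomial-time solvability of~\eqref{SDP-D} via interior point methods, and (b) polynomial-time computability of each evaluation of $\calV_{\SDP}$, both of which are already in hand.

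First I would bound the outer loop of~\PII. At each iteration, either a vertex is removed from $I$ in the backtrack branch or a vertex is added to $S$ and its closed neighborhood is discarded from $I$; in both cases $|I|$ strictly decreases, so the outer loop runs at most $n$ times. Next I would bound the inner \texttt{while} loop. Each execution of the condition check costs one evaluation of $\calV(I'\setminus(\{j\}\cup\delta_j))$ (the value $\calV(I')$ can be cached between checks). In the worst case, after each discard we may need to rescan the remaining vertices, giving at most $|I'|$ checks per scan and at most $|I'|$ discards, for a total of $\mathcal{O}(n^2)$ evaluations per outer iteration. The two look-ahead comparisons contribute only $\mathcal{O}(1)$ evaluations per outer iteration. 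Multiplying by the $\mathcal{O}(n)$ outer iterations yields the $\mathcal{O}(n^3)$ bound claimed.

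For the polynomial time claim, I would appeal to Lemma~\ref{lem:comp equiv}, which expresses $\calV_{\SDP}(S)$ as $q_S^\top Q_S^\dagger q_S$. Computing a Moore--Penrose pseudoinverse of an $|S|\times|S|$ matrix is polynomial (e.g., via singular value decomposition), so each of the $\mathcal{O}(n^3)$ VFA evaluations takes polynomial time. Solving \eqref{SDP-D} once to obtain the relative interior solution $(t^*, q^*, Q^*)$ is likewise polynomial by interior point methods, as noted in Section~\ref{sec:preliminaries}. Since Theorem~\ref{thm:main_SDP} guarantees the output is a maximum stable set under the stated hypotheses, combining these facts yields the corollary.

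The main obstacle is really the inner-loop accounting: one needs to be careful that the nested \texttt{while} may reprocess vertices after each discard, and to confirm that an $\mathcal{O}(n^2)$ per-outer-iteration bound is both correct and not easily improved without stronger structural assumptions. Everything else (polynomial evaluation of $\calV_{\SDP}$, polynomial solution of \eqref{SDP-D}) is either standard or already proved in the excerpt.
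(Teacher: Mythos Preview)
Your proposal is correct and follows essentially the same approach as the paper: bound the number of outer iterations of \PII\ and the VFA evaluations within each. Your accounting ($\mathcal{O}(n)$ outer iterations, each with $\mathcal{O}(n^2)$ inner-loop evaluations due to possible rescanning after each discard) is in fact more transparent than the paper's own proof, which factors the same $\mathcal{O}(n^3)$ bound as $\mathcal{O}(n^2)$ ``iterations'' with $\mathcal{O}(n)$ evaluations each; both arrive at the same total and the polynomial-time conclusion follows identically from Lemma~\ref{lem:comp equiv} and polynomial-time solvability of~\eqref{SDP-D}.
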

\begin{proof}
In every iteration, $\mathcal{O}(n)$ VFA evaluations are required. 
    In the worst case, each iteration requires a look-ahead, and 
    there can be at most $n$ look-ahead steps.
    So there are at most $\mathcal{O}(n)$ iterations, and the overall number of VFA evaluations is $\mathcal{O}(n^2)$.  
\end{proof}

Above we account for the number of VFA evaluations, and distinguish this complexity from the SDP solve time and the complexity of a single VFA evaluation, as the latter two depend on user choice. 
In particular, for computing the VFA, Lemma \ref{lem:comp equiv} gives three equivalent forms for $\calV_{\SDP}$ that result in different complexities. For example, computing a pseudo-inverse requires $\mathcal{O}(n^3)$ time. 
In practice, we observe that applying gradient descent to the quadratic programming form is more efficient, but not as numerically stable.

The assumption that the optimal dual solution is in the relative interior of the optimal face cannot be relaxed; the algorithm may return a suboptimal stable set otherwise. 
Furthermore, the look-ahead is also necessary to guarantee the algorithm's success.
In Section~\ref{sec:LPVFA}, we discuss how Algorithm~\ref{alg:look ahead} may fail on the graph in Figure \ref{fig:c-fig-0} when either of the assumptions fails.

\section{Analysis via the LP VFA}\label{sec:LPVFA}

In this section, we analyze Algorithm~\ref{alg:look ahead} with $\calV_{\LP}$. 
We first give a combinatorial interpretation, which provides intuition about how and why a vertex is discarded.
Then we discuss why the look-ahead is necessary, and provide an example.
After that, we prove Algorithm~\ref{alg:look ahead} with $\calV_{\LP}$ returns an MWSS for generalized split, chordal, and co-chordal graphs.
Finally, we give another example showing that even with look-ahead, Algorithm~\ref{alg:look ahead} may fail to return an optimal set on some perfect graphs.

In this section, for an iteration of \PII, we let $S$ denote the set of selected vertices, $I$ denote the set of remaining vertices at the beginning of the iteration, and $I'$ be the set of vertices before step~\ref{alg_step:lookahead start}.

\subsection{Combinatorial Interpretation}
\label{sec:select and discard}

Let $G=(N,E)$ be a perfect graph. 
By~\eqref{relation:perfect graph}, one can obtain the incidence vector of an MWSS by solving for an optimal extreme point of~\eqref{LP-P}. 
The number of cliques in $G$ may grow exponentially, so it is expensive to solve such LPs directly.
Nonetheless, we can still extract a useful combinatorial interpretation.

Let $(x^*,\mu^*)$ be a pair of strictly complementary optimal solutions of~\eqref{LP-P} and~\eqref{LP-D}.
Note that $x^*\in\STAB(G)$ since the graph is perfect. 
Furthermore, strict complementarity for the LP is equivalent to being in the relative interior of the optimal face.
Therefore,
\begin{align*}
\sum_{C\ni i}\mu_{C}^{*}>w_i
\iff
x_i^*=0
\iff
&i\text{ is not in any MWSS of }G,
\\ 
\mu^*_C>0
\iff
\sum_{i\in C}x^*_i=1
\iff
&\text{ every MWSS of }G\text{ contains a vertex in }C.
\end{align*}
Recall that \PI discards all vertices $i$ with $x_i^* = 0$.
By the first equivalence above, it preserves all MWSS, ensuring that each remaining vertex belongs to at least one MWSS.
The second equivalence motivates the following definition.
\begin{definition}
    \label{defn:ess clique}
Given a graph $G$, let $\mathcal{C}(G)$ be the set of its cliques. 
For $C\in\mathcal{C}(G)$, if every MWSS of $G$ contains a vertex in $C$, we say $C$ is an \emph{essential clique} of $G$ or $C$ is \emph{essential}. In addition, if every vertex in $C$ belongs to an MWSS of $G$, 
$C$ is \emph{strictly essential}.
\end{definition}

Combining the two equivalences above, we provide a combinatorial interpretation of~\eqref{eq:valuefun} in terms of essential cliques and MWSS.
For $I\subseteq N$, we have 
\begin{align*}
    \calV_{\LP}(I)-\calV_{\LP}(I\setminus(\{i\}\cup\delta_i))&=
    [\calV_{\LP}(I\setminus\delta_i)-\calV_{\LP}(I\setminus(\{i\}\cup\delta_i))]
    +[\calV_{\LP}(I)- \calV_{\LP}(I\setminus\delta_i)]
    \\
    &=
    \sum_{C\ni i}\mu^*_C
    + \sum_{C\subseteq \delta_i \cup (N \setminus I), C \cap I \cap \delta_i \neq \emptyset}\mu^*_C
    \geq w_i + 0.
\end{align*}
Moreover, for every $I\subseteq N$,
\begin{align*}
    \calV_{\LP}(I)-\calV_{\LP}(I\setminus(\{i\}\cup\delta_i))&=w_i
    \iff
    \begin{matrix}
    i\text{ belongs to some MWSS of }G\text{ and }\\
    C\text{ is not essential }\forall C \subseteq \delta_i \cup (N \setminus I), C \cap I \cap \delta_i \neq \emptyset.
    \end{matrix}
\end{align*}
Notice that the equivalences only provide direct links to the MWSS of $G$ but not necessaryly $G|_I$.
In particular, there may exist $i,I$ with $\calV_{\LP}(I)-\calV_{\LP}(I\setminus(\{i\}\cup\delta_i)) 
= w_i$, but where $i$ is not in any MWSS of $G \vert_I $. 
Next, we characterize how essential cliques translate to induced subgraphs.

\begin{lemma}
\label{lem:ess_clique_shrink}
Let $I\subsetneq N$.
Suppose Algorithm~\ref{alg:look ahead} is on an optimal trajectory
and reaches $G|_I$ during some iteration.
For every essential clique $C$ of $G$, $C\cap I$ is an essential clique of $G|_I$ if it is non-empty.
\end{lemma}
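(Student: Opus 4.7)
The plan is to unpack essentiality using Lemma~\ref{lem:fund_VFA_prop}(\ref{item:fund_VFA_prop_3}) directly. Let $S$ denote the set of vertices selected by Algorithm~\ref{alg:look ahead} when it reaches $G|_I$. Being on an optimal trajectory means that $S \cup S_I$ is a maximum stable set of $G$ for every maximum stable set $S_I$ of $G|_I$. That $C \cap I$ is a clique is immediate because cliques are closed under taking induced subgraphs, so the only nontrivial task is to show that every maximum stable set of $G|_I$ contains at least one vertex from $C \cap I$.

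First I would fix an arbitrary maximum stable set $S_I$ of $G|_I$. By the optimal-trajectory property above, $S \cup S_I$ is a maximum stable set of $G$, so essentiality of $C$ in $G$ produces some vertex $v \in C \cap (S \cup S_I)$. Then I would split into two cases depending on whether $v$ lies in $S$ or in $S_I$, with the goal of ruling out $v \in S$ so that the remaining case $v \in S_I \subseteq I$ yields $v \in C \cap I$, which is exactly what we need.

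The core observation, and the only real content of the proof, is the following structural claim about Algorithm~\ref{alg:look ahead}: whenever a vertex $u$ is added to $S$ at some iteration, that iteration starts by replacing the working set by $I \setminus (\{u\} \cup \delta_u)$ (Step~\ref{alg_step:select}); thereafter only further deletions occur. Consequently, for every $u \in S$ the current remaining set satisfies $(\{u\} \cup \delta_u) \cap I = \emptyset$. Applying this to $u = v$ in the hypothetical case $v \in S$, and using that $C$ is a clique so $C \setminus \{v\} \subseteq \delta_v$, we get $C \cap I \subseteq (\{v\} \cup \delta_v) \cap I = \emptyset$, contradicting the standing assumption $C \cap I \neq \emptyset$.

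Therefore $v \in S_I$, and since $v \in C$ and $S_I \subseteq I$ we conclude $v \in S_I \cap (C \cap I)$, establishing essentiality of $C \cap I$ in $G|_I$. I do not expect any real obstacle here; the only care needed is making sure the invariant $(\{u\} \cup \delta_u) \cap I = \emptyset$ for $u \in S$ is preserved across both the discard loop and the look-ahead branch of the algorithm, but since both of these only shrink $I$ further, it is preserved trivially.
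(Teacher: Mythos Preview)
Your proof is correct and follows essentially the same approach as the paper: both use Lemma~\ref{lem:fund_VFA_prop}(\ref{item:fund_VFA_prop_3}) to lift a maximum stable set $S_I$ of $G|_I$ to a maximum stable set $S\cup S_I$ of $G$, then use essentiality of $C$ together with the observation that $C\cap S=\emptyset$ (since selecting any $v\in C$ would remove all of $C$ from $I$) to conclude $C\cap I$ meets $S_I$. The only cosmetic difference is that the paper establishes $C\cap S=\emptyset$ up front, whereas you reach it via a case split after fixing $S_I$.
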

In the lemma, it is necessary to assume we are on an optimal trajectory; otherwise, $C\cap I$ might not contain any vertex in an MWSS of $G|_I$.
\begin{proof}
Let $S$ be the set of selected vertices by Algorithm~\ref{alg:look ahead}.
Note that $C\cap S=\emptyset$, otherwise $C\cap I=\emptyset$, a contradiction.
Since $C$ is essential in $G$ and $S$ is on an optimal trajectory, every MWSS of $G|_I$ should contain a vertex in $C\cap I$ by the third property of Lemma~\ref{lem:fund_VFA_prop}.   
\end{proof}

Recall that after \PI every vertex belongs to an MWSS.
Hence, after \PI,
\begin{align*}
    \calV_{\LP}(I)-\calV_{\LP}(I\setminus(\{i\}\cup\delta_i))>w_i 
\quad\iff\quad \exists C\text{ essential}, C\subseteq \delta_i\cup(N \setminus I), C\cap I \cap \delta_i\neq\emptyset.
\end{align*}
For any such $C$, if one is on an optimal trajectory, $C\cap I\subseteq \delta_i\cap I$ is an essential clique of $G|_I$ by Lemma~\ref{lem:ess_clique_shrink}, which implies that 
$i$ does not belong to any MWSS of $G|_I$ and should be discarded.
With this interpretation, after \PI, Algorithm~\ref{alg:look ahead} with $\calV_{\LP}$ ``shrinks'' essential cliques as it discards and selects vertices. 

\subsection{Algorithm without Look-Ahead}

In this subsection, we specify a graph sub-structure that can cause the algorithm to fail without a look-ahead, and provide an example. 
We start with some preliminary results.
\begin{lemma}
\label{lem:look ahead}
Consider an iteration of Algorithm~\ref{alg:look ahead} that starts with $S$ and $I$, where $S$ is on an optimal trajectory. 
With $i$ being selected, if $\calV(I)>\calV(I')+w_i$ at step~\ref{alg_step:lookahead start}, $i$ is not in any MWSS of $G|_I$.
\end{lemma}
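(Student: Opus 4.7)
The plan is to argue by contradiction: assume $i$ lies in some maximum stable set of $G|_I$ and show that, under this assumption, the look-ahead test at step~\ref{alg_step:lookahead start} could never fire.

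First, since $S$ is on an optimal trajectory, Lemma~\ref{lem:fund_VFA_prop}\ref{item:fund_VFA_prop_3} gives $\calV(I)=\alpha(I)$. Let $I_0' := I\setminus(\{i\}\cup\delta_i)$ be the set produced at step~\ref{alg_step:select} before the inner discard loop runs. Under the contradiction hypothesis, removing $i$ together with its neighbors leaves a copy of a maximum stable set of $G|_I$ minus $\{i\}$, so $\alpha(I_0') = \alpha(I) - w_i$. Combining \eqref{eq:valuefun} (which gives $\calV(I_0') \le \calV(I)-w_i$) with Lemma~\ref{lem:VFA_Bound} (which gives $\calV(I_0') \ge \alpha(I_0') = \alpha(I)-w_i$) forces equality throughout, so $\calV(I_0') = \alpha(I_0') = \alpha(I)-w_i$.

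Next, I track $\calV(I') = \alpha(I')$ by induction through the inner while loop that discards vertices $j$ with $\calV(I')-\calV(I'\setminus(\{j\}\cup\delta_j))>w_j$. Suppose the invariant $\calV(I')=\alpha(I')$ holds at the start of some iteration of the loop; it does for $I_0'$ by the previous paragraph. Lemma~\ref{lem:fund_VFA_prop}\ref{item:fund_VFA_prop_1} then implies that such a $j$ is not in any maximum stable set of $G|_{I'}$, whence $\alpha(I'\setminus\{j\})=\alpha(I')$. Moreover, Lemma~\ref{lem:fund_VFA_prop}\ref{item:fund_VFA_prop_2} yields $\calV(I'\setminus\{j\})=\alpha(I'\setminus\{j\})$, so the invariant is preserved and the common value is still $\alpha(I)-w_i$. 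Applying this step by step, when the loop terminates and reaches step~\ref{alg_step:lookahead start}, we have $\calV(I')=\alpha(I)-w_i=\calV(I)-w_i$, contradicting the assumption $\calV(I)>\calV(I')+w_i$. Hence $i$ cannot lie in any maximum stable set of $G|_I$.

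The one subtle point — and the main thing that needs care rather than cleverness — is arranging the $\calV=\alpha$ invariant across the inner discard loop. The two parts of Lemma~\ref{lem:fund_VFA_prop} are tailored exactly for this: part (i) keeps $\alpha$ unchanged, while part (ii) pulls $\calV$ back down to $\alpha$ after each deletion. Everything else in the proof is bookkeeping around the equality $\alpha(I_0') = \alpha(I)-w_i$ enforced by the contradiction hypothesis together with the defining VFA inequality~\eqref{eq:valuefun}.
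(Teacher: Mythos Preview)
Your proof is correct and follows essentially the same approach as the paper's. Both arguments use Lemma~\ref{lem:fund_VFA_prop}\ref{item:fund_VFA_prop_3} to obtain $\calV(I)=\alpha(I)$, then show that if $i$ lies in a maximum stable set of $G|_I$ one gets $\calV(I\setminus(\{i\}\cup\delta_i))=\alpha(I\setminus(\{i\}\cup\delta_i))$, and finally propagate the equality $\calV=\alpha$ through the inner discard loop via parts \ref{item:fund_VFA_prop_1} and \ref{item:fund_VFA_prop_2} to conclude $\calV(I')=\calV(I)-w_i$, contradicting the look-ahead condition. The only cosmetic difference is that the paper phrases this as a two-case split on whether $\alpha(I)>\alpha(I\setminus(\{i\}\cup\delta_i))+w_i$, whereas you proceed by direct contradiction; your version is arguably a bit cleaner since it avoids the trivial case.
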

\begin{proof}
    Since $S$ is on an optimal trajectory and $i$ was not previously discarded, by Lemma~\ref{lem:fund_VFA_prop} we have 
    \(\alpha(I)=\calV(I)=\calV(I\setminus(\{i\}\cup\delta_i))+w_i>\calV(I')+w_i\geq \alpha(I')+w_i.\)
    If $\alpha(I)>\alpha(I\setminus(\{i\}\cup\delta_i))+w_i$,  $i$ is not in any MWSS of $G|_I$.
    Otherwise, $\alpha(I\setminus(\{i\}\cup\delta_i))+w_i=\alpha(I)=\calV(I\setminus(\{i\}\cup\delta_i))+w_i$, and then by the first and second property of Lemma~\ref{lem:fund_VFA_prop}, $\calV(I\setminus(\{i\}\cup\delta_i))=\calV(I')$, a contradiction.
 \end{proof}

The following proposition provides a necessary and sufficient condition for Algorithm~\ref{alg:look ahead} to return an MWSS for any graph $G$.
\begin{proposition}
    \label{prop:weight_hold}
    Let $G$ be a graph and $\calV$ be a tight VFA. 
    Algorithm~\ref{alg:look ahead} returns an MWSS of $G$ if and only if at the end of every iteration of~\PII~in which we select a vertex $i$, we have $\calV(I)=\calV(I')+w_i$. 
\end{proposition}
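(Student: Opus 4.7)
The plan is to prove the two directions of this equivalence separately, both leveraging Corollary~\ref{cor:fund_VFA_prop}.

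For the forward direction ($\Rightarrow$), suppose Algorithm~\ref{alg:look ahead} returns a maximum stable set $S^\star$. Then inductively the partial set $S$ maintained by the algorithm is a subset of $S^\star$ at the start of every iteration, so $S$ lies on an optimal trajectory throughout. Applying Corollary~\ref{cor:fund_VFA_prop} at the start of a selection iteration yields $\sum_{j\in S} w_j + \calV(I) = \alpha(N)$, and applying it at the start of the next iteration (after the updates $S \gets S \cup \{i\}$ and $I \gets I'$) gives $\sum_{j\in S} w_j + w_i + \calV(I') = \alpha(N)$. Subtracting these two identities produces the desired equality $\calV(I) = \calV(I') + w_i$.

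For the backward direction ($\Leftarrow$), the plan is to induct on iterations of \PII and show that $S$ remains on an optimal trajectory throughout; once this is in place, termination at $I = \emptyset$ makes the output $S$ a maximum stable set. The base case $S = \emptyset$ is immediate, and Lemma~\ref{lem:fund_VFA_prop}(iii) gives $\calV(I) = \alpha(I)$ at the start of each iteration under the inductive hypothesis. At a discard iteration the look-ahead guard triggers, so by Lemma~\ref{lem:look ahead} the discarded vertex is not in any maximum stable set of $G|_I$; hence $\alpha(I \setminus \{i\}) = \alpha(I)$, the set $S$ stays on an optimal trajectory, and Lemma~\ref{lem:fund_VFA_prop}(ii) propagates $\calV = \alpha$ to the new $I$. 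At a selection iteration, the hypothesis $\calV(I) = \calV(I') + w_i$ combined with $\calV(I) = \alpha(I)$ gives $\calV(I') = \alpha(I) - w_i$; since $I' \cup \{i\} \subseteq I$ with $i$ having no neighbor in $I'$, we also have $\alpha(I') + w_i \leq \alpha(I)$, so $\alpha(I') \leq \calV(I')$ consistent with Lemma~\ref{lem:VFA_Bound}. The task reduces to upgrading this to the equality $\alpha(I') = \calV(I') = \alpha(I) - w_i$, which would certify that $i$ lies in some maximum stable set of $G|_I$ and place $S \cup \{i\}$ back on an optimal trajectory.

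The main obstacle is closing that last sandwich in the selection case. The planned route is to exploit the tight VFA-identity enforced by the inner \textbf{while} loop on $I'$: upon its exit, every $j \in I'$ satisfies $\calV(I') - \calV(I' \setminus (\{j\} \cup \delta_j)) = w_j$, since the loop's halting condition combined with~\eqref{eq:valuefun} pins the VFA inequality to equality on $I'$. I would pick $j$ in a maximum stable set of $G|_{I'}$ and descend via this tight identity, invoking Lemma~\ref{lem:VFA_Bound} and $\calV(I') = \alpha(I) - w_i$ to squeeze $\calV(I')$ down to $\alpha(I')$. Once that step is secured, $\alpha(I) = w_i + \alpha(I')$ exhibits a maximum stable set of $G|_I$ containing $i$, the induction on optimal trajectories closes, and the returned $S$ (after \PII terminates with $I = \emptyset$) is a maximum stable set of~$G$.
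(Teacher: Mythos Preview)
Your forward direction via Corollary~\ref{cor:fund_VFA_prop} is fine. The backward direction, however, has a genuine gap at exactly the point you flag, and your proposed route does not close it. The exit condition of the inner loop gives $\calV(I') - \calV(I'\setminus(\{j\}\cup\delta_j)) = w_j$ for every $j\in I'$, but that identity lives only at the level of $I'$: once you pass to $J_1 := I'\setminus(\{j_1\}\cup\delta_{j_1})$ there is no reason the analogous equality holds on $J_1$. Chaining along a maximum stable set $T$ of $G|_{I'}$ therefore only reproduces
\[
\calV(I') \,=\, w_{j_1}+\calV(J_1) \,\geq\, w_{j_1}+w_{j_2}+\calV(J_2) \,\geq\, \cdots \,\geq\, w(T) \,=\, \alpha(I'),
\]
which is Lemma~\ref{lem:VFA_Bound} again --- the reverse inequality never appears. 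Worse, the paper remarks immediately after the proposition that one can have $\calV(I) = \calV(I') + w_i$ at a single step while $\alpha(I) > \alpha(I') + w_i$; in that situation $i$ is a genuinely suboptimal choice that the look-ahead misses, and your invariant ``$S$ is on an optimal trajectory'' provably fails to propagate through that selection step. So the inductive scheme cannot be completed from local information.

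The paper's argument is structurally different: it never attempts to maintain the optimal-trajectory invariant. It simply telescopes the assumed equalities across the run,
\[
\alpha(N) \;=\; \calV(I^1) \;=\; \calV(I^2)+w_1 \;=\; \cdots \;=\; \calV(\emptyset)+\sum_t w_t \;=\; w(S),
\]
and since $S$ is stable this forces it to be maximum --- no intermediate claim about $\alpha(I^t)$ is ever needed. The phrase ``we can assume \ldots\ without applying the look-ahead'' is how the paper sets up this chain: the proposition is being read as a statement about runs in which every iteration is a selection, so the telescoping is immediate and your careful treatment of discard iterations is beside the point.
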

\begin{proof}
    $(\impliedby)$ For every iteration, if $S$ is on an optimal trajectory, Lemma~\ref{lem:look ahead} ensures that the look-ahead only discards suboptimal vertices; we can assume Algorithm~\ref{alg:look ahead} terminates after $K$ iterations of \PII~without applying the look-ahead.
    Let $I^t$ be the set of available vertices at the beginning of iteration $t$, where $I^1=N$, and without loss of generality, let $t$ be the vertex selected during iteration $t$.
    Then  
    \(\alpha(N)=\calV(I^1)=\calV(I^2)+w_1=\cdots=\calV(\emptyset)+w_{K}+\cdots+w_1.\)
    Since $\{1,\ldots,K\}$ is a stable set, it is an MWSS of $G$.
    
    $(\implies)$ If $\calV(I^t)>\calV(I^{t+1})+w_t$ for some $t$, then 
    \(\alpha(N)>\calV(\emptyset)+w_K+\cdots+w_1,\)
    which implies $\{1,\ldots,K\}$ is not an MWSS of $G$. 
\end{proof}

We emphasize that Proposition~\ref{prop:weight_hold} requires the equality to hold for \emph{every iteration} in which we add a vertex to $S$.
Thus, for Algorithm~\ref{alg:look ahead} to fail, $\calV(I)>\calV(I')+w_i$ in some iteration; 
we call such $i\in I$ a \emph{bad choice} of this iteration or of $I$. 
Indeed, there may be an iteration where $\calV(I) = \calV(I') + w_i$, but $\alpha(I) > \alpha(I') + w_i$, which means $i$ is not an optimal choice but the algorithm cannot detect it. 
The proposition only guarantees that at some iteration later in the process, all remaining vertices will be bad choices; at that point we realize we made a suboptimal choice, but we will not know which vertex it was.

With respect to $\calV_{\LP}$, a bad choice $i$ is equivalent to $\calV_{\LP}(I)>\calV_{\LP}(I')+w_i=\calV_{\LP}(I'\cup\{i\})$.
By the definition of $\calV_{\LP}$, we have 
\[
\calV_{\LP}(I)-\calV_{\LP}(I'\cup\{i\})=\sum_{C\in \mathcal{C}(G),C\cap (I'\cup\{i\})=\emptyset, C\cap I\neq \emptyset}\mu^*_C>0.
\]
In other words, there is an essential clique $C\in\mathcal{C}(G)$ with $C\cap (I'\cup\{i\})=\emptyset$ and $ C \cap I \neq \emptyset$. 
If one is on an optimal trajectory, this implies that $ C \cap I $ is an essential clique of $G|_I$ that would be discarded by selecting $i$. 
However, $ C \cap I \not\subseteq \delta_i \cap I $ (otherwise the algorithm would discard $i$ in the previous iteration). 
We conclude that for the algorithm to fail, after selecting $i$ and discarding $\delta_i$, every vertex of $C \cap I\setminus(\{i\}\cup\delta_i) \neq \emptyset$ contains an essential clique in its neighbor set (checked in step \ref{alg_step:discard}), so $C$ is completely discarded.
Next we discuss a necessary sub-structure for such a bad choice to exist within Algorithm \ref{alg:look ahead}.

\begin{definition}
    A graph $G=(N,E)$ is a \emph{house} if $N=[2k]\cup \{v\}$, where $G|_{[2k]}$ is an even hole and there are $i,j\in[2k]$ such that $G|_{\{i,j,v\}}$ is a triangle.
    We call $v$ the \emph{top} of the~house.
\end{definition}

\begin{proposition}
\label{prop:house_structure}
    Consider any perfect graph $G=(N,E)$ and $\calV_{\LP}$ constructed from a strictly complementary solution of~\eqref{LP-D}.
    If in some iteration of Algorithm~\ref{alg:look ahead} there is a bad choice $i\in I$, $G|_I$ contains a house as an induced subgraph.
\end{proposition}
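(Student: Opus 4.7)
The plan is to translate the failure of the look-ahead into the existence of two overlapping essential cliques, extract three vertices $i,j,k$ with a forced non-adjacency pattern, and then use perfection of $G|_I$ to close this into a house.

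First, I would sharpen the bad-choice hypothesis. Since $i$ was not discarded by the inner while loop of the previous iteration, the combinatorial characterization from Section~\ref{sec:select and discard} gives $\calV_{\LP}(I)-\calV_{\LP}(I\setminus(\{i\}\cup\delta_i))=w_i$, while the bad-choice condition says $\calV_{\LP}(I)>\calV_{\LP}(I')+w_i$. Subtracting yields $\calV_{\LP}(I\setminus(\{i\}\cup\delta_i))>\calV_{\LP}(I')$, so expanding via \eqref{eq:V_LP} together with strict complementarity produces an essential clique $C$ with $\mu^*_C>0$, $C\cap I\neq\emptyset$, $i\notin C$, and $C\cap I'=\emptyset$. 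Since the non-discard of $i$ rules out every essential clique contained in $\delta_i\cup(N\setminus I)$ that touches $\delta_i\cap I$, we cannot have $C\cap I\subseteq\delta_i$. Pick $j\in C\cap I$ with $ij\notin E$.

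Next I would exploit the elimination of $j$. Because $j\in I\setminus(\{i\}\cup\delta_i)$ but $j\notin I'$, the inner while loop discarded $j$ at some point; applying the same combinatorial characterization to that step yields a second essential clique $C'$ with $\mu^*_{C'}>0$ and a vertex $k\in C'\cap\delta_j\cap I'_j\subseteq I\setminus(\{i\}\cup\delta_i)$. Thus $jk\in E$, $ij\notin E$, and $ik\notin E$: three of the five vertices of the target house are in hand, with exactly the non-edges required for the two bottom corners ($j,k$) and the top ($i$).

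The remaining two vertices $a,b\in\delta_i$ are obtained from essentiality. By strict complementarity and \PI, every vertex of $I$ lies in some MSS of $G$. Take an MSS $S^*_k$ through $k$: essentiality of $C$ forces it to meet $C$ at a unique vertex $b$, which must satisfy $b\neq j$ (else $j,k\in S^*_k$ violates stability), $bk\notin E$ (stability), and $bj\in E$ (clique $C$). Symmetrically, an MSS $S^*_j$ meets $C'$ in a unique $a$ with $aj\notin E$ and $ak\in E$. I would then argue $a,b\in\delta_i$ by noting that if, say, $b\notin\delta_i$, then $b\in I\setminus(\{i\}\cup\delta_i)$ was itself eliminated in the inner while loop, giving rise to a further essential clique and a chain of discarded vertices; this chain is finite and must terminate at a vertex of $\delta_i$, which can be relabelled as $b$ (at the cost of potentially lengthening the base cycle of the house). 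Finally, $ab$ must be an edge: otherwise $a$-$i$-$b$-$j$-$k$-$a$ would be an induced chordless $5$-cycle, contradicting the Strong Perfect Graph Theorem applied to $G|_I$. The five vertices $\{i,a,b,j,k\}$ then induce a house in $G|_I$, with top $i$, triangle $\{i,a,b\}$, and base $4$-cycle $a$-$b$-$j$-$k$-$a$.

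The main obstacle is the final step of the previous paragraph: the ``chain'' case, where the MSS argument does not immediately place $a$ or $b$ inside $\delta_i$. Unwinding this chain requires a careful induction on the discard order together with the absence of induced odd cycles to forbid chords in the enlarged even hole—yielding a house whose base $2k$-cycle may be strictly longer than $4$, which still matches the definition. Once $a,b$ (or their chain-extended replacements) are pinned down, verifying the edges $ai,bi,ab,bj,ak,jk$ and the non-edges $ij,ik,aj,bk$ is direct from the clique memberships in $C,C'$ and the stability of the MSSs $S^*_j,S^*_k$.
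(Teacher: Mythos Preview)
Your overall strategy---extract an essential clique $C$ that gets wiped out, trace the discard of some $j\in C\cap I$ to a second essential clique $C'$, and then close an odd cycle through the bad choice $i$ to force a house via perfection---is the same as the paper's. The gap is in how you locate the two ``roof'' vertices $a,b$.

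Your MSS shortcut does not keep $a,b$ inside $I$. You take $S^*_k$ to be a maximum stable set of $G$ through $k$ and let $b$ be the unique vertex of $S^*_k\cap C$. But $S^*_k$ need not contain the already-selected set $S$, so $b$ can lie in $C\setminus I$ (for instance, $b$ may be a neighbour of some earlier confirmed selection $i'\in S$, or may have been discarded in a previous iteration's inner loop). In that case the five vertices $\{i,a,b,j,k\}$ do not sit in $G|_I$, and the proposition's conclusion is about $G|_I$, not $G$. Switching to maximum stable sets of $G|_I$ does not rescue the argument either: although $C\cap I$ and $C'\cap I$ are essential in $G|_I$ on an optimal trajectory, you would need $j$ and $k$ to lie in some MSS of $G|_I$, and that is not guaranteed---the equality $\calV_{\LP}(I)-\calV_{\LP}(I\setminus(\{j\}\cup\delta_j))=w_j$ only gives $\alpha(I)=w_j+\calV_{\LP}(I\setminus(\{j\}\cup\delta_j))\geq w_j+\alpha(I\setminus(\{j\}\cup\delta_j))$, which is vacuous. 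The paper avoids this issue entirely by never invoking MSSs: it builds two alternating chains $P_i,P_j$ by following, within the \emph{current} inner while loop, the cascade of discards from the last two surviving vertices of $C\cap I$ back to $\delta_{v^*}$. Every vertex produced this way is in $I$ by construction.

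Even granting $a,b\in I$, your ``chain'' paragraph is where the real work lies, and the sketch there is not enough. Once the base cycle is longer than four, you must rule out every chord that would spoil the induced even hole while still guaranteeing a triangle sharing an edge with it. The paper does this with a sequence of claims: no edge between $V_1$ and $j$ (resp.\ $U_1$ and $i$), a without-loss-of-generality reduction eliminating edges between the $z_\ell$'s and $y_g$'s (and between the $U_g$'s and $V_\ell$'s), and then a ``first knot'' case analysis that, using the odd-hole-free property, pins down an induced house. Your proposal does not supply any of this chord control, and without it the odd closed walk through $i$ need not contain an \emph{induced} house.
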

\begin{proof}
    At the beginning of some iteration, let $S$ be the selected set of vertices, $I$ be the set of remaining vertices.
    For the sake of contradiction, suppose that $v^*\in I$ is a bad choice.
    Let $I'$ be the set of vertices after selecting $v^*$ and step~\ref{alg_step:discard} of Algorithm~\ref{alg:look ahead}.
    As discussed above, $\calV_{\LP}(I)>\calV_{\LP}(I')+w_{v^*}$ means there is some $C\in\mathcal{C}(G)$ with $C \cap (I'\cup\{v^*\}) = \emptyset, C\cap I\neq \emptyset$.
    By Lemma~\ref{lem:ess_clique_shrink}, without loss of generality we restrict all the cliques we discuss onto $I$ ($C\gets C\cap I$).
    
    First, $C$ is not contained in the neighbor set of $v^*$; otherwise, $v^*$ would be discarded in a previous iteration. 
    Also, the vertices in $C$ are discarded sequentially, so at some point in steps~\ref{alg_step:select}-\ref{alg_step:discard}, there is only $i\in C$ remaining and there is an essential clique $V_1$ contained in the neighbor set of $i$. 
    However, $V_1$ is itself not an essential clique of $G|_I$, otherwise $i$ would have been discarded before choosing $v^*$.
    That is, there exists an essential clique $V_1'$ of $G|_I$ with 
    $V_1\subsetneq V_1'$, and all vertices in $V_1'\setminus V_1$ have been discarded.
    Let $z_1 \in V_1' \setminus V_1$.
    The argument for $i$ can be applied to $z_1$; some essential clique $V_2$ is contained in its neighbor set, and $V_2'$ can be found in the same way.
    This argument applies repeatedly until we reach some $V_{k+1}:= C_1 \setminus \delta_{v^*} \neq \emptyset$, where $C_1$ is an essential clique of $G|_I$ with $C_1 \cap \delta_{v^*} \neq \emptyset$.
    Define $S_V:=C_1\cap \delta_{v^*}$.
    This constitutes a chain with alternating discarded vertices and essential cliques,
    \(
    P_i := i \to V_1\to z_1\to V_2\to \dotsb \to z_k\to V_{k+1}\to S_V.
    \)
    Intuitively, when $v^*$ is selected, $\delta_{v^*}$ is discarded from $I$, so $V_{k+1}$ becomes an essential clique of $I\setminus(v^*\cup\delta_{v^*})$. 
    Then, vertices like $z_k$, which contain $V_{k+1}$ in their neighbor sets, are discarded; smaller cliques become essential, and the process repeats until $i$ is discarded. 

    Moreover, $i$ by itself is not an essential clique of $G|_I$ (otherwise we would discard all of its neighbors and select it). Therefore, there is some $j\in C\setminus\{i\}$; suppose this is the second-to-last vertex in $C$ that is discarded. 
    That is, before we discard $j$, $\{i,j\}$ is an essential clique.
    Also, without loss of generality, we assume that before $j$ is discarded, all other vertices in the remaining graph except for $i$ and $j$ do not contain an essential clique in their neighbor set, as the argument depends on the induced subgraph discussed below.
    The same argument for $i$ can be applied to $j$ to find another alternating chain of discarded vertices and essential cliques,
    \(
    P_j := j \to U_1\to y_1\to U_2\to \dotsb \to y_h\to U_{h+1}\to S_U,
    \)
    where $U_{h+1}:=C_2\setminus\delta_{v^*}\neq\emptyset$ and $C_2$ is an essential clique of $G|_I$ with $C_2\cap\delta_{v^*}\neq\emptyset$.
    Unlike $P_i$, which has length at least one, $P_j$ may have length $0$, that is, $j\in S_U$ is possible; see Example~\ref{ex:house-ct} below.
    
    Consider picking a vertex from each of $V_\ell,U_g$; then $P_i,P_j$ are even paths, paths with an odd number of edges.  
    Similarly, for any $\ell\geq 1$, $i\to V_1\to z_1\to\ldots\to V_\ell$ is an odd path  and $i\to V_1\to z_1\to\ldots\to z_\ell$ is an even path. The same argument applies to $j$;
    see Figure \ref{fig:house} for an example.
    For convenience, let $z_0 := i $, $y_0 := j$. 
\begin{claim}
    There is no edge between $V_1$ and $j$ (respectively, $U_1$ and $i$).
\end{claim}
\begin{proof}
    Since $\{i,j\}$ is essential, $u\in V_1$ being adjacent to $j$ implies that $\{i,j\}\subseteq \delta_u$, so $u$ would be discarded before.
    A similar argument applies to $U_1$ and $i$.
\end{proof}
    
\begin{claim}
For every pair $y_g, z_{\ell}$ with $\max\{g,\ell\}\geq 1$, we may assume without loss of generality that there is no edge between them. 
The same applies to any $u_g\in U_g, v_{\ell}\in V_\ell$.
\end{claim}
\begin{proof}
    Suppose not; then we replace $j$ by $y_g$ and $i$ by $z_\ell$ or by $u_g,v_\ell$ respectively, until no such pair exists.
    That is, $P_i, P_j$ become
\(
        P_i = z_\ell \to V_{\ell+1}\to  \dotsb \to z_k\to V_{k+1}\to S_V;~
        P_j = y_g \to U_{g+1}\to  \dotsb \to y_h\to U_{h+1}\to S_U\)
    or, 
\(
        P_i = v_\ell \to z_{\ell}\to  \dotsb \to z_k\to V_{k+1}\to S_V;~
        P_j = u_g \to y_{g}\to  \dotsb \to y_h\to U_{h+1}\to S_U
    \)
    respectively.
    So the paths $P_i, P_j$ are either both odd or even.
    Then the closed walk $v^*\to S_V\to v_{k+1}\to\dotsb z_\ell\to y_g\to\dotsb u_{h+1}\to S_U\to v^*$ is odd.
    For the rest of the proof, all we need is that no such adjacent pair exists except for $z_\ell$, $y_g$; it is not required that $z_\ell,y_g$ are essential.
\end{proof}
\begin{claim}
    Without loss of generality, we can assume that $y_g \notin V_{\ell}$ for any $ g, \ell \geq 1 $. The same applies for $z_{\ell}, U_g$.
\end{claim}
\begin{proof}
    Assume $y_g\in V_{\ell}$. 
    Then $y_g$ is adjancent to $z_{\ell-1}$, so the previous claim applies.
\end{proof}
If for some $g,\ell\geq 0$, $y_g=z_{\ell}$ or $U_g\cap V_{\ell}\neq\emptyset$, or there is an edge between $y_g$ and $V_{\ell}$, or between $z_{\ell}$ and $U_g$, then we call such a pair $(y_g,z_{\ell})$, ($U_g,V_{\ell})$, $(y_g,V_{\ell}), (z_{\ell},U_g)$ a \emph{knot}. 
We call the knot with $\min\{g,\ell\}$ minimized the \emph{first knot}.
\begin{claim}
Applying the claims above, $G|_I$ either contains a house as an induced subgraph or does not have a knot.
\end{claim}
\begin{proof}
    Suppose there is a knot. 
    We consider three cases for the first knot:
    \begin{enumerate}[leftmargin=*]
        \item The first knot $(y_g,z_{\ell})$ with $y_g=z_{\ell}$ has $\min\{g,\ell\}\geq 1$; then 
    \[y_0\to u_1\to y_1\to\ldots\to y_g=z_{\ell}\to v_\ell\to z_{\ell-1}\to v_{\ell-1}\to\cdots\to v_1\to z_0\to y_0\]
    has $2g+ 2\ell+1$ edges and forms an odd hole; or 
    \(v_\ell\to z_\ell=y_g\to u_g\to v_\ell\)
    forms a triangle when $i,j$ are replaced by $v_\ell,u_g$.
    Since the graph is perfect, there is no odd hole, so we consider the second case.
    
    Consider $z_{\ell-1},y_{g-1}$, suppose there is no edge between $z_{\ell-1}$, $u_g$ and between $y_{g-1},v_\ell$.
    Then if $\{z_{\ell-1},y_{g-1}\}\in E$, 
    $G|_{\{z_{\ell-1},v_\ell,z_\ell=y_g,u_g,y_{g-1}\}}$ is an induced house.
    If not, consider $v_{\ell-1}\in V_{\ell-1},u_{g-2}\in U_{g-2}$.
    Suppose they are adjacent, if there is no edge between $v_{\ell-1}, y_{g-1}$ or between $u_{g-1},z_{\ell-1}$, $G|_{\{v_{\ell-1},z_{\ell-1},v_\ell,z_\ell=y_g,u_g,y_{g-1},u_{g-1}\}}$ is an induced house; or we find a triangle and apply the above argument again.
    Suppose there is no edge between $V_{\ell-1}$ and $U_{g-1}$; consider $z_{\ell-2},u_{g-2}$ and apply the same argument, until we either find an induced house or reach $i,j$. 
    Then since there is no edge between $i, U_1$ and between $j,V_1$, $\{i,\ldots,v_{\ell-1},y_g,u_{g},\ldots,j\}$ is an induced subgraph.
    
    If such edges exist, say $\{z_{\ell-1},u_g\}\in E$, then if $\{z_{\ell-1},y_{g-1}\}\in E$, $\{z_{\ell-1},u_g,y_{g-1}\}$ forms a triangle, apply the above argument to it; otherwise, consider $\{u_g, y_{g-1},u_{g-1},v_{\ell-1},z_{\ell-1}\}$ for some $v_{\ell-1}\in V_{\ell_1}$, $u_{g-1}\in U_{g-1}$. 
    If $\{v_{\ell-1},u_{g-1}\}\in E$, then to avoid odd holes, either $\{z_{\ell-1},u_{g-1}\}\in E$ or $\{v_{\ell-1},y_{g-1}\}\in E$; without loss of generality, consider the first case: then $\{z_{\ell-1},u_{g-1},v_{\ell-1}\}$ forms a triangle, and the same argument above applies.
    If $\{v_{\ell-1},u_{g-1}\}\notin E$, then consider $z_{\ell-2},u_{g-2}$ and apply the same argument until reaching $\{i,j\}\in E$; then $\{i,\ldots,z_{\ell-1},u_g,u_{g-1},\ldots,j\}$ is an odd hole, contradiction.

\item The first knot $(U_g, V_\ell)$ has $u^* \in U_g \cap V_\ell$. 
Then the same argument from the previous case applies by replacing $z_\ell=y_g$ by $u^*$.
    
    \item The first knot is $(y_g, V_\ell)$ or $(z_\ell, U_g)$. Then the same argument from the first case applies by replacing $z_\ell=y_g$ by $y_g$ (or $z_\ell$ respectively).  \qed
    \end{enumerate}
\end{proof}
Now suppose $G|_I$ does not have a knot.
Consider
\[
\mathcal{H}:=v^*\to S_V\to V_{k+1}\to z_k\to\cdots\to z_0\to y_0\to\cdots\to y_h\to U_{h+1}\to S_U\to v^*,
\]
an odd cycle with at least five edges; 
since the graph is perfect, this cycle has a chord.
Chords separate the odd cycle into induced odd holes, triangles, and even holes.
However, since there is no edge between $V_1,j$ and between $U_1,i$, there is always a hole in $\mathcal{H}$. 
Since the graph is perfect, it is an even hole.
And since $\mathcal{H}$ is odd, there is always an induced triangle.
That is, there is always an induced subgraph of $\mathcal{H}$ which is a house.
\end{proof}
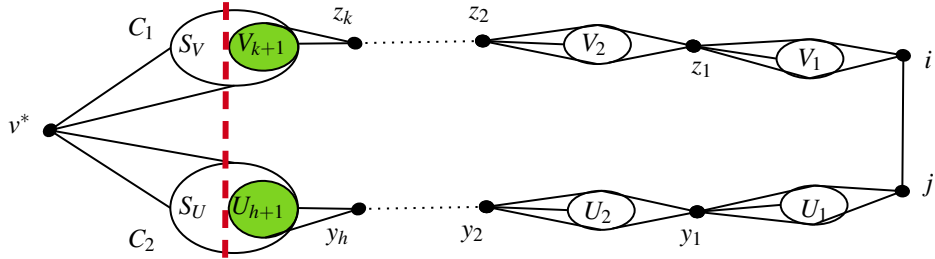
\begin{figure}[htb]
    \centering
\resizebox{0.6\columnwidth}{!}{    \begin{tikzpicture}[x=0.75pt,y=0.6pt,yscale=-1,xscale=1]

\draw    (570,247) -- (570.51,161.32) ;
\draw   (523.25,175.8) .. controls (513.95,175.74) and (506.48,170.17) .. (506.57,163.38) .. controls (506.66,156.58) and (514.28,151.13) .. (523.58,151.19) .. controls (532.89,151.26) and (540.36,156.82) .. (540.27,163.62) .. controls (540.18,170.41) and (532.56,175.87) .. (523.25,175.8) -- cycle ;
\draw    (523.58,151.19) -- (570.51,161.32) ;
\draw [shift={(570.51,161.32)}, rotate = 12.18] [color={rgb, 255:red, 0; green, 0; blue, 0 }  ][fill={rgb, 255:red, 0; green, 0; blue, 0 }  ][line width=0.75]      (0, 0) circle [x radius= 3.35, y radius= 3.35]   ;
\draw    (570.51,161.32) -- (523.25,175.8) ;
\draw    (506.57,163.38) -- (464.63,155.46) ;
\draw [shift={(464.63,155.46)}, rotate = 190.69] [color={rgb, 255:red, 0; green, 0; blue, 0 }  ][fill={rgb, 255:red, 0; green, 0; blue, 0 }  ][line width=0.75]      (0, 0) circle [x radius= 3.35, y radius= 3.35]   ;
\draw    (523.25,175.8) -- (464.63,155.46) ;
\draw [shift={(464.63,155.46)}, rotate = 199.14] [color={rgb, 255:red, 0; green, 0; blue, 0 }  ][fill={rgb, 255:red, 0; green, 0; blue, 0 }  ][line width=0.75]      (0, 0) circle [x radius= 3.35, y radius= 3.35]   ;
\draw    (523.58,151.19) -- (464.63,155.46) ;
\draw [shift={(464.63,155.46)}, rotate = 175.86] [color={rgb, 255:red, 0; green, 0; blue, 0 }  ][fill={rgb, 255:red, 0; green, 0; blue, 0 }  ][line width=0.75]      (0, 0) circle [x radius= 3.35, y radius= 3.35]   ;
\draw   (416.07,166.8) .. controls (406.77,166.74) and (399.3,161.17) .. (399.39,154.38) .. controls (399.48,147.58) and (407.1,142.13) .. (416.4,142.19) .. controls (425.71,142.26) and (433.18,147.82) .. (433.09,154.62) .. controls (432.99,161.42) and (425.38,166.87) .. (416.07,166.8) -- cycle ;
\draw    (416.4,142.19) -- (464.63,155.46) ;
\draw [shift={(464.63,155.46)}, rotate = 15.38] [color={rgb, 255:red, 0; green, 0; blue, 0 }  ][fill={rgb, 255:red, 0; green, 0; blue, 0 }  ][line width=0.75]      (0, 0) circle [x radius= 3.35, y radius= 3.35]   ;
\draw    (421.92,165.95) -- (464.63,155.46) ;
\draw    (399.39,154.38) -- (358.36,152.28) ;
\draw [shift={(358.36,152.28)}, rotate = 182.92] [color={rgb, 255:red, 0; green, 0; blue, 0 }  ][fill={rgb, 255:red, 0; green, 0; blue, 0 }  ][line width=0.75]      (0, 0) circle [x radius= 3.35, y radius= 3.35]   ;
\draw    (416.07,166.81) -- (358.36,152.28) ;
\draw [shift={(358.36,152.28)}, rotate = 194.12] [color={rgb, 255:red, 0; green, 0; blue, 0 }  ][fill={rgb, 255:red, 0; green, 0; blue, 0 }  ][line width=0.75]      (0, 0) circle [x radius= 3.35, y radius= 3.35]   ;
\draw    (416.4,142.19) -- (358.37,152.28) ;
\draw [shift={(358.37,152.28)}, rotate = 170.14] [color={rgb, 255:red, 0; green, 0; blue, 0 }  ][fill={rgb, 255:red, 0; green, 0; blue, 0 }  ][line width=0.75]      (0, 0) circle [x radius= 3.35, y radius= 3.35]   ;
\draw  [dash pattern={on 0.84pt off 2.51pt}]  (358.36,152.28) -- (293.88,153.64) ;
\draw  [fill={rgb, 255:red, 126; green, 211; blue, 33 }  ,fill opacity=1 ] (246.93,170.97) .. controls (237.31,170.59) and (229.97,163.56) .. (230.53,155.28) .. controls (231.09,147) and (239.34,140.6) .. (248.96,140.98) .. controls (258.57,141.37) and (265.92,148.39) .. (265.36,156.68) .. controls (264.79,164.96) and (256.55,171.36) .. (246.93,170.97) -- cycle ;
\draw    (248.96,140.98) -- (293.88,153.64) ;
\draw [shift={(293.88,153.64)}, rotate = 15.73] [color={rgb, 255:red, 0; green, 0; blue, 0 }  ][fill={rgb, 255:red, 0; green, 0; blue, 0 }  ][line width=0.75]      (0, 0) circle [x radius= 3.35, y radius= 3.35]   ;
\draw    (293.88,153.64) -- (265.54,153.97) ;
\draw   (233.14,180.45) .. controls (215.38,180.43) and (201,169.75) .. (201.03,156.6) .. controls (201.06,143.45) and (215.48,132.8) .. (233.24,132.82) .. controls (251.01,132.85) and (265.38,143.53) .. (265.36,156.68) .. controls (265.33,169.83) and (250.91,180.47) .. (233.14,180.45) -- cycle ;
\draw  [fill={rgb, 255:red, 126; green, 211; blue, 33 }  ,fill opacity=1 ] (246.78,240.54) .. controls (237.16,240.98) and (229.79,249.35) .. (230.31,259.23) .. controls (230.83,269.12) and (239.06,276.78) .. (248.68,276.34) .. controls (258.3,275.9) and (265.67,267.53) .. (265.15,257.65) .. controls (264.63,247.77) and (256.4,240.11) .. (246.78,240.54) -- cycle ;
\draw   (233.05,229.19) .. controls (215.28,229.17) and (200.86,241.88) .. (200.82,257.57) .. controls (200.79,273.27) and (215.16,286.01) .. (232.93,286.04) .. controls (250.69,286.06) and (265.12,273.35) .. (265.15,257.65) .. controls (265.18,241.96) and (250.81,229.21) .. (233.05,229.19) -- cycle ;
\draw    (200.82,257.58) -- (140.09,208.59) ;
\draw    (201.03,156.6) -- (140.09,208.59) ;
\draw [shift={(140.09,208.59)}, rotate = 139.53] [color={rgb, 255:red, 0; green, 0; blue, 0 }  ][fill={rgb, 255:red, 0; green, 0; blue, 0 }  ][line width=0.75]      (0, 0) circle [x radius= 3.35, y radius= 3.35]   ;
\draw    (233.04,229.19) -- (140.09,208.59) ;
\draw    (233.14,180.45) -- (140.09,208.59) ;
\draw [color={rgb, 255:red, 208; green, 2; blue, 27 }  ,draw opacity=1 ][line width=2.25]  [dash pattern={on 6.75pt off 4.5pt}]  (228.47,289) -- (228.9,127) ;
\draw   (525.33,268.1) .. controls (516.03,268.03) and (508.56,262.47) .. (508.65,255.67) .. controls (508.74,248.88) and (516.36,243.42) .. (525.66,243.49) .. controls (534.97,243.56) and (542.44,249.12) .. (542.35,255.92) .. controls (542.26,262.71) and (534.64,268.17) .. (525.33,268.1) -- cycle ;
\draw    (525.66,243.49) -- (570,247) ;
\draw [shift={(570,247)}, rotate = 4.53] [color={rgb, 255:red, 0; green, 0; blue, 0 }  ][fill={rgb, 255:red, 0; green, 0; blue, 0 }  ][line width=0.75]      (0, 0) circle [x radius= 3.35, y radius= 3.35]   ;
\draw    (570,247) -- (525.33,268.1) ;
\draw    (508.65,255.67) -- (466.55,260.06) ;
\draw [shift={(466.55,260.06)}, rotate = 174.05] [color={rgb, 255:red, 0; green, 0; blue, 0 }  ][fill={rgb, 255:red, 0; green, 0; blue, 0 }  ][line width=0.75]      (0, 0) circle [x radius= 3.35, y radius= 3.35]   ;
\draw    (525.33,268.1) -- (466.55,260.06) ;
\draw [shift={(466.55,260.06)}, rotate = 187.78] [color={rgb, 255:red, 0; green, 0; blue, 0 }  ][fill={rgb, 255:red, 0; green, 0; blue, 0 }  ][line width=0.75]      (0, 0) circle [x radius= 3.35, y radius= 3.35]   ;
\draw    (525.66,243.49) -- (466.55,260.06) ;
\draw [shift={(466.55,260.06)}, rotate = 164.34] [color={rgb, 255:red, 0; green, 0; blue, 0 }  ][fill={rgb, 255:red, 0; green, 0; blue, 0 }  ][line width=0.75]      (0, 0) circle [x radius= 3.35, y radius= 3.35]   ;
\draw   (417.99,271.41) .. controls (408.68,271.34) and (401.21,265.78) .. (401.3,258.98) .. controls (401.4,252.19) and (409.01,246.73) .. (418.32,246.8) .. controls (427.62,246.86) and (435.09,252.43) .. (435,259.22) .. controls (434.91,266.02) and (427.29,271.48) .. (417.99,271.41) -- cycle ;
\draw    (418.32,246.8) -- (466.55,260.06) ;
\draw [shift={(466.55,260.06)}, rotate = 15.38] [color={rgb, 255:red, 0; green, 0; blue, 0 }  ][fill={rgb, 255:red, 0; green, 0; blue, 0 }  ][line width=0.75]      (0, 0) circle [x radius= 3.35, y radius= 3.35]   ;
\draw    (423.84,270.56) -- (466.55,260.06) ;
\draw    (401.3,258.98) -- (360.28,256.89) ;
\draw [shift={(360.28,256.89)}, rotate = 182.92] [color={rgb, 255:red, 0; green, 0; blue, 0 }  ][fill={rgb, 255:red, 0; green, 0; blue, 0 }  ][line width=0.75]      (0, 0) circle [x radius= 3.35, y radius= 3.35]   ;
\draw    (417.99,271.41) -- (360.28,256.89) ;
\draw [shift={(360.28,256.89)}, rotate = 194.12] [color={rgb, 255:red, 0; green, 0; blue, 0 }  ][fill={rgb, 255:red, 0; green, 0; blue, 0 }  ][line width=0.75]      (0, 0) circle [x radius= 3.35, y radius= 3.35]   ;
\draw    (418.32,246.8) -- (360.28,256.89) ;
\draw [shift={(360.28,256.89)}, rotate = 170.14] [color={rgb, 255:red, 0; green, 0; blue, 0 }  ][fill={rgb, 255:red, 0; green, 0; blue, 0 }  ][line width=0.75]      (0, 0) circle [x radius= 3.35, y radius= 3.35]   ;
\draw  [dash pattern={on 0.84pt off 2.51pt}]  (360.28,256.89) -- (295.8,258.24) ;
\draw    (248.68,276.34) -- (295.8,258.24) ;
\draw [shift={(295.8,258.24)}, rotate = 338.99] [color={rgb, 255:red, 0; green, 0; blue, 0 }  ][fill={rgb, 255:red, 0; green, 0; blue, 0 }  ][line width=0.75]      (0, 0) circle [x radius= 3.35, y radius= 3.35]   ;
\draw    (295.8,258.24) -- (265.15,257.65) ;

\draw (516.11,155.57) node [anchor=north west][inner sep=0.75pt]    {$V_{1}$};
\draw (407.06,145.74) node [anchor=north west][inner sep=0.75pt]    {$V_{2}$};
\draw (462.2,163.24) node [anchor=north west][inner sep=0.75pt]    {$z_{1}$};
\draw (349.92,128.83) node [anchor=north west][inner sep=0.75pt]    {$z_{2}$};
\draw (579.66,156.91) node [anchor=north west][inner sep=0.75pt]    {$i$};
\draw (233.12,145.68) node [anchor=north west][inner sep=0.75pt]  [rotate=-358.69]  {$V_{k+1}$};
\draw (281.49,129.51) node [anchor=north west][inner sep=0.75pt]    {$z_{k}$};
\draw (203.68,146.34) node [anchor=north west][inner sep=0.75pt]    {$S_{V}$};
\draw (517.42,250.39) node [anchor=north west][inner sep=0.75pt]    {$U_{1}$};
\draw (408.19,252.19) node [anchor=north west][inner sep=0.75pt]    {$U_{2}$};
\draw (578.32,234.11) node [anchor=north west][inner sep=0.75pt]    {$j$};
\draw (456.58,269.31) node [anchor=north west][inner sep=0.75pt]    {$y_{1}$};
\draw (346.16,266.72) node [anchor=north west][inner sep=0.75pt]    {$y_{2}$};
\draw (278.12,268.77) node [anchor=north west][inner sep=0.75pt]    {$y_{h}$};
\draw (230.42,248.45) node [anchor=north west][inner sep=0.75pt]  [rotate=-359.7]  {$U_{h+1}$};
\draw (203.68,248.41) node [anchor=north west][inner sep=0.75pt]    {$S_{U}$};
\draw (117.55,196.51) node [anchor=north west][inner sep=0.75pt]  [rotate=-358.79]  {$v^{*}$};
\draw (178.54,135.34) node [anchor=north west][inner sep=0.75pt]  [rotate=-1.18]  {$C_{1}$};
\draw (178.54,269.38) node [anchor=north west][inner sep=0.75pt]    {$C_{2}$};

\end{tikzpicture}}
\caption{Paths from $i,j$ to $v^*$.}
    \label{fig:house}    
\end{figure}

Proposition~\ref{prop:house_structure} implies that Algorithm~\ref{alg:look ahead} with $\calV_{\LP}$ returns an MWSS on house-free perfect graphs without applying the look-ahead. 
And Proposition~\ref{prop:weight_hold} provides a necessary condition for the algorithm to possibly fail: in some iteration, $\calV(I)>\calV(I')+w_i$. 
As we indicate above, $\calV_{\LP}(I)>\calV_{\LP}(I')+w_i$ is equivalent to discarding all vertices of an essential clique. 
A house like the one in Figure \ref{fig:house} is required for that to happen.
\begin{corollary}
        Consider a triangle-free perfect graph (i.e.\ a bipartite graph) $G=(N,E)$, and $\calV_{\LP}$ constructed from a strictly complementary solution of~\eqref{LP-D}.
        Algorithm~\ref{alg:look ahead} returns an MWSS of $G$ without the look-ahead.
\end{corollary}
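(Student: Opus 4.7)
The plan is to derive this as an immediate corollary of Proposition~\ref{prop:house_structure}, exploiting the fact that every house contains an induced triangle, while triangle-free graphs have none.

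First, I would suppose for contradiction that Algorithm~\ref{alg:look ahead} without the look-ahead fails to return a maximum stable set of $G$. By the telescoping identity used in the proof of Proposition~\ref{prop:weight_hold}, if $\calV_{\LP}(I) = \calV_{\LP}(I') + w_i$ holds in every selecting iteration, then the returned set has total weight exactly $\alpha(N)$ and is a maximum stable set. Hence failure forces some iteration to contain a selected vertex $v^* \in I$ with $\calV_{\LP}(I) > \calV_{\LP}(I') + w_i$, that is, a bad choice. Since the look-ahead is absent, $v^*$ is then permanently committed to $S$, so the bad choice actually arises in the execution.

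Next, I would invoke Proposition~\ref{prop:house_structure} on this iteration: bipartite graphs are perfect, so the hypotheses are satisfied, and we conclude that $G|_I$ contains a house as an induced subgraph. By the definition of a house, the top vertex $v$ together with two consecutive hole vertices $u_1,u_2$ induces a triangle inside $G|_I$, and therefore inside $G$. This directly contradicts the assumption that $G$ is triangle-free, completing the proof.

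The only mildly delicate point is that Proposition~\ref{prop:house_structure} is formally stated for Algorithm~\ref{alg:look ahead} with the look-ahead block, while we are analyzing the variant without it. However, inspection of that proof shows it only uses the tentative-selection and clique-shrinking steps~\ref{alg_step:select}-\ref{alg_step:discard}, which are identical in both versions, so the conclusion transfers verbatim. There is no real technical obstacle; the content of the corollary is essentially the observation \emph{bipartite implies house-free}.
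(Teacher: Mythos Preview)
Your proof is correct and follows essentially the same approach as the paper: derive the result from Proposition~\ref{prop:house_structure} via the observation that a house contains a triangle, so triangle-free graphs are house-free, whence no bad choice can occur and the telescoping argument of Proposition~\ref{prop:weight_hold} yields optimality. Your added remark that Proposition~\ref{prop:house_structure} only relies on steps~\ref{alg_step:select}--\ref{alg_step:discard} (and hence carries over to the no-look-ahead variant) is a fair technical observation that the paper leaves implicit.
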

More generally, in the proof of Proposition~\ref{prop:house_structure}, the only perfect graph property we use is the fact that it does not contain an odd hole.
Therefore, Algorithm \ref{alg:look ahead} is guaranteed to return an MWSS if $\calV_{\LP}$ is tight and the graph does not contain a house or an odd hole.

For house-free graphs, since there is no bad choice for each iteration, Algorithm~\ref{alg:look ahead} returns an MWSS without the look-ahead.
However, without the look-ahead, Algorithm~\ref{alg:look ahead} can indeed fail to produce an MWSS if a perfect graph contains a house; the following example discusses this in more detail. However, we do not currently know whether the look-ahead is in fact necessary for generalized split, chordal or co-chordal graphs, as the instance in the example is not in any of these families.

\begin{example}
\label{ex:house-ct}
Consider the perfect graph $G$ in Figure \ref{fig:c-fig-0} with a cardinality objective, where the yellow cliques with $\mu_C^*=1$ are the essential cliques determined by a strictly complementary solution of~\eqref{LP-D};
in this case, $\mu^*$ is an extreme point and the unique dual optimal solution.
We show that this graph is perfect but not generalized split, chordal or co-chordal at the end of this section in Proposition \ref{prop:ct-ngsgchordalcochordal}.
Assume Algorithm~\ref{alg:look ahead} omits the look-ahead (steps \ref{alg_step:lookahead start}-\ref{alg_step:lookahead end}). 
Suppose vertex $10$ is selected; then its neighbors $9,11$ are discarded (see Figure \ref{fig:c-fig-1}).
Then vertex $8$ becomes essential, so $7$ is discarded and $\{3,4\}$ becomes essential.
The resulting graph is shown in Figure \ref{fig:c-fig-2}, where the essential clique $\{1,2\}$ is not strictly essential.
Without the look-ahead, the algorithm can't discard any remaining vertex; suppose $2$ is selected and its neighbors $1,3,6$ are discarded.
Vertices $4,5$ become essential and are also each others' neighbors; see Figure \ref{fig:c-fig-3}. One of the vertices must be discarded; if $5$ is discarded, the algorithm returns the stable set $S=\{2,4,8,10\}$. 
However, $\alpha = 5$;
there are $5$ essential cliques in $G$, but $S$ does not contain a vertex from $\{5,6\}$. 
    
Next, suppose we use the look-ahead; for the iteration starting at Figure \ref{fig:c-fig-2}, $I=\{1,\ldots,6\}$, $2$ is selected, and $I'=\{4\}$, as in Figure \ref{fig:c-fig-3}.
Then we have $3=\calV_{\LP}(I)>\calV_{\LP}(I')+w_2=2$, so Algorithm~\ref{alg:look ahead} goes back to $I$ and discards $2$, as shown in Figure \ref{fig:c-fig-4}; Algorithm~\ref{alg:look ahead} can select an arbitrary vertex, say $3$, and return an MWSS $\{1,3,5,8,10\}$, as shown in Figure \ref{fig:c-fig-5}.
    \begin{figure}[htb]
     \begin{subfigure}[t]{0.3\textwidth}
         \centering
         \includegraphics[width=\textwidth, height=2cm]{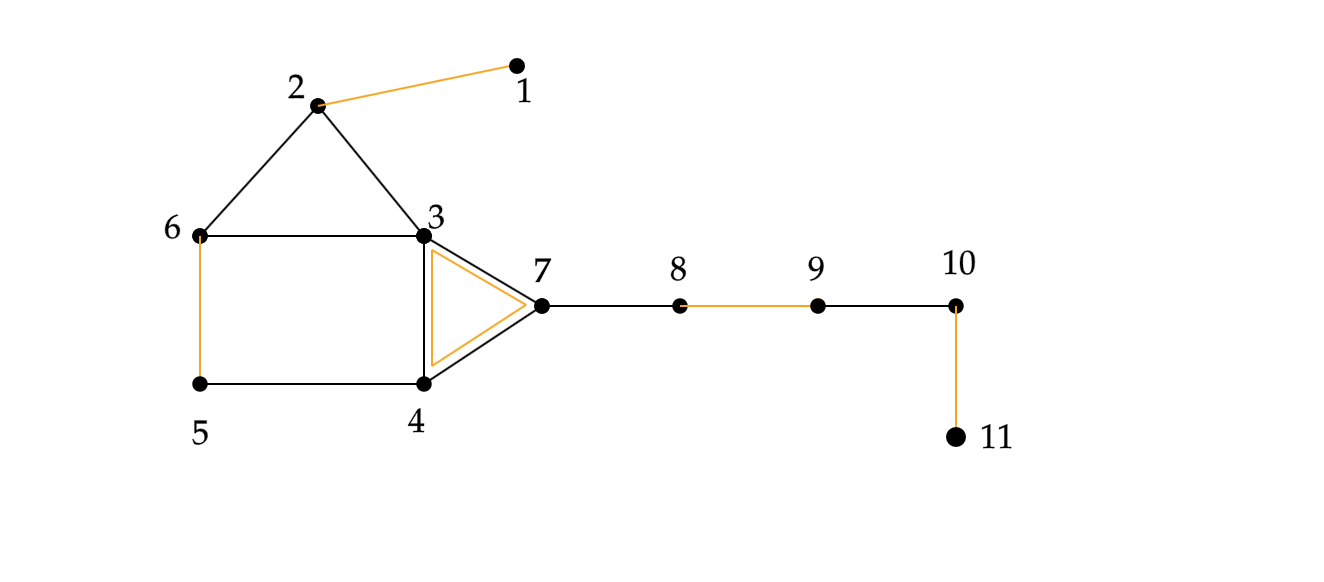}
         \caption{}
        \label{fig:c-fig-0}
     \end{subfigure}
     \begin{subfigure}[t]{0.3\textwidth}
         \centering
         \includegraphics[width=\textwidth, height=2cm]{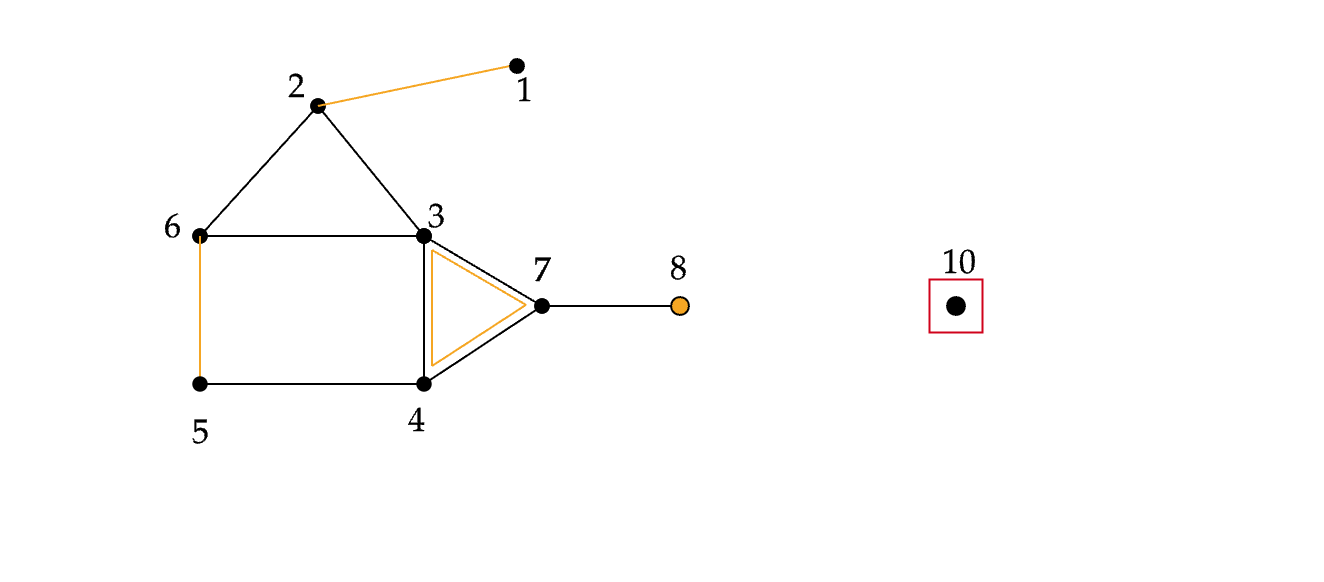}
         \caption{}
        \label{fig:c-fig-1}
     \end{subfigure}
     \begin{subfigure}[t]{0.3\textwidth}
         \centering
         \includegraphics[width=\textwidth, height=2cm]{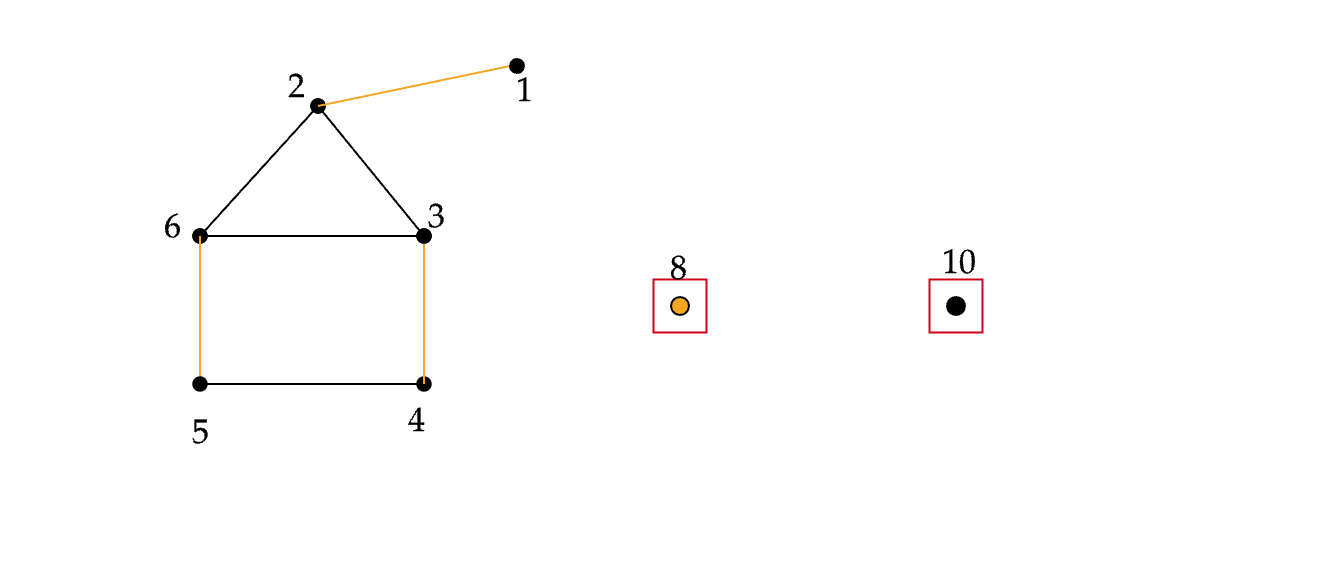}
         \caption{}
        \label{fig:c-fig-2}
     \end{subfigure}
     \begin{subfigure}[t]{0.3\textwidth}
         \centering
         \includegraphics[width=\textwidth, height=2cm]{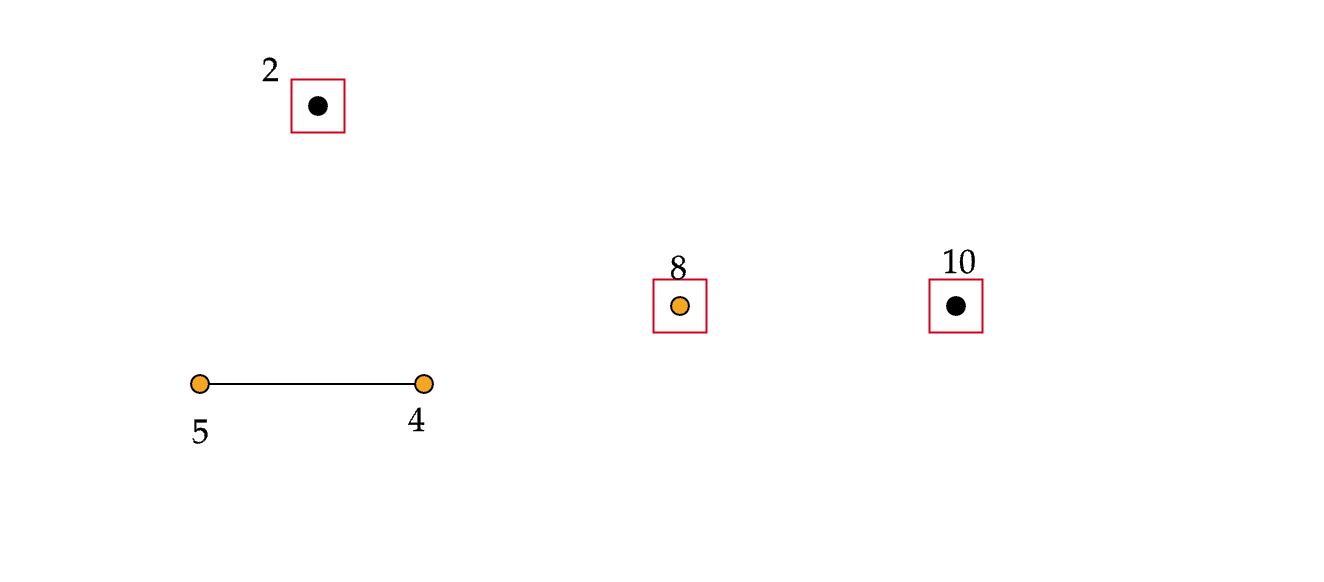}
                 \caption{}
        \label{fig:c-fig-3}
     \end{subfigure}
     \begin{subfigure}[t]{0.3\textwidth}
         \centering
         \includegraphics[width=\textwidth, height=2cm]{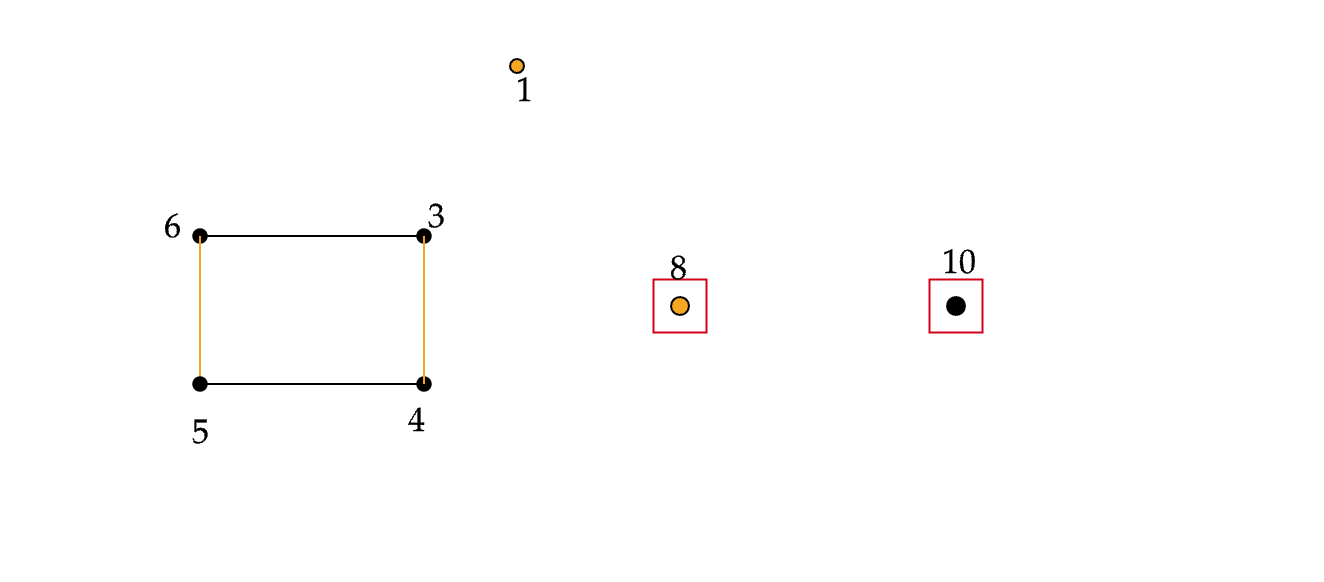}
         \caption{}
         \label{fig:c-fig-4}
     \end{subfigure}
     \begin{subfigure}[t]{0.3\textwidth}
         \centering
         \includegraphics[width=\textwidth, height=2cm]{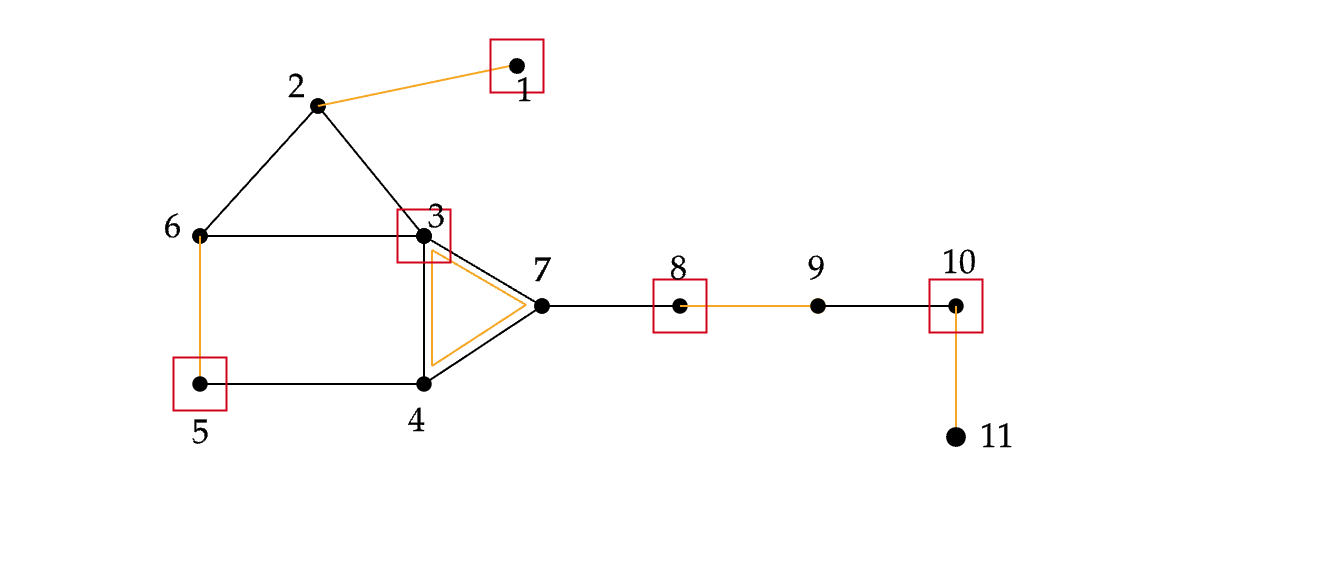}
            \caption{}
         \label{fig:c-fig-5}
     \end{subfigure}
        \centering
        \caption{}
    \end{figure}

In this example, the sub-graph induced by $\{1,\ldots,6\}$ is a house, and $2$ is a bad choice. 
In particular, $2$ and $10$ cannot be in the same MWSS, so $2$ must be discarded once $10$ is selected.
In the next section, we show that Algorithm~\ref{alg:look ahead} with $\calV_{\SDP}$ can discard any vertex that the algorithm discards with $\calV_{\LP}$ while on an optimal trajectory; therefore, a bad house is also necessary for it to fail.
Indeed, Algorithm~\ref{alg:look ahead} using $\calV_{\SDP}$ without the look-ahead can also fail on this instance, by selecting $2$ after $10$. 

The example also highlights the importance of using a solution from the relative interior of the optimal face to construct $\calV_{\LP}$. The three cliques identified in Figure \ref{fig:c-fig-2} are indeed essential for the sub-graph induced by $\{1, \dotsc, 6\}$, but they aren't the only essential cliques. The optimal dual solution that assigns unit weight to each of those cliques is optimal for this sub-graph, but it is an extreme point of \eqref{LP-D} and not in the relative interior of the optimal face. 
\end{example}

As indicated in Example~\ref{ex:house-ct}, Lemma~\ref{lem:look ahead} and the proof of Proposition~\ref{prop:weight_hold}, when $\alpha(I)>\alpha(I')+w_i$, $i$ is not on an optimal trajectory and should be discarded. 
In the following subsection we show that, for generalized split, chordal and co-chordal graphs, Algorithm~\ref{alg:look ahead} can detect and discard any such bad choice. 
    

\subsection{Look-Ahead}
\label{subsec:look-ahead}
We next prove Theorem \ref{thm:main_LP}, which guarantees the algorithm's performance on generalized split, chordal and co-chordal graphs.  
For co-unipolar graphs, Theorem~\ref{thm:main_LP} requires the algorithm to be run at most twice, starting from two neighbors. Intuitively, this ensures that at least one run of the algorithm starts from a member of one of the graph's clusters (and not its center). This ensures we obtain an MWSS, because after selecting this vertex, the remaining graph is bipartite, and we apply Proposition \ref{prop:house_structure}.

\begin{proof}[Proof of Theorem~\ref{thm:main_LP}\ref{thm:main:counipolar}]
Consider $G$ after \PI, which is still co-unipolar. 
For a co-unipolar graph $G$, we can separate $G$ into a center $A$ and clusters $B_1,\ldots,B_k$. 
Let $S=\{v_1,\ldots,v_t\}$ be the returned stable set and $v_i$ be the $i$-th selected vertex.
Without loss of generality, suppose $v_1\in B_1$; then all $B_2,\ldots, B_k$ and some vertices in $A$ are discarded, so $G|_{N\setminus (v_1\cup\delta_{v_1})}$ is a bipartite graph and $S$ is an MWSS by Proposition~\ref{prop:house_structure}. 

Now suppose $v_1\in A$;
without loss of generality, assume it has at least one neighbor $u$, which is in $B_1\cup\ldots \cup B_k$. 
If $S$ is an MWSS, we are done; otherwise, we run \PII~again, and select $u\in B_1\cup\ldots \cup B_k$ in the first iteration.
Then we are back in the previous case.   
\end{proof}

For unipolar graphs, we apply the next lemma, which shows that in every iteration of \PII~of Algorithm~\ref{alg:look ahead}, we can stay on an optimal trajectory.

\begin{lemma}
\label{lem:unipolar}
    Let $G$ be a unipolar graph.
    For each iteration of \PII of Algorithm~\ref{alg:look ahead} starting with the set of vertices $I$, there exists a vertex $u\in I$ such that 
    \(\calV_{\LP}(I)=\calV_{\LP}(I')+w_u,\)
    where $I'$ is the set of remaining vertices after selecting $u$ and step~\ref{alg_step:discard}.
\end{lemma}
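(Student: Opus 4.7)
My plan is to exploit the inductive hypothesis that Algorithm~\ref{alg:look ahead} is on an optimal trajectory at the start of every iteration of~\PII. Under this hypothesis, Lemma~\ref{lem:fund_VFA_prop}\ref{item:fund_VFA_prop_3} yields the key tightness $\calV_{\LP}(I) = \alpha(I)$.

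I would then pick $u \in I$ to be any vertex that belongs to some maximum stable set of $G|_I$; such a vertex exists because $G|_I$ is non-empty with strictly positive weights. For this choice, the decomposition $\alpha(I) = \alpha(I \setminus (\{u\} \cup \delta_u)) + w_u$ holds. Combining this with the VFA defining inequality \eqref{eq:valuefun}, which gives $\calV_{\LP}(I \setminus (\{u\} \cup \delta_u)) \leq \calV_{\LP}(I) - w_u$, and with the lower bound $\calV_{\LP} \geq \alpha$ from Lemma~\ref{lem:VFA_Bound}, the squeeze
\[
\alpha(I) - w_u \;\leq\; \calV_{\LP}(I \setminus (\{u\} \cup \delta_u)) \;\leq\; \calV_{\LP}(I) - w_u \;=\; \alpha(I) - w_u
\]
forces $\calV_{\LP}(I \setminus (\{u\} \cup \delta_u)) = \calV_{\LP}(I) - w_u$.

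The remaining step is to account for the inner discard loop in step~\ref{alg_step:discard}. I would argue it preserves the value of $\calV_{\LP}$: each discarded $j$ satisfies the strict discard condition, so by Lemma~\ref{lem:fund_VFA_prop}\ref{item:fund_VFA_prop_1} (applicable because $\calV_{\LP}$ remains tight on the current set) it lies in no maximum stable set, and $\alpha$ is unchanged; Lemma~\ref{lem:fund_VFA_prop}\ref{item:fund_VFA_prop_2} then preserves the identity $\calV_{\LP} = \alpha$ after the deletion, keeping the value constant. Iterating across all discards gives $\calV_{\LP}(I') = \calV_{\LP}(I \setminus (\{u\} \cup \delta_u)) = \calV_{\LP}(I) - w_u$, which is the claimed equality.

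The main obstacle, and where the unipolar structure presumably becomes essential, is justifying the inductive optimal-trajectory hypothesis: since Algorithm~\ref{alg:look ahead} picks its candidate arbitrarily, one must rule out the pathological scenario where a vertex $u'$ outside every maximum stable set of $G|_I$ nonetheless satisfies the VFA equality, which would fool the look-ahead step and drive the algorithm off the trajectory. I expect the companion material in Section~\ref{sec:LPVFA} to use the unipolar structure, together with the essential-clique combinatorial interpretation of $\calV_{\LP}$ developed earlier, to exclude this possibility and thereby close the induction that the current lemma initiates.
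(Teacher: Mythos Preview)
Your argument is circular. You assume the algorithm is on an optimal trajectory at the start of the iteration and derive $\calV_{\LP}(I)=\alpha(I)$, but the lemma carries no such hypothesis, and crucially it is \emph{invoked} in the proof of Theorem~\ref{thm:main_LP}\ref{thm:main:unipolar} precisely at an iteration $J$ reached \emph{after} a suboptimal vertex has been accepted (i.e., off trajectory), in order to derive a contradiction. So the lemma must hold unconditionally for any $I$ the algorithm can reach, not only along an optimal trajectory; you cannot defer ``staying on trajectory'' to other material, because Lemma~\ref{lem:unipolar} is exactly the tool used to establish that.

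A second symptom that the plan misses the point: your squeeze argument uses nothing about unipolar graphs. Under the optimal-trajectory hypothesis, choosing $u$ in a maximum stable set of $G|_I$ and squeezing yields $\calV_{\LP}(I)=\calV_{\LP}(I')+w_u$ for \emph{any} perfect graph with a tight VFA. But Example~\ref{ex:ct-lookahead} exhibits a perfect graph and an iteration at which every vertex is a bad choice, so the unconditional statement fails in general. The unipolar hypothesis is therefore doing essential work, and it must enter the proof of the lemma itself.

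The paper's proof does not assume optimal trajectory. It argues by contradiction: if every $u\in I$ gave strict inequality, this would in particular hold for any $u$ in the center $A_I$. Since $u$ survived the previous discard loop, $\calV_{\LP}(I)=\calV_{\LP}(I\setminus(\{u\}\cup\delta_u))+w_u$, so the strict drop must occur during step~\ref{alg_step:discard}; via the essential-clique interpretation this forces two disjoint essential cliques into a single cluster, each in the other's neighborhood. The unipolar structure (a complete center plus disjoint complete clusters) is then used to transport this obstruction back to the very first iteration and contradict the fact that, after \PI, $u$ lies in some maximum stable set of $G$. With $A_I=\emptyset$ thus forced, any cluster vertex $u$ trivially gives $I'=I\setminus B_I^i$ and hence equality. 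This combinatorial analysis is where the real content lies; your sketch bypasses it entirely.
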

\begin{proof}
    Consider $G$ after \PI; every vertex in $G$ belongs to an MWSS and $\calV_{\LP}$ is constructed from a strictly complementary optimal solution, so the first iteration of \PII~does not use the look-ahead. 
    Let $A_N$ be the center and $B_N^1,\ldots,B_N^k$ be the clusters of $G$.
    If in every iteration of \PII\ we have $\calV_{\LP}(N)=\alpha(N)=\calV_{\LP}(I)+\sum_{i\in S}w_i$, then an MWSS is returned.
    
    For the sake of contradiction, suppose that in some iteration starting with vertex set $I$, we have $\calV_{\LP}(I)>\calV_{LP}(I')+w_u$ for every $u\in I$, where $I'$ is the set of vertices remaining after $u$ is selected and vertices are discarded in step \ref{alg_step:discard}. 
    Let $A_I\subseteq A_N,B_I^1\subseteq B_N^1, B_I^2,\subseteq B_N^2,\ldots,B_I^k\subseteq B_N^k$ be the center and clusters of $G|_I$.
    \begin{claim}
        If $u\in A_I$, then $\calV_{\LP}(I)=\calV_{\LP}(I')+w_u$.
    \end{claim}
    \begin{proof}
        Suppose there is some $u\in A_I$ with $\calV_{\LP}(I)>\calV_{\LP}(I')+w_u$.
        Since $u$ has not been discarded, $ \calV_{\LP}(I) = \calV_{\LP}(I\setminus (u\cup\delta_u)) + w_u$. 
        In other words, $\calV_{\LP}(I\setminus (u\cup\delta_u))>\calV_{\LP}(I')$, which implies that some essential clique of $I$ in $I\setminus (u\cup\delta_u\cup I')$ is discarded.
        That is, after $u$ is selected and $\delta_u$ is discarded, there are two disjoint essential cliques, where each is contained in the other's neighbor set, and the algorithm discards one. 
        
        Let $C_1,C_2$ be these two essential cliques with $C_1 \cap C_2 = \emptyset $.
        Since $u\in A_I$, $C_1,C_2$ are not in $A_I$ and they belong to the same cluster, say $B_I^1$.
        Neither is an essential clique of $G \vert_I $ nor an essential clique of $G$ contained in $B_N^1$, otherwise, the other would have been previously discarded. 
        That is, there are two strict essential cliques $K_1,K_2$ of $G$ such that $C_1 \subsetneq K_1, C_2 \subsetneq K_2$ and $u \notin K_1\cup K_2$.
        
        Suppose $u$ is selected in the first iteration from $N$, and $\delta_u$ is discarded.
        Then consider $C'_1:=(B_N^1\setminus\delta_u)\cap K_1,C'_2:=(B_N^1\setminus\delta_u)\cap K_2$. 
        Without loss of generality, assume that vertices not in $C_1'\cup C_2'$ with an essential clique in their neighbor sets have been discarded.
        If $C'_1 \cap C'_2 = \emptyset $, these two essential cliques are each contained in the other's  neighbor set, so $\alpha(N) > \alpha(N\setminus (u\cup\delta_u))+w_u$, which implies $u$ is not an optimal choice, and contradicts the assumption that $u$ belongs to an MWSS.
        Hence, we assume there exists $v\in C'_1\cap C'_2$, which also implies $v\notin \delta_u$. 
        However, $v\notin C_1\cap C_2=\emptyset$.
        If $v \in C_1$, since $\{v\} \cup C_2\subseteq K_2$, $\{v\} \cup C_2$ is an essential clique instead of $C_2$ and we have a contradiction, so $v\notin C_1$; similarly, $v \notin C_2$.
        
        Hence, before $C_1,C_2$ become essential, $v$ is discarded, so it contains an essential clique $C$ in its neighbor. 
        Assume $v$ is discarded in the iteration starting with $J$, where $J\supseteq I$. 
        We consider three cases:
        
            Case 1: $C\subseteq A_J$: since $u\in A_J$, either $u\in C$ or $C\subseteq \delta_u$. For the latter case, $u$ is discarded in iteration $J$, so $u\notin I$, a contradiction.
            If $u\in C$, since $C\subseteq \delta_v$ then $v\in \delta_u$, 
            a contradiction.
            
            Case 2: $C\subseteq A_J\cup B_J^1 $ with $ C\cap A_J, C\cap B_J^1\neq \emptyset:$ there exists a strictly essential clique $K$ in $A_N\cup B_N^1$ such that $K\supseteq C$ and $v\notin K$. 
            Then $K\setminus \delta_u\neq\emptyset$, otherwise $u$ is not in any MWSS.
            Also, $u\notin K$; otherwise, since $u\notin C\subseteq \delta_v$,  $C\subseteq K\setminus \{u\}\subseteq \delta_u$, so either $C$ is discarded when $u$ is selected from $I$ then $v$ is not discarded, or $u$ is discarded together with $v$ during iteration starting with $J$, a contradiction. 
            Let $C':=(B_N^1\setminus \delta_u)\cap K\subseteq\delta_v$ by $v\in B^1_N$. 
            By the properties from above, $C'\neq\emptyset$ and $C'$ is an essential clique after $u$ is selected in the first iteration, so $v$ is discarded. 
            
            Case 3: $C\subseteq B_J^1$: there exists a strictly essential clique $K$ in $A_N\cup B_N^1$ such that $K\supseteq C$, $v\notin K$.
            If $K\cap A_N\neq\emptyset$, then this case is equivalent to the previous one.
            Suppose $K\subseteq B_N^1$; then since $v\in B_N^1$, we have $K\subseteq \delta_v$, $v$ would be discarded in \PI, and $v\notin C_1'\cup C_2'$, a contradiction.
        By the three cases above, $v$ is discarded before $C_1'$, $C_2'$ are each contained in the other's neighbor set. Since $v \in C'_1\cap C'_2$ is an arbitrary vertex, we can assume $C'_1\cap C'_2=\emptyset$. 
        Hence, $\calV_{\LP}(N)>\calV_{\LP}(N\setminus (u\cup\delta_u))+w_u$ and $u$ is not an optimal choice, a contradiction.  
\end{proof}

If every vertex $u$ in $I$ has $\calV_{\LP}(I)>\calV_{\LP}(I')+w_u$, the claim implies $A_I=\emptyset$.
That is, for $u\in B_I^i\neq\emptyset$, we have $\calV_{\LP}(I)>\calV_{\LP}(I')+w_u$. 
However, since $A_I=\emptyset$, $I'=I\setminus(\delta_u\cup u)=I\setminus B_I^i$.
Since $u$ was not previously discarded, $ \calV_{\LP}(I) = \calV_{\LP}(I\setminus(\delta_u\cup u)) + w_u = \calV_{\LP}(I') + w_u$, a contradiction.
Hence, for each iteration, there is always a vertex $u$ with $\calV_{\LP}(I)=\calV_{\LP}(I')+w_u$.
 \end{proof}

For chordal graphs, we can again apply the proof of Proposition \ref{prop:house_structure} to conclude that in such graphs we do not encounter a house. Finally, for a co-chordal graph, we use the \emph{perfect elimination ordering} \cite[page 851]{pjm/1102995572} of its complement to show that the algorithm returns an MWSS.

\begin{proof}[Proof of Theorem~\ref{thm:main_LP}\ref{thm:main:unipolar}]
    Consider first the case that $G$ is unipolar.
    Suppose we are on an optimal trajectory and reach $I$.
    If $u\in I$ gives $\calV_{\LP}(I)>\calV_{\LP}(I')+w_u$, then $u$ is not optimal and the look-ahead discards this vertex. 
    If a vertex $u$ is not optimal and not discarded by the look-ahead, then
    $\alpha(I)=\calV_{\LP}(I)=\calV_{\LP}(I')+w_u>\alpha(I')+w_u$.
    For any later iteration, suppose we have vertex set $J$ remaining. If there exists $v\in J$ with $\calV_{\LP}(J)=\calV_{\LP}(J')+w_v$, select it.
    Because $u$ is a suboptimal choice, in some iteration every vertex $v\in J$ will have
    $\calV_{\LP}(J)>\calV_{\LP}(J')+w_v$, which contradicts Lemma~\ref{lem:unipolar}.
    Hence, if a suboptimal vertex is selected, the look-ahead always detects and discards it, so Algorithm~\ref{alg:look ahead} always returns an MWSS for $G$ by Proposition~\ref{prop:weight_hold}.
    
    Suppose next that $G$ is a chordal graph.
    By definition, it does not contain a house, since it cannot contain an induced cycle of length four or more.
    Then the proof of Proposition~\ref{prop:house_structure} implies that Algorithm~\ref{alg:look ahead} has $\calV_{\LP}(I)=\calV_{\LP}(I')+w_u$ for every iteration and every valid choice $u$. 
    Hence it returns an MWSS.
    
    Finally, suppose $G$ is a co-chordal graph.
    For the sake of contradiction, assume Algorithm~\ref{alg:look ahead} fails on $G$.
    That is, during some iteration starting with $I$, $u$ is selected and 
    \[
    \alpha( I ) = \calV_{\LP}(I) = \calV_{\LP}(I') + w_u > \alpha(I') + w_u.
    \]
    As in the proof of Lemma~\ref{lem:unipolar}, we can assume Algorithm~\ref{alg:look ahead} keeps picking vertices satisfying $\calV_{\LP}(I)=\calV_{\LP}(I')+w_u$ until we reach an $I$ where every $u\in I$ has $\calV_{\LP}(I)>\calV_{\LP}(I')+w_u$. 
    Consider the \emph{perfect elimination ordering} of the chordal graph $\overline{G|_{I}}$ \cite[page 851]{pjm/1102995572}, and let $v$ be the first vertex in the ordering.
    For $G|_{I}$, consider $I'=I\setminus(\delta_v\cup \{v\})$; by the perfect elimination ordering, $\overline{G|_{I'}}$ is a clique and hence $G|_{I'}$ is a stable set.
    Hence, no neighbors of a vertex in $I'$ contain an essential clique of $G|_{I'}$.
    Algorithm~\ref{alg:look ahead} does not discard any vertex after $\delta_v$ is deleted from $I$ and 
    $\calV_{\LP}(I)=\calV_{\LP}(I')+w_v$, a contradiction.
 \end{proof}

Although Theorem~\ref{thm:main_LP} only applies to certain subclasses of perfect graphs, 
Algorithm~\ref{alg:look ahead} does not require any knowledge about the input graph beyond its perfectness,
and it can be applied to any perfect graph.
Moreover, in Section \ref{sec:comp_exp} we introduce a variant of Algorithm~\ref{alg:look ahead} that can be applied to arbitrary, possibly imperfect graphs.

\subsection{Look-Ahead May Fail in Some Perfect Graphs}
\label{sec:look-ahead_fail}
 
Theorem~\ref{thm:main_LP} shows that Algorithm~\ref{alg:look ahead} returns an MWSS for generalized split, chordal and co-chordal graphs.
It is natural to wonder whether Algorithm~\ref{alg:look ahead} works for arbitrary perfect graphs.
Unfortunately, the answer is negative, as illustrated in the following example.

\begin{example}
\label{ex:ct-lookahead}
    Consider the left perfect graph in Figure \ref{fig:lookahead_ct}. 
    Proposition \ref{prop:ct-ngsgchordalcochordal} below shows that it is perfect but not generalized split, chordal or co-chordal.
    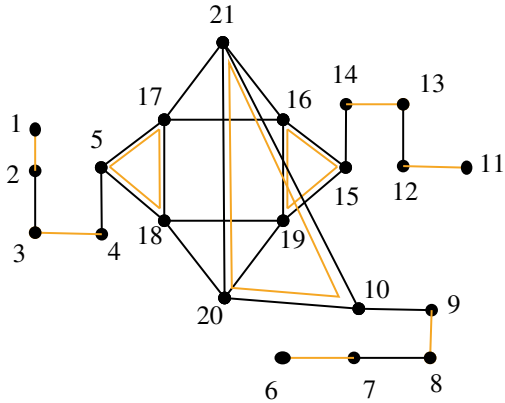
\begin{figure}[htb]
    \begin{subfigure}[t]{0.45\textwidth}
        \centering
\resizebox{0.6\textwidth}{0.5\textwidth}{\begin{tikzpicture}[x=0.6pt,y=0.6pt,yscale=-1,xscale=1]

\draw    (270.38,138.74) -- (307.28,90) ;
\draw [shift={(307.28,90)}, rotate = 307.13] [color={rgb, 255:red, 0; green, 0; blue, 0 }  ][fill={rgb, 255:red, 0; green, 0; blue, 0 }  ][line width=0.75]      (0, 0) circle [x radius= 3.35, y radius= 3.35]   ;
\draw [shift={(270.38,138.74)}, rotate = 307.13] [color={rgb, 255:red, 0; green, 0; blue, 0 }  ][fill={rgb, 255:red, 0; green, 0; blue, 0 }  ][line width=0.75]      (0, 0) circle [x radius= 3.35, y radius= 3.35]   ;
\draw    (270.38,138.74) -- (345.31,138.74) ;
\draw [shift={(345.31,138.74)}, rotate = 0] [color={rgb, 255:red, 0; green, 0; blue, 0 }  ][fill={rgb, 255:red, 0; green, 0; blue, 0 }  ][line width=0.75]      (0, 0) circle [x radius= 3.35, y radius= 3.35]   ;
\draw [shift={(270.38,138.74)}, rotate = 0] [color={rgb, 255:red, 0; green, 0; blue, 0 }  ][fill={rgb, 255:red, 0; green, 0; blue, 0 }  ][line width=0.75]      (0, 0) circle [x radius= 3.35, y radius= 3.35]   ;
\draw [color={rgb, 255:red, 0; green, 0; blue, 0 }  ,draw opacity=1 ]   (270.38,138.74) -- (270.38,202.48) ;
\draw    (270.38,202.48) -- (345.31,202.48) ;
\draw [shift={(345.31,202.48)}, rotate = 0] [color={rgb, 255:red, 0; green, 0; blue, 0 }  ][fill={rgb, 255:red, 0; green, 0; blue, 0 }  ][line width=0.75]      (0, 0) circle [x radius= 3.35, y radius= 3.35]   ;
\draw [shift={(270.38,202.48)}, rotate = 0] [color={rgb, 255:red, 0; green, 0; blue, 0 }  ][fill={rgb, 255:red, 0; green, 0; blue, 0 }  ][line width=0.75]      (0, 0) circle [x radius= 3.35, y radius= 3.35]   ;
\draw    (345.31,138.74) -- (345.31,202.48) ;
\draw    (345.31,138.74) -- (384.78,168.89) ;
\draw [shift={(384.78,168.89)}, rotate = 37.37] [color={rgb, 255:red, 0; green, 0; blue, 0 }  ][fill={rgb, 255:red, 0; green, 0; blue, 0 }  ][line width=0.75]      (0, 0) circle [x radius= 3.35, y radius= 3.35]   ;
\draw [shift={(345.31,138.74)}, rotate = 37.37] [color={rgb, 255:red, 0; green, 0; blue, 0 }  ][fill={rgb, 255:red, 0; green, 0; blue, 0 }  ][line width=0.75]      (0, 0) circle [x radius= 3.35, y radius= 3.35]   ;
\draw    (345.31,202.48) -- (384.78,168.89) ;
\draw    (384.78,168.89) -- (385,129) ;
\draw [shift={(385,129)}, rotate = 270.32] [color={rgb, 255:red, 0; green, 0; blue, 0 }  ][fill={rgb, 255:red, 0; green, 0; blue, 0 }  ][line width=0.75]      (0, 0) circle [x radius= 3.35, y radius= 3.35]   ;
\draw [shift={(384.78,168.89)}, rotate = 270.32] [color={rgb, 255:red, 0; green, 0; blue, 0 }  ][fill={rgb, 255:red, 0; green, 0; blue, 0 }  ][line width=0.75]      (0, 0) circle [x radius= 3.35, y radius= 3.35]   ;
\draw [color={rgb, 255:red, 245; green, 166; blue, 35 }  ,draw opacity=1 ]   (385,129) -- (421,129) ;
\draw [shift={(421,129)}, rotate = 0] [color={rgb, 255:red, 245; green, 166; blue, 35 }  ,draw opacity=1 ][fill={rgb, 255:red, 245; green, 166; blue, 35 }  ,fill opacity=1 ][line width=0.75]      (0, 0) circle [x radius= 3.35, y radius= 3.35]   ;
\draw    (421,129) -- (421,168) ;
\draw [shift={(421,168)}, rotate = 90] [color={rgb, 255:red, 0; green, 0; blue, 0 }  ][fill={rgb, 255:red, 0; green, 0; blue, 0 }  ][line width=0.75]      (0, 0) circle [x radius= 3.35, y radius= 3.35]   ;
\draw [shift={(421,129)}, rotate = 90] [color={rgb, 255:red, 0; green, 0; blue, 0 }  ][fill={rgb, 255:red, 0; green, 0; blue, 0 }  ][line width=0.75]      (0, 0) circle [x radius= 3.35, y radius= 3.35]   ;
\draw    (345.31,138.74) -- (307.28,90) ;
\draw [shift={(307.28,90)}, rotate = 232.04] [color={rgb, 255:red, 0; green, 0; blue, 0 }  ][fill={rgb, 255:red, 0; green, 0; blue, 0 }  ][line width=0.75]      (0, 0) circle [x radius= 3.35, y radius= 3.35]   ;
\draw [shift={(345.31,138.74)}, rotate = 232.04] [color={rgb, 255:red, 0; green, 0; blue, 0 }  ][fill={rgb, 255:red, 0; green, 0; blue, 0 }  ][line width=0.75]      (0, 0) circle [x radius= 3.35, y radius= 3.35]   ;
\draw [color={rgb, 255:red, 245; green, 166; blue, 35 }  ,draw opacity=1 ]   (421,168) -- (461,169) ;
\draw  [fill={rgb, 255:red, 0; green, 0; blue, 0 }  ,fill opacity=1 ] (464.01,169) .. controls (464.01,166.86) and (462.66,165.12) .. (461,165.12) .. controls (459.34,165.12) and (457.99,166.86) .. (457.99,169) .. controls (457.99,171.14) and (459.34,172.88) .. (461,172.88) .. controls (462.66,172.88) and (464.01,171.14) .. (464.01,169) -- cycle ;
\draw [color={rgb, 255:red, 245; green, 166; blue, 35 }  ,draw opacity=1 ]   (347.98,144.77) -- (347.98,194.73) ;
\draw [color={rgb, 255:red, 245; green, 166; blue, 35 }  ,draw opacity=1 ]   (347.98,144.77) -- (379.43,168.4) ;
\draw [color={rgb, 255:red, 245; green, 166; blue, 35 }  ,draw opacity=1 ]   (347.98,194.73) -- (379.43,168.4) ;

\draw    (270.38,138.74) -- (230.62,168.89) ;
\draw [shift={(230.62,168.89)}, rotate = 142.83] [color={rgb, 255:red, 0; green, 0; blue, 0 }  ][fill={rgb, 255:red, 0; green, 0; blue, 0 }  ][line width=0.75]      (0, 0) circle [x radius= 3.35, y radius= 3.35]   ;
\draw [shift={(270.38,138.74)}, rotate = 142.83] [color={rgb, 255:red, 0; green, 0; blue, 0 }  ][fill={rgb, 255:red, 0; green, 0; blue, 0 }  ][line width=0.75]      (0, 0) circle [x radius= 3.35, y radius= 3.35]   ;
\draw    (270.38,202.48) -- (230.62,168.89) ;
\draw    (230.62,168.89) -- (231,211) ;
\draw [shift={(231,211)}, rotate = 89.48] [color={rgb, 255:red, 0; green, 0; blue, 0 }  ][fill={rgb, 255:red, 0; green, 0; blue, 0 }  ][line width=0.75]      (0, 0) circle [x radius= 3.35, y radius= 3.35]   ;
\draw [shift={(230.62,168.89)}, rotate = 89.48] [color={rgb, 255:red, 0; green, 0; blue, 0 }  ][fill={rgb, 255:red, 0; green, 0; blue, 0 }  ][line width=0.75]      (0, 0) circle [x radius= 3.35, y radius= 3.35]   ;
\draw [color={rgb, 255:red, 245; green, 166; blue, 35 }  ,draw opacity=1 ]   (231,211) -- (189,210) ;
\draw [shift={(189,210)}, rotate = 181.36] [color={rgb, 255:red, 245; green, 166; blue, 35 }  ,draw opacity=1 ][fill={rgb, 255:red, 245; green, 166; blue, 35 }  ,fill opacity=1 ][line width=0.75]      (0, 0) circle [x radius= 3.35, y radius= 3.35]   ;
\draw    (189,210) -- (189,171) ;
\draw [shift={(189,171)}, rotate = 270] [color={rgb, 255:red, 0; green, 0; blue, 0 }  ][fill={rgb, 255:red, 0; green, 0; blue, 0 }  ][line width=0.75]      (0, 0) circle [x radius= 3.35, y radius= 3.35]   ;
\draw [shift={(189,210)}, rotate = 270] [color={rgb, 255:red, 0; green, 0; blue, 0 }  ][fill={rgb, 255:red, 0; green, 0; blue, 0 }  ][line width=0.75]      (0, 0) circle [x radius= 3.35, y radius= 3.35]   ;
\draw [color={rgb, 255:red, 245; green, 166; blue, 35 }  ,draw opacity=1 ]   (189,171) -- (189,141) ;
\draw  [fill={rgb, 255:red, 0; green, 0; blue, 0 }  ,fill opacity=1 ] (186.01,144.88) .. controls (186,142.74) and (187.34,141) .. (189,141) .. controls (190.66,141) and (192.02,142.74) .. (192.04,144.88) .. controls (192.05,147.02) and (190.71,148.75) .. (189.05,148.75) .. controls (187.39,148.75) and (186.03,147.02) .. (186.01,144.88) -- cycle ;
\draw [color={rgb, 255:red, 245; green, 166; blue, 35 }  ,draw opacity=1 ]   (267.25,144.77) -- (267.58,194.73) ;
\draw [color={rgb, 255:red, 245; green, 166; blue, 35 }  ,draw opacity=1 ]   (267.25,144.77) -- (235.96,168.4) ;
\draw [color={rgb, 255:red, 245; green, 166; blue, 35 }  ,draw opacity=1 ]   (267.58,194.73) -- (235.96,168.4) ;

\draw    (308.4,251.22) -- (270.38,202.48) ;
\draw [shift={(270.38,202.48)}, rotate = 232.04] [color={rgb, 255:red, 0; green, 0; blue, 0 }  ][fill={rgb, 255:red, 0; green, 0; blue, 0 }  ][line width=0.75]      (0, 0) circle [x radius= 3.35, y radius= 3.35]   ;
\draw [shift={(308.4,251.22)}, rotate = 232.04] [color={rgb, 255:red, 0; green, 0; blue, 0 }  ][fill={rgb, 255:red, 0; green, 0; blue, 0 }  ][line width=0.75]      (0, 0) circle [x radius= 3.35, y radius= 3.35]   ;
\draw    (345.31,202.48) -- (308.4,251.22) ;
\draw [shift={(308.4,251.22)}, rotate = 127.13] [color={rgb, 255:red, 0; green, 0; blue, 0 }  ][fill={rgb, 255:red, 0; green, 0; blue, 0 }  ][line width=0.75]      (0, 0) circle [x radius= 3.35, y radius= 3.35]   ;
\draw [shift={(345.31,202.48)}, rotate = 127.13] [color={rgb, 255:red, 0; green, 0; blue, 0 }  ][fill={rgb, 255:red, 0; green, 0; blue, 0 }  ][line width=0.75]      (0, 0) circle [x radius= 3.35, y radius= 3.35]   ;
\draw    (308.4,251.22) -- (307.28,90) ;
\draw [shift={(307.28,90)}, rotate = 269.6] [color={rgb, 255:red, 0; green, 0; blue, 0 }  ][fill={rgb, 255:red, 0; green, 0; blue, 0 }  ][line width=0.75]      (0, 0) circle [x radius= 3.35, y radius= 3.35]   ;
\draw [shift={(308.4,251.22)}, rotate = 269.6] [color={rgb, 255:red, 0; green, 0; blue, 0 }  ][fill={rgb, 255:red, 0; green, 0; blue, 0 }  ][line width=0.75]      (0, 0) circle [x radius= 3.35, y radius= 3.35]   ;
\draw    (393,258) -- (307.28,90) ;
\draw [shift={(307.28,90)}, rotate = 242.97] [color={rgb, 255:red, 0; green, 0; blue, 0 }  ][fill={rgb, 255:red, 0; green, 0; blue, 0 }  ][line width=0.75]      (0, 0) circle [x radius= 3.35, y radius= 3.35]   ;
\draw [shift={(393,258)}, rotate = 242.97] [color={rgb, 255:red, 0; green, 0; blue, 0 }  ][fill={rgb, 255:red, 0; green, 0; blue, 0 }  ][line width=0.75]      (0, 0) circle [x radius= 3.35, y radius= 3.35]   ;
\draw    (393,258) -- (308.4,251.22) ;
\draw [shift={(308.4,251.22)}, rotate = 184.58] [color={rgb, 255:red, 0; green, 0; blue, 0 }  ][fill={rgb, 255:red, 0; green, 0; blue, 0 }  ][line width=0.75]      (0, 0) circle [x radius= 3.35, y radius= 3.35]   ;
\draw [shift={(393,258)}, rotate = 184.58] [color={rgb, 255:red, 0; green, 0; blue, 0 }  ][fill={rgb, 255:red, 0; green, 0; blue, 0 }  ][line width=0.75]      (0, 0) circle [x radius= 3.35, y radius= 3.35]   ;
\draw    (393,258) -- (438.94,258.89) ;
\draw [shift={(438.94,258.89)}, rotate = 1.11] [color={rgb, 255:red, 0; green, 0; blue, 0 }  ][fill={rgb, 255:red, 0; green, 0; blue, 0 }  ][line width=0.75]      (0, 0) circle [x radius= 3.35, y radius= 3.35]   ;
\draw [shift={(393,258)}, rotate = 1.11] [color={rgb, 255:red, 0; green, 0; blue, 0 }  ][fill={rgb, 255:red, 0; green, 0; blue, 0 }  ][line width=0.75]      (0, 0) circle [x radius= 3.35, y radius= 3.35]   ;
\draw [color={rgb, 255:red, 245; green, 166; blue, 35 }  ,draw opacity=1 ]   (438.94,258.89) -- (438,289) ;
\draw [shift={(438,289)}, rotate = 91.79] [color={rgb, 255:red, 245; green, 166; blue, 35 }  ,draw opacity=1 ][fill={rgb, 255:red, 245; green, 166; blue, 35 }  ,fill opacity=1 ][line width=0.75]      (0, 0) circle [x radius= 3.35, y radius= 3.35]   ;
\draw    (438,289) -- (390,289) ;
\draw [shift={(390,289)}, rotate = 180] [color={rgb, 255:red, 0; green, 0; blue, 0 }  ][fill={rgb, 255:red, 0; green, 0; blue, 0 }  ][line width=0.75]      (0, 0) circle [x radius= 3.35, y radius= 3.35]   ;
\draw [shift={(438,289)}, rotate = 180] [color={rgb, 255:red, 0; green, 0; blue, 0 }  ][fill={rgb, 255:red, 0; green, 0; blue, 0 }  ][line width=0.75]      (0, 0) circle [x radius= 3.35, y radius= 3.35]   ;
\draw [color={rgb, 255:red, 245; green, 166; blue, 35 }  ,draw opacity=1 ]   (390,289) -- (345,289) ;
\draw  [fill={rgb, 255:red, 0; green, 0; blue, 0 }  ,fill opacity=1 ] (349.5,289) .. controls (349.5,287.07) and (347.49,285.5) .. (345,285.5) .. controls (342.51,285.5) and (340.5,287.07) .. (340.5,289) .. controls (340.5,290.93) and (342.51,292.5) .. (345,292.5) .. controls (347.49,292.5) and (349.5,290.93) .. (349.5,289) -- cycle ;
\draw  [color={rgb, 255:red, 245; green, 166; blue, 35 }  ,draw opacity=1 ] (380.37,250.39) -- (312.96,244.83) -- (311.3,102.86) -- cycle ;

\draw (233,214.4) node [anchor=north west][inner sep=0.75pt]    {$4$};
\draw (173,215.4) node [anchor=north west][inner sep=0.75pt]    {$3$};
\draw (171,133.4) node [anchor=north west][inner sep=0.75pt]    {$1$};
\draw (169,165.4) node [anchor=north west][inner sep=0.75pt]    {$2$};
\draw (222,143.4) node [anchor=north west][inner sep=0.75pt]    {$5$};
\draw (332,301.4) node [anchor=north west][inner sep=0.75pt]    {$6$};
\draw (394,301.4) node [anchor=north west][inner sep=0.75pt]    {$7$};
\draw (436,297.4) node [anchor=north west][inner sep=0.75pt]    {$8$};
\draw (447,247.4) node [anchor=north west][inner sep=0.75pt]    {$9$};
\draw (394,238.4) node [anchor=north west][inner sep=0.75pt]    {$10$};
\draw (467,158.4) node [anchor=north west][inner sep=0.75pt]    {$11$};
\draw (412,176.4) node [anchor=north west][inner sep=0.75pt]    {$12$};
\draw (429,108.4) node [anchor=north west][inner sep=0.75pt]    {$13$};
\draw (374,102.4) node [anchor=north west][inner sep=0.75pt]    {$14$};
\draw (375,179.4) node [anchor=north west][inner sep=0.75pt]    {$15$};
\draw (345,118.4) node [anchor=north west][inner sep=0.75pt]    {$16$};
\draw (251,116.4) node [anchor=north west][inner sep=0.75pt]    {$17$};
\draw (251,204.4) node [anchor=north west][inner sep=0.75pt]    {$18$};
\draw (341,206.4) node [anchor=north west][inner sep=0.75pt]    {$19$};
\draw (289,254.4) node [anchor=north west][inner sep=0.75pt]    {$20$};
\draw (297,64.4) node [anchor=north west][inner sep=0.75pt]    {$21$};
\end{tikzpicture}}

        
        \end{subfigure}
\begin{subfigure}[t]{0.45\textwidth}
\centering
\resizebox{0.6\textwidth}{0.5\textwidth}{\begin{tikzpicture}[x=0.6pt,y=0.6pt,yscale=-1,xscale=1]

\draw    (270.38,138.74) -- (307.28,90) ;
\draw [shift={(307.28,90)}, rotate = 307.13] [color={rgb, 255:red, 0; green, 0; blue, 0 }  ][fill={rgb, 255:red, 0; green, 0; blue, 0 }  ][line width=0.75]      (0, 0) circle [x radius= 3.35, y radius= 3.35]   ;
\draw [shift={(270.38,138.74)}, rotate = 307.13] [color={rgb, 255:red, 0; green, 0; blue, 0 }  ][fill={rgb, 255:red, 0; green, 0; blue, 0 }  ][line width=0.75]      (0, 0) circle [x radius= 3.35, y radius= 3.35]   ;
\draw    (270.38,138.74) -- (345.31,138.74) ;
\draw [shift={(345.31,138.74)}, rotate = 0] [color={rgb, 255:red, 0; green, 0; blue, 0 }  ][fill={rgb, 255:red, 0; green, 0; blue, 0 }  ][line width=0.75]      (0, 0) circle [x radius= 3.35, y radius= 3.35]   ;
\draw [shift={(270.38,138.74)}, rotate = 0] [color={rgb, 255:red, 0; green, 0; blue, 0 }  ][fill={rgb, 255:red, 0; green, 0; blue, 0 }  ][line width=0.75]      (0, 0) circle [x radius= 3.35, y radius= 3.35]   ;
\draw [color={rgb, 255:red, 245; green, 166; blue, 35 }  ,draw opacity=1 ]   (270.38,138.74) -- (270.38,202.48) ;
\draw    (270.38,202.48) -- (345.31,202.48) ;
\draw [shift={(345.31,202.48)}, rotate = 0] [color={rgb, 255:red, 0; green, 0; blue, 0 }  ][fill={rgb, 255:red, 0; green, 0; blue, 0 }  ][line width=0.75]      (0, 0) circle [x radius= 3.35, y radius= 3.35]   ;
\draw [shift={(270.38,202.48)}, rotate = 0] [color={rgb, 255:red, 0; green, 0; blue, 0 }  ][fill={rgb, 255:red, 0; green, 0; blue, 0 }  ][line width=0.75]      (0, 0) circle [x radius= 3.35, y radius= 3.35]   ;
\draw [color={rgb, 255:red, 245; green, 166; blue, 35 }  ,draw opacity=1 ]   (345.31,138.74) -- (345.31,202.48) ;
\draw    (345.31,138.74) -- (307.28,90) ;
\draw [shift={(307.28,90)}, rotate = 232.04] [color={rgb, 255:red, 0; green, 0; blue, 0 }  ][fill={rgb, 255:red, 0; green, 0; blue, 0 }  ][line width=0.75]      (0, 0) circle [x radius= 3.35, y radius= 3.35]   ;
\draw [shift={(345.31,138.74)}, rotate = 232.04] [color={rgb, 255:red, 0; green, 0; blue, 0 }  ][fill={rgb, 255:red, 0; green, 0; blue, 0 }  ][line width=0.75]      (0, 0) circle [x radius= 3.35, y radius= 3.35]   ;
\draw    (308.4,251.22) -- (270.38,202.48) ;
\draw [shift={(270.38,202.48)}, rotate = 232.04] [color={rgb, 255:red, 0; green, 0; blue, 0 }  ][fill={rgb, 255:red, 0; green, 0; blue, 0 }  ][line width=0.75]      (0, 0) circle [x radius= 3.35, y radius= 3.35]   ;
\draw [shift={(308.4,251.22)}, rotate = 232.04] [color={rgb, 255:red, 0; green, 0; blue, 0 }  ][fill={rgb, 255:red, 0; green, 0; blue, 0 }  ][line width=0.75]      (0, 0) circle [x radius= 3.35, y radius= 3.35]   ;
\draw    (345.31,202.48) -- (308.4,251.22) ;
\draw [shift={(308.4,251.22)}, rotate = 127.13] [color={rgb, 255:red, 0; green, 0; blue, 0 }  ][fill={rgb, 255:red, 0; green, 0; blue, 0 }  ][line width=0.75]      (0, 0) circle [x radius= 3.35, y radius= 3.35]   ;
\draw [shift={(345.31,202.48)}, rotate = 127.13] [color={rgb, 255:red, 0; green, 0; blue, 0 }  ][fill={rgb, 255:red, 0; green, 0; blue, 0 }  ][line width=0.75]      (0, 0) circle [x radius= 3.35, y radius= 3.35]   ;
\draw [color={rgb, 255:red, 245; green, 166; blue, 35 }  ,draw opacity=1 ]   (308.4,251.22) -- (307.28,90) ;
\draw  [fill={rgb, 255:red, 0; green, 0; blue, 0 }  ,fill opacity=1 ] (426.5,169) .. controls (426.5,167.07) and (424.49,165.5) .. (422,165.5) .. controls (419.51,165.5) and (417.5,167.07) .. (417.5,169) .. controls (417.5,170.93) and (419.51,172.5) .. (422,172.5) .. controls (424.49,172.5) and (426.5,170.93) .. (426.5,169) -- cycle ;
\draw  [fill={rgb, 255:red, 0; green, 0; blue, 0 }  ,fill opacity=1 ] (389.5,133) .. controls (389.5,131.07) and (387.49,129.5) .. (385,129.5) .. controls (382.51,129.5) and (380.5,131.07) .. (380.5,133) .. controls (380.5,134.93) and (382.51,136.5) .. (385,136.5) .. controls (387.49,136.5) and (389.5,134.93) .. (389.5,133) -- cycle ;
\draw  [fill={rgb, 255:red, 0; green, 0; blue, 0 }  ,fill opacity=1 ] (400.5,290) .. controls (400.5,288.07) and (398.49,286.5) .. (396,286.5) .. controls (393.51,286.5) and (391.5,288.07) .. (391.5,290) .. controls (391.5,291.93) and (393.51,293.5) .. (396,293.5) .. controls (398.49,293.5) and (400.5,291.93) .. (400.5,290) -- cycle ;
\draw  [fill={rgb, 255:red, 0; green, 0; blue, 0 }  ,fill opacity=1 ] (442.5,260) .. controls (442.5,258.07) and (440.49,256.5) .. (438,256.5) .. controls (435.51,256.5) and (433.5,258.07) .. (433.5,260) .. controls (433.5,261.93) and (435.51,263.5) .. (438,263.5) .. controls (440.49,263.5) and (442.5,261.93) .. (442.5,260) -- cycle ;
\draw  [fill={rgb, 255:red, 0; green, 0; blue, 0 }  ,fill opacity=1 ] (194.5,175) .. controls (194.5,173.07) and (192.49,171.5) .. (190,171.5) .. controls (187.51,171.5) and (185.5,173.07) .. (185.5,175) .. controls (185.5,176.93) and (187.51,178.5) .. (190,178.5) .. controls (192.49,178.5) and (194.5,176.93) .. (194.5,175) -- cycle ;
\draw  [fill={rgb, 255:red, 0; green, 0; blue, 0 }  ,fill opacity=1 ] (233.5,207) .. controls (233.5,205.07) and (231.49,203.5) .. (229,203.5) .. controls (226.51,203.5) and (224.5,205.07) .. (224.5,207) .. controls (224.5,208.93) and (226.51,210.5) .. (229,210.5) .. controls (231.49,210.5) and (233.5,208.93) .. (233.5,207) -- cycle ;
\draw  [color={rgb, 255:red, 208; green, 2; blue, 27 }  ,draw opacity=1 ] (371.75,119.75) -- (398.25,119.75) -- (398.25,146.25) -- (371.75,146.25) -- cycle ;
\draw  [color={rgb, 255:red, 208; green, 2; blue, 27 }  ,draw opacity=1 ] (408.75,155.75) -- (435.25,155.75) -- (435.25,182.25) -- (408.75,182.25) -- cycle ;
\draw  [color={rgb, 255:red, 208; green, 2; blue, 27 }  ,draw opacity=1 ] (424.75,246.75) -- (451.25,246.75) -- (451.25,273.25) -- (424.75,273.25) -- cycle ;
\draw  [color={rgb, 255:red, 208; green, 2; blue, 27 }  ,draw opacity=1 ] (382.75,276.75) -- (409.25,276.75) -- (409.25,303.25) -- (382.75,303.25) -- cycle ;
\draw  [color={rgb, 255:red, 208; green, 2; blue, 27 }  ,draw opacity=1 ] (215.75,193.75) -- (242.25,193.75) -- (242.25,220.25) -- (215.75,220.25) -- cycle ;
\draw  [color={rgb, 255:red, 208; green, 2; blue, 27 }  ,draw opacity=1 ] (176.75,161.75) -- (203.25,161.75) -- (203.25,188.25) -- (176.75,188.25) -- cycle ;

\draw (195,218.4) node [anchor=north west][inner sep=0.75pt]    {$4$};
\draw (159,166.4) node [anchor=north west][inner sep=0.75pt]    {$2$};
\draw (422,305.4) node [anchor=north west][inner sep=0.75pt]    {$7$};
\draw (460,245.4) node [anchor=north west][inner sep=0.75pt]    {$9$};
\draw (450.75,157.65) node [anchor=north west][inner sep=0.75pt]    {$12$};
\draw (374,102.4) node [anchor=north west][inner sep=0.75pt]    {$14$};
\draw (345,118.4) node [anchor=north west][inner sep=0.75pt]    {$16$};
\draw (251,116.4) node [anchor=north west][inner sep=0.75pt]    {$17$};
\draw (251,204.4) node [anchor=north west][inner sep=0.75pt]    {$18$};
\draw (341,206.4) node [anchor=north west][inner sep=0.75pt]    {$19$};
\draw (289,254.4) node [anchor=north west][inner sep=0.75pt]    {$20$};
\draw (297,64.4) node [anchor=north west][inner sep=0.75pt]    {$21$};
\end{tikzpicture}}

\end{subfigure}
\caption{Look-ahead example}
\label{fig:lookahead_ct}
\end{figure}
As before, the yellow cliques represent the essential cliques given by a strictly complementary solution of~\eqref{LP-D}.
an MWSS of this graph has size $9$.
As in Example~\ref{ex:house-ct}, suppose $2,7,12$ are selected; we then obtain the right graph in Figure \ref{fig:lookahead_ct}.
It is not hard to see that any remaining vertex is a bad choice for the same reason from Example~\ref{ex:house-ct}.
Thus, even with look-ahead, Algorithm~\ref{alg:look ahead} fails on this graph, and Algorithm~\ref{alg:look ahead} with $\calV_{\SDP}$ and look-ahead also fails to obtain an MWSS.
\end{example}
By generalizing Example~\ref{ex:ct-lookahead} with longer paths, Algorithm~\ref{alg:look ahead} with a constant number of look-ahead steps can also be made to fail on perfect graphs: 
In the example, after picking $2$ and $7$, $12$ is a suboptimal choice.
By adding steps to the look ahead, we can detect that $14$ is suboptimal choice and hence also $12$ (after choosing $2$ and $7$).
However, by extending the paths $1-5$, $6-10$ and $11-15$ to paths with length $2k$ and picking vertices incident to the leaves, we need $(k+1)$ steps in the look-ahead to detect a suboptimal choice.

The next proposition verifies that this graph and the one used in Example \ref{ex:house-ct} are both perfect but not in the families covered by our results.
\begin{proposition}
\label{prop:ct-ngsgchordalcochordal}
The graphs in Examples \ref{ex:house-ct} and \ref{ex:ct-lookahead} are perfect but not generalized split, chordal, or co-chordal
\end{proposition}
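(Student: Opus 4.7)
The plan is to verify each of the four claims (perfect; not chordal; not co-chordal; not a generalized split graph) for both graphs separately, relying on explicit small induced subgraphs wherever possible.

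For \emph{not chordal}, I would exhibit an induced cycle of length at least four. In Example~\ref{ex:house-ct} the house on $\{1,\ldots,6\}$ has an induced $C_4$ as its base, and in Example~\ref{ex:ct-lookahead} the squares drawn in Figure~\ref{fig:lookahead_ct1} give induced $C_4$'s. For \emph{not co-chordal}, I would find an induced $2K_2$ in $G$, equivalently an induced $C_4$ in $\overline{G}$; two disjoint essential edges from sufficiently distant parts of each graph will work, and one verifies there are no additional edges between them.

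For \emph{perfectness}, I would appeal to the Strong Perfect Graph Theorem and argue that neither graph contains an induced odd hole or odd anti-hole of length at least five. Since both graphs are small (11 and 21 vertices respectively), this reduces to a finite case check on candidate induced subgraphs. A cleaner route is to observe that each graph is built by gluing small perfect pieces (paths, triangles, houses) along cliques, an operation that preserves perfectness, so perfectness of the whole follows from perfectness of each piece.

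For \emph{not a generalized split graph}, I must show that neither $G$ nor $\overline{G}$ is unipolar. If $G$ were unipolar with center $A$, then $G|_{\overline{A}}$ would be a disjoint union of cliques, so $G|_{\overline{A}}$ can contain no induced $P_4$ and no induced $2K_2$; equivalently, any induced $2K_2$ or $P_4$ of $G$ must have at least one vertex in $A$. For each graph I would isolate a small induced subgraph $H$ (for instance an induced $3K_2$, or a house together with a pendant edge) and, by enumerating how $V(H)$ can be split between $A$ and $\overline{A}$, show that no assignment is consistent with both $G|_A$ being complete and $G|_{\overline{A}}$ being a disjoint union of cliques. A symmetric argument applied to $\overline{G}$, using induced copies of $\overline{3K_2}$ or similar anti-structures in $G$, rules out co-unipolarity. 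The main obstacle is precisely this last step: the case analysis for ruling out unipolar partitions in both $G$ and $\overline{G}$ is the most delicate part, since the chosen witness $H$ must truly forbid every partition rather than merely the most obvious ones.
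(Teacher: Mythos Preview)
Your arguments for ``not chordal'' and ``not co-chordal'' are fine and essentially what the paper does. But your strategy for ``not generalized split'' contains a real error: you assert that if $G|_{\overline{A}}$ is a disjoint union of cliques then it contains no induced $2K_2$. This is false---a disjoint union of cliques is precisely a $P_3$-free graph, and $2K_2$ is $P_3$-free. In particular your proposed witness $3K_2$ is itself unipolar (with center $A=\emptyset$), so no enumeration of partitions of its vertex set can ever yield a contradiction. You would need a genuinely different obstruction, and the case analysis you sketch does not supply one.

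The paper avoids this difficulty entirely. For Example~\ref{ex:house-ct} it exhibits the induced subgraph on $\{2,3,6,7,8,9,10,11\}$ and cites~\cite{ESCHEN2014195}, where that specific graph is shown not to be generalized split; since the class is hereditary, the conclusion follows. For Example~\ref{ex:ct-lookahead} it simply notes that the first graph is an induced subgraph of the second, so all three negative properties transfer at once. For perfectness the paper also differs from your plan: rather than invoking the Strong Perfect Graph Theorem or a clique-sum decomposition, it gives an explicit $3$-coloring and argues directly that every proper induced subgraph has $\chi=\omega$.
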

   \begin{proof}
   Consider the graph $G$ in Example~\ref{ex:house-ct}.
       We first prove the perfectness.
        It is clear that $G$ is $3$-colorable by coloring $\{1,4,6,8,10\}$, $\{2,5,7,9,11\}$ and $\{3\}$ differently, and the clique number of $G$ is $3$.
        Then, except for $G$, the components of any other induced subgraph of $G$ are made of isolated vertices, paths, even holes or containing a triangle, so their chromatic numbers are $1,2,3$ respectively. 
        That is, the clique number of any subgraph of $G$ equals to its chromatic number, which implies the perfectness of $G$.

        It is not hard to see that any induced subgraph of a generalized split graph is also generalized split.
        Let $S=\{2,3,6,7,8,9,10,11\}$ and $G|_S$.
        $G|_S$ is shown to be not generalized split in~\cite{ESCHEN2014195}. 
        Thus, $G$ is not generalized split.

        Since $G|_{\{3,4,5,6\}}$ is an even hole, $G$ is not chordal.
        Similarly, $\bar{G}|_{\{1,2,10,11\}}$ is an even hole, so $\bar{G}$ is not chordal.
        Thus, $G$ is not chordal or co-chordal.

        Consider $G$ in Example~\ref{ex:ct-lookahead}.
        It is $3$-colorable by coloring $\{2,4,7,9,11,13,15,17,20\}$, $\{1,3,4,19,21\}$, $\{6,8,10,12,14,16,18\}$ differently.
        Similar arguments as above apply; the clique number of all induced subgraphs of $G$ equal to its chromatic number, so $G$ is perfect.

        Notice that graph in Example~\ref{ex:house-ct} is an induced subgraph of $G$, so $G$ is not generalized split.
        Similarly, it is not chordal or co-chordal.
   \end{proof}



\section{Rounding via SDP VFA}\label{sec:SDPVFA}

In the previous sections, we show that Algorithm~\ref{alg:look ahead} with $\calV_{\LP}$ returns an MWSS for generalized split graphs, chordal graphs, and co-chordal graphs. 
However, constructing $\calV_{\LP}$ requires a strictly complementary solution of~\eqref{LP-D}, which generally takes exponential time since there is a variable for each clique in~$G$. 
In this section we show that Algorithm~\ref{alg:look ahead} with $\calV_{\SDP}$ inherits the performance guarantees we obtain for $\calV_{\LP}$.
In particular, Algorithm~\ref{alg:look ahead} with $\calV_{\SDP}$ generates an MWSS for generalized split graphs, chordal graphs, and co-chordal graphs.

Throughout this section, $ G = (N, E) $ is a perfect graph; let $(x^*,\mu^*)$ denote a fixed pair of strictly complementary solutions of \eqref{LP-P}, \eqref{LP-D},
let $(\bar{x},\bar{X},\bar{q},\bar{Q})$ be a fixed tuple of optimal solutions of \eqref{SDP-P}, \eqref{SDP-D} in the relative interior of the optimal face,
and let $t^*$ be the optimal value of these problems.
Finally, let $\calV_{\LP}$ and $\calV_{\SDP}$ be the VFAs constructed from them.

\begin{theorem}
\label{thm:SDP_beat_LP}
    Let $G=(N,E)$ be a perfect graph and let $\calV_{\LP}$ and $\calV_{\SDP}$ be as defined above.
    If Algorithm~\ref{alg:look ahead} with $\calV_{\LP}$ returns an MWSS of $G$, irrespective of which vertex choices are made in line \ref{alg_step:select},
    then Algorithm~\ref{alg:look ahead} with $\calV_{\SDP}$ also returns an MWSS.
\end{theorem}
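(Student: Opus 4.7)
My plan is to reduce the theorem to a pointwise comparison between the two value function approximations. The central lemma I would prove is $\calV_{\SDP}(I)\leq\calV_{\LP}(I)$ for every $I\subseteq N$. To establish it, I would convert the LP-dual optimum $\mu^*$ into an SDP-dual-feasible $(\tilde q,\tilde Q)$ via a rank-one decomposition: for each clique $C\in\mathcal{C}(G)$, pick a vector $u_C\in\R^{1+n}$ supported on $\{0\}\cup C$ whose entries are tuned so that $\tilde M:=\sum_C\mu^*_C\,u_Cu_C^\top$ satisfies the SDP-dual equalities $\tilde M_{ii}+2\tilde M_{0i}=-w_i$ for every $i\in N$; the needed per-vertex coefficients depend on the LP-dual slacks $\sum_{C\ni i}\mu^*_C-w_i$, which strict complementarity of $\mu^*$ keeps under control. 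For any $I\subseteq N$, the truncated matrix $\tilde M^I:=\sum_{C:\,C\cap I\neq\emptyset}\mu^*_C\,u_Cu_C^\top$ is still PSD, its principal submatrix on $\{0\}\cup I$ has the same off-diagonal block as the corresponding submatrix of $\tilde M$, and its $(0,0)$ entry is exactly $\calV_{\LP}(I)$. Lemma~\ref{lem:comp equiv} then yields $(\tilde q_I)^\top(\tilde Q_I)^\dagger\tilde q_I\leq\calV_{\LP}(I)$, and the relative interior property of $(q^*,Q^*)$---whose associated PSD matrix has a column range containing that of any other dual on the optimal face---transfers the bound to $\calV_{\SDP}(I)\leq(\tilde q_I)^\top(\tilde Q_I)^\dagger\tilde q_I\leq\calV_{\LP}(I)$.

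With the pointwise inequality in hand, the rest is a coupling argument. Phase~I discards identical sets under both VFAs since $\STAB(G)=\thbody(G)$ for perfect $G$ forces $\{i:x^*_i>0\}=\{i:\bar x_i>0\}$. Inductively, suppose both coupled runs share the same $(S,I)$ at the start of an iteration and are on an optimal trajectory, so Lemma~\ref{lem:fund_VFA_prop}\ref{item:fund_VFA_prop_3} gives $\calV_{\LP}(I)=\calV_{\SDP}(I)=\alpha(I)$. Applying the pointwise inequality on $I\setminus(\{j\}\cup\delta_j)$,
\[
\calV_{\SDP}(I)-\calV_{\SDP}\bigl(I\setminus(\{j\}\cup\delta_j)\bigr)\;\geq\;\calV_{\LP}(I)-\calV_{\LP}\bigl(I\setminus(\{j\}\cup\delta_j)\bigr),
\]
so every vertex $\calV_{\LP}$ would discard in step~\ref{alg_step:discard} is also discarded by $\calV_{\SDP}$; propagating this implication through the inner loop gives $I'_{\mathrm{SDP}}\subseteq I'_{\mathrm{LP}}$ at termination, and monotonicity of $\calV_{\SDP}$ combined with the pointwise inequality applied at $I'_{\mathrm{LP}}$ extends the implication to the look-ahead test in step~\ref{alg_step:lookahead start}. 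Any extra discards $\calV_{\SDP}$ makes are benign: Lemma~\ref{lem:fund_VFA_prop}\ref{item:fund_VFA_prop_1} ensures they remove only vertices outside every maximum stable set of $G|_I$, so the optimal-trajectory invariant is preserved. Hence the $\calV_{\SDP}$ run stays on an optimal trajectory throughout, and when it terminates Corollary~\ref{cor:fund_VFA_prop} together with $\calV_{\SDP}(\emptyset)=0$ forces $\sum_{i\in S}w_i=\alpha(N)$, so the returned $S$ is a maximum stable set.

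The main obstacle I anticipate is the \emph{upgrade step} in the central lemma, namely transferring the bound $(\tilde q_I)^\top(\tilde Q_I)^\dagger\tilde q_I\leq\calV_{\LP}(I)$ from the hand-constructed $(\tilde q,\tilde Q)$---which is merely SDP-dual optimal since its objective equals $\sum_C\mu^*_C=\alpha(N)=\vartheta(G;w)$---to the specific relative-interior optimum $(q^*,Q^*)$ that defines $\calV_{\SDP}$. This requires a careful analysis of how $q_I^\top Q_I^\dagger q_I$ varies over the dual optimal face and a clean statement that relative-interior solutions give the smallest value, likely via the Schur-complement characterization of Lemma~\ref{lem:comp equiv} and the maximal-range property of relative-interior PSD optima. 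A secondary technical subtlety is propagating ``SDP discards $\supseteq$ LP discards'' cleanly through the nested loop of step~\ref{alg_step:discard}, because the intermediate $I'$ sets evolve simultaneously; maintaining $I'_{\mathrm{SDP}}\subseteq I'_{\mathrm{LP}}$ inductively requires re-invoking the pointwise inequality at each loop step and using monotonicity to cover the moments when the two runs' current sets differ.
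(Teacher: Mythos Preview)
Your high-level coupling strategy---show the SDP run discards every vertex the LP run would, then argue the SDP trajectory is a valid LP trajectory---matches the paper's reduction of Theorem~\ref{thm:SDP_beat_LP} to Lemma~\ref{thm:SDP_beat_LP2}, and your construction of $(\tilde q,\tilde Q)$ from the LP dual is exactly Lemma~\ref{lem:LPDtoSDPD}. The truncation argument giving $\tilde q_I^\top\tilde Q_I^\dagger\tilde q_I\le\calV_{\LP}(I)$ is also correct.

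The gap is the upgrade step, and it is genuine. You want $\bar q_I^\top\bar Q_I^\dagger\bar q_I\le\tilde q_I^\top\tilde Q_I^\dagger\tilde q_I$ for the relative-interior optimum $(\bar q,\bar Q)$, appealing to its maximal-range property. But the map $(q,Q)\mapsto q_I^\top Q_I^\dagger q_I$ is \emph{convex} on the dual optimal face (immediate from the Schur-complement form in Lemma~\ref{lem:comp equiv}), so a relative-interior point has no reason to minimize it; maximal range simply does not translate into a Schur-complement ordering on principal submatrices. The paper never proves your pointwise inequality $\calV_{\SDP}(J)\le\calV_{\LP}(J)$; its Lemma~\ref{thm:SDP_beat_LP2} is the strictly weaker implication $\calV_{\LP}(I)>\calV_{\LP}(J)\Rightarrow\calV_{\SDP}(I)>\calV_{\SDP}(J)$ on an optimal trajectory, which is consistent with $\calV_{\LP}(J)<\calV_{\SDP}(J)<\alpha(I)$.

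The paper's argument uses the relative-interior hypothesis quite differently. By contradiction, $\calV_{\SDP}(I)=\calV_{\SDP}(J)$ forces (via Lemma~\ref{lem:row-dep}) a row-dependence structure yielding $M_1\succeq M_2$ for two block matrices built from $\bar Q$. The inequality $\calV_{\LP}(I)>\calV_{\LP}(J)$ supplies an essential clique $\bar C$ with $\mu^*_{\bar C}>0$; the vector $p_{\bar C}$ lies in the range of the LP-built dual matrix and hence---this is the \emph{only} use of the relative-interior property---in $\range\begin{pmatrix}t^*&\bar q^\top\\\bar q&\bar Q\end{pmatrix}$. Combining $p_{\bar C}$ with a primal optimum vanishing on $\delta_I$ (available precisely because the trajectory is optimal) yields a test vector $\nu$ with $\nu^\top M_1\nu<\nu^\top M_2\nu$, contradicting $M_1\succeq M_2$. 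So the relative-interior property is invoked only to place one specific vector in $\range(\bar M)$, a much lighter requirement than ordering Schur-complement values across the entire optimal face.
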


Our main result, namely Theorem~\ref{thm:main_SDP},
is a consequence of Theorems \ref{thm:main_LP} and \ref{thm:SDP_beat_LP}.
The key idea behind the proof of Theorem~\ref{thm:SDP_beat_LP} is to show that if $S$ is on an optimal trajectory and $I \subseteq N$ is the set of remaining vertices, then for every $i\in I$,
\begin{equation}\label{eq:implication}
\calV_{\LP}(I)-\calV_{\LP}(I\!\setminus\!(\{i\}\!\cup\!\delta_i)> w_i
\implies
\calV_{\SDP}(I)-\calV_{\SDP}(I\!\setminus\!(\{i\}\!\cup\!\delta_i)> w_i.
\end{equation}
In particular,~\eqref{eq:implication} shows that $\calV_{\SDP}$ discards every vertex discarded by $\calV_{\LP}$.
Furthermore, by the first property of Lemma~\ref{lem:fund_VFA_prop}, $\calV_{\SDP}$ might discard some suboptimal vertices that $\calV_{\LP}$ does not. 

Roughly, the argument to prove \eqref{eq:implication} proceeds as follows; we provide the details below. 
When $S$ is on an optimal trajectory and $ \calV_{\LP}(I)-\calV_{\LP}(I \setminus (\{i\} \cup \delta_i) > w_i $, there is an essential clique $C$ of $G|_I$ in $\delta_i\cap I$; 
this $C$ is a subset of an essential clique $\tilde{C}$ of $G$.
Given the LP solution $\mu^*$, we construct an optimal solution $(t^*,q_{\LP},Q_{\LP})$ for~\eqref{SDP-D}, and a vector $p_{\tilde{C}}\in \R^{n}$ with $p_{\tilde{C}} \in \range(Q_{\LP})$.
Since $(\bar{q},\bar{Q})$ is in the relative interior, $p_{\tilde{C}}\in\range(Q_{\LP})\subseteq\range(\bar{Q})$.
By analyzing $(\bar{q},\bar{Q})$, we show that $p_{\tilde{C}}\in\range(\bar{Q})$ implies \( \calV_{\SDP}(I)-\calV_{\SDP}(I \setminus (\{i\} \cup \delta_i) ) >  w_i \).

\subsection{Proof of Theorem~\ref{thm:SDP_beat_LP}}
We first analyze \PI of Algorithm~\ref{alg:look ahead} with $\calV_{\SDP}$.

\begin{lemma}
\label{lem:SDP PhaseI}
    For any $i\in V$, $\bar{x}_i>0$ if and only if $i$ is in an MWSS of $G$.
\end{lemma}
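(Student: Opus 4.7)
The plan is to deduce the lemma from the fact that $(\bar x, \bar X)$ lies in the relative interior of the optimal face of \eqref{SDP-P}, combined with the tightness of the theta body for perfect graphs. The strategy is to project the optimal SDP face down to the optimal face of $\STAB(G)$, verify that the projection preserves relative interiors, and then invoke the standard polytope characterization of the relative interior in terms of strictly positive convex combinations of vertices.

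Concretely, let $F_{\SDP}$ denote the optimal face of \eqref{SDP-P}, and let $F_{\STAB}$ denote the optimal face of $\STAB(G)$ under the objective $w^\top x$, which is $\conv\{\chi_S : S \text{ is a maximum stable set of } G\}$. Let $\pi$ be the projection onto the $x$-coordinates. First I would show $\pi(F_{\SDP}) = F_{\STAB}$. For the inclusion $\pi(F_{\SDP}) \subseteq F_{\STAB}$: any $(x,X) \in F_{\SDP}$ satisfies $x \in \thbody(G) = \STAB(G)$ by \eqref{relation:perfect graph}, and $w^\top x = t^* = \alpha(G;w)$, so $x \in F_{\STAB}$. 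For the reverse inclusion: given any $x = \sum_S \lambda_S \chi_S \in F_{\STAB}$, setting $X = \sum_S \lambda_S \chi_S \chi_S^\top$ produces a feasible lift (the diagonal, edge, and PSD constraints in \eqref{SDP-P} follow directly from stability of each $S$ and from writing the PSD matrix as a nonnegative combination of rank-one PSD matrices), with $w^\top x = t^*$, so $(x,X) \in F_{\SDP}$.

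Next I would apply the standard fact that a linear map commutes with taking relative interiors of convex sets (Rockafellar, \emph{Convex Analysis}, Theorem 6.6): since $(\bar x, \bar X) \in \rint(F_{\SDP})$,
\[
\bar x = \pi(\bar x, \bar X) \in \pi(\rint(F_{\SDP})) = \rint(\pi(F_{\SDP})) = \rint(F_{\STAB}).
\]
Because $F_{\STAB}$ is a polytope whose vertices are precisely the incidence vectors $\chi_S$ of maximum stable sets, being in $\rint(F_{\STAB})$ is equivalent to admitting a representation $\bar x = \sum_S \lambda_S \chi_S$ with $\lambda_S > 0$ for every maximum stable set $S$. Therefore $\bar x_i = \sum_{S \ni i} \lambda_S$ is strictly positive exactly when $i$ lies in at least one maximum stable set, and vanishes otherwise.

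The main conceptual step is the projection argument: one must verify both that the SDP optimal face projects onto the full $\STAB$ optimal face (this uses perfection in a crucial way via $\thbody(G) = \STAB(G)$) and that the relative-interior property is transferred through the projection. The rest is essentially a polytope relative-interior argument, parallel to the LP strict-complementarity characterization used earlier in the paper for $\calV_{\LP}$.
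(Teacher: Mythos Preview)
Your proof is correct and takes a genuinely different route from the paper's. The paper argues each implication directly: for $(\Rightarrow)$ it simply observes that $\bar x\in\thbody(G)=\STAB(G)$ is optimal, hence a convex combination of maximum stable set indicators, so $\bar x_i>0$ forces $i$ into one of them; for $(\Leftarrow)$ it argues by contradiction, taking an optimal $(x^*,X^*)$ with $x^*_i=1$ and noting that, since $(\bar x,\bar X)$ is in the relative interior of the optimal face, the point $(\bar x,\bar X)-\epsilon(x^*-\bar x,X^*-\bar X)$ remains in the optimal face for small $\epsilon>0$, yet has $i$-th coordinate negative, violating feasibility.

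Your approach instead identifies $\pi(F_{\SDP})=F_{\STAB}$ and pushes the relative-interior assumption through the projection via Rockafellar's Theorem~6.6, then reads off the conclusion from the vertex description of $\rint(F_{\STAB})$. This is more structural and makes transparent that the lemma is really the SDP analogue of the LP strict-complementarity statement; the paper's argument is shorter and entirely self-contained, requiring no external convex-analysis results. Both rely on perfection only through $\thbody(G)=\STAB(G)$.
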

\begin{proof} $(\implies)$ Since $G$ is a perfect graph, by~\eqref{relation:perfect graph} and the optimality of $\bar{x}$, there exists an MWSS of $G$ including $i$.
$(\impliedby) $ For the sake of contradiction, suppose $\bar{x}_i=0$. 
By~\eqref{relation:perfect graph}, there exists an optimal solution $(x^*,X^*)$ such that $x^*_i=1$.
Then, for any $\lambda\in\R$, $(\bar{x},\bar{X})+\lambda(x^*-\bar{x},X^*-\bar{X})$ is in the affine hull of the optimal face of~\eqref{SDP-D}.
By $(\bar{x},\bar{X})$ being in the relative interior of the optimal face, $(\tilde{x},\tilde{X})=(\bar{x},\bar{X})-\epsilon(x^*-\bar{x},X^*-\bar{X})$ is in the optimal face for $\epsilon>0$ small enough.
But $\tilde{x}_i=\bar{x}_i-\epsilon x^*_i<0$, which is not feasible to~\eqref{SDP-D}, contradiction.  
\end{proof}
Thus, for $\calV_{\SDP}$, Algorithm~\ref{alg:look ahead} discards all vertices that are not in any MWSS during \PI.
Hence, $\calV_{\LP}$ and $\calV_{\SDP}$ reach the same set of vertices after \PI,
and it remains to analyze \PII. 
We now show how to construct an optimal solution of~\eqref{SDP-D} from one of~\eqref{LP-D}.
\begin{lemma}
\label{lem:LPDtoSDPD}
    Consider vectors $b \in \R^N$ and $p_C \in \R^N$ for $C\in \mathcal{C}(G)$, with entries
\begin{equation}
b_{i}=
\begin{cases}
    0,&\text{ if } i =0\\
    \sum_{C\ni i}\mu^*_C-w_i,&\text{ if }i > 0
\end{cases}
\quad\text{ and }\quad
    [p_C]_i:=\begin{cases}
    \sqrt{\mu^*_C},&\text{ if }i=0\\
    -\sqrt{\mu^*_C},&\text{ if }i\in C\\
    0,&\text{ otherwise}
\end{cases}
\end{equation}
Then
\(
M := \sum_{C\in \mathcal{C}(G)}p_C p_C^\top+ \Diag(b)
\)
is optimal for~\eqref{SDP-D}.
\end{lemma}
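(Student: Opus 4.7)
The plan is to verify directly that the matrix $M$ has the block form required by \eqref{SDP-D}, that it is PSD, and that its top-left entry equals the optimal value $\vartheta(G;w)$. Throughout, I index the rows/columns of $M$ by $\{0\}\cup N$, so that the $(0,0)$ entry corresponds to the variable $t$, the $(0,i)$ entries to $q_i$, and the $(i,j)$ entries to $Q_{ij}$.

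First I would read off the block structure. For $i\in N$, the entry $M_{0i}$ equals $\sum_{C\in\mathcal{C}(G)}[p_C]_0[p_C]_i = -\sum_{C\ni i}\mu^*_C$, so setting $q_i := M_{0i}$ gives $q_i=-\sum_{C\ni i}\mu^*_C$. Then for the diagonal $M_{ii} = \sum_{C\ni i}\mu^*_C + b_i = 2\sum_{C\ni i}\mu^*_C - w_i = -2q_i - w_i$, matching the first constraint of \eqref{SDP-D}. For $i\neq j$ in $N$ with $ij\notin E$, no clique of $G$ contains both $i$ and $j$, so $M_{ij}=\sum_{C\ni i,\,C\ni j}\mu^*_C = 0$, matching the second constraint. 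Finally $M_{00}=\sum_{C\in\mathcal{C}(G)}\mu^*_C + 0$, which is exactly the objective value of $(\mu^*)$ in \eqref{LP-D}.

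Next I would verify that $M\succeq 0$. Each rank-one term $p_Cp_C^\top$ is PSD, and $\Diag(b)$ is PSD because feasibility of $\mu^*$ for \eqref{LP-D} gives $\sum_{C\ni i}\mu^*_C\geq w_i$, i.e.\ $b_i\geq 0$ for all $i$. Thus $M$ is PSD as a sum of PSD matrices, and so $(t,q,Q)$ extracted from $M$ is feasible for \eqref{SDP-D}.

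Finally I would argue optimality. The objective value of this feasible solution is $t = M_{00} = \sum_{C\in\mathcal{C}(G)}\mu^*_C$, which is the optimal value of \eqref{LP-D} by the assumed optimality of $\mu^*$. Because $G$ is perfect, \eqref{relation:perfect graph} gives $\alpha(G;w)=\vartheta(G;w)$, and LP duality together with tightness of the clique relaxation give $\sum_{C}\mu^*_C = \alpha(G;w)$. Hence $t$ equals the optimal value of \eqref{SDP-D}, proving that $M$ is optimal. No real obstacle is expected: the construction of $p_C$ is engineered so that the off-diagonal bookkeeping (zero where there is no edge, and the correct sign on the first row/column) is automatic; the only thing to be careful about is the indexing convention for the extra coordinate $0$ corresponding to $t$.
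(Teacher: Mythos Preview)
Your proposal is correct and follows essentially the same approach as the paper's proof: both compute the entries of $M$ block by block to verify the linear constraints of \eqref{SDP-D}, observe that $M$ is PSD as a sum of rank-one PSD terms plus a nonnegative diagonal (using feasibility of $\mu^*$), and then identify $M_{00}=\sum_C\mu^*_C$ with the common optimal value $t^*$ via perfectness. The only cosmetic difference is that the paper invokes the standing notation $t^*$ for the shared optimal value of all four relaxations, whereas you spell out the chain $\sum_C\mu^*_C=\alpha(G;w)=\vartheta(G;w)$ explicitly.
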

\begin{proof}
    First we verify feasibility. 
    By $\mu^*$ being feasible for~\eqref{LP-D}, we know $\sum_{C\ni i}\mu^*_C-w_i\geq 0$, so $\Diag(b)$ is PSD.
    Since each $p_C p_C^\top$ is PSD,  $M$ is PSD.
    Notice that $M_{ij}=0$ for $ij\notin E, i\neq j$,
    since no clique contains both $i,j$ and so $[p_Cp_C^\top]_{ij}=0$. 
    Consider the blocks
    \( M = \begin{psmallmatrix} t&p^\top\\ p&P \end{psmallmatrix} \).
    Since $P_{ii}=\sum_{C\ni i}\mu^*_C+\sum_{C\ni i}\mu^*_C-w_i=2\sum_{C\ni i}\mu^*_C-w_i$ and $p_i=-\sum_{C\ni i}\mu^*_C$,
    then $M$ is feasible for~\eqref{SDP-D}.
    Finally, $t=\sum_{C\in \mathcal{C}(G)}\mu_C^*=t^*$, so $M$ is optimal.
 \end{proof}
The next lemma establishes when $\calV_{\SDP}(I) = \calV_{\SDP}(S)$ for some $S\subseteq I$.
\begin{lemma}
\label{lem:row-dep}
Let
$\begin{psmallmatrix} A&C^\top\\ C&D \end{psmallmatrix}\succeq 0$,
with $A\in\symm^s$, $C\in\R^{(n-s)\times s}$ and $D\in\symm^{n-s}$,
and let
$a \in\range(A) \subset \R^s$, $d \in \R^{n-s}$.
Consider
\begin{align*}
    v_I:=\min\left\{t:\begin{psmallmatrix}
        t&a^\top&d^\top\\
        a&A&C^\top\\
        d&C&D
    \end{psmallmatrix}\succeq 0\right\}=\begin{psmallmatrix}
        a&d
    \end{psmallmatrix}\begin{psmallmatrix}
        A&C^\top\\
        C&D
    \end{psmallmatrix}^\dagger\begin{psmallmatrix}
        a\\
        d
    \end{psmallmatrix};~
    v_S:=\min\left\{t:
    \begin{psmallmatrix} t&a^\top\\ a&A \end{psmallmatrix}
    \succeq 0\right\}=a^\top A^\dagger a.
\end{align*}
Let $r=\rank(A)$ and assume that the upper $r\times r$ principal matrix is invertible.
So we may write
$
A=\begin{psmallmatrix} \tilde{A}&\tilde{C}^\top\\ \tilde{C}&\tilde{D} \end{psmallmatrix},
~
\tilde{A}\in\symm^{r}_{++},\
\tilde{C}=U\tilde{A} \in \R^{(s-r)\times r},~
\tilde{D}=U\tilde{A}U^\top \in \symm^{s-r}
$
for some $U \in \R^{(s-r)\times r}$.
Using that $a \in \range(A)$, we may also write
$
a=\begin{psmallmatrix} \tilde{a}\\ \tilde{d} \end{psmallmatrix},~
\tilde a\in\R^{r}, \tilde d = U \tilde a\in\R^{s-r},~
C=\begin{psmallmatrix} C_1&C_2 \end{psmallmatrix},~
C_1\in\R^{(n-s)\times r},
C_2\in\R^{(n-s)\times(s-r)}.
$
Then, $v_I=v_S$ if and only if there exists some $V\in\R^{(n-r)\times r}$ such that $\begin{psmallmatrix}
    \tilde{d}\\d
\end{psmallmatrix}=V\tilde{a}$, $\begin{psmallmatrix}
    \tilde{C}\\C_1
\end{psmallmatrix}=V\tilde{A}$, $\begin{psmallmatrix}
    \tilde{D}&C_2^\top\\
    C_2&D
\end{psmallmatrix}\succeq V\tilde{A}V^\top$.
\end{lemma}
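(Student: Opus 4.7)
My approach is to reduce the equality $v_I = v_S$ to PSD-feasibility of a single $(n{+}1)\times(n{+}1)$ matrix and then decode that feasibility via the generalized Schur complement. Since $\begin{pmatrix} t & a^\top \\ a & A\end{pmatrix}$ is a principal submatrix of $\begin{pmatrix} t & a^\top & d^\top \\ a & A & C^\top \\ d & C & D\end{pmatrix}$, the inequality $v_I \geq v_S$ is automatic, so equality is equivalent to PSD-ness of the ``big'' matrix
\[H \;:=\; \begin{pmatrix} v_S & a^\top & d^\top \\ a & A & C^\top \\ d & C & D\end{pmatrix}.\]

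After permuting rows and columns, I would write $H = \begin{pmatrix} P & Q^\top \\ Q & R\end{pmatrix}$ with
\[P = \begin{pmatrix} v_S & \tilde{a}^\top \\ \tilde{a} & \tilde{A}\end{pmatrix},\quad Q = \begin{pmatrix} \tilde{d} & \tilde{C} \\ d & C_1\end{pmatrix},\quad R = \begin{pmatrix}\tilde{D} & C_2^\top \\ C_2 & D\end{pmatrix}.\]
The critical observation is that $v_S = \tilde{a}^\top \tilde{A}^{-1} \tilde{a}$ makes the Schur complement of $\tilde{A}$ in $P$ vanish; hence $P \succeq 0$ has rank exactly $r$, with $\range(P) = \{(\tilde{a}^\top \tilde{A}^{-1} w,\, w^\top)^\top : w \in \R^r\}$ and kernel $\spn\{(1,-(\tilde{A}^{-1}\tilde{a})^\top)^\top\}$.

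By the generalized Schur complement criterion (cf.\ Lemma~\ref{lem:PSD and range}), $H \succeq 0$ is equivalent to the pair of conditions $\range(Q^\top) \subseteq \range(P)$ and $R \succeq Q P^\dagger Q^\top$. Applying the explicit description of $\range(P)$ column by column to $Q^\top$, the range inclusion becomes equivalent to the existence of $V \in \R^{(n-r)\times r}$ with $\begin{pmatrix}\tilde{C}\\C_1\end{pmatrix} = V \tilde{A}$ and $\begin{pmatrix}\tilde{d}\\d\end{pmatrix} = V \tilde{a}$, and since $\tilde{A}$ is invertible this $V$ is unique, namely $V = \begin{pmatrix}\tilde{C}\\C_1\end{pmatrix}\tilde{A}^{-1}$. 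Setting $Y := \begin{pmatrix} 0 \\ V^\top\end{pmatrix}$, a direct calculation verifies $P Y = Q^\top$; then the identity $Q P^\dagger Q^\top = Y^\top (P P^\dagger P) Y = Y^\top P Y$ (using $P P^\dagger P = P$) evaluates to $V \tilde{A} V^\top$, so the Schur inequality $R \succeq Q P^\dagger Q^\top$ becomes exactly $R \succeq V \tilde{A} V^\top$, which is the third stated condition.

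The main obstacle is organizational rather than conceptual: one must track the permuted block structure carefully and recognize the key coincidence that $v_S = \tilde{a}^\top \tilde{A}^{-1}\tilde{a}$ is precisely what makes $P$ singular of rank $r$ (giving the one-parameter kernel that produces the $V$ characterization). Once the form of $\range(P)$ is explicit, the identity $Q P^\dagger Q^\top = Y^\top P Y$ whenever $P Y = Q^\top$ does all the remaining work, and both directions of the ``iff'' follow simultaneously from the generalized Schur criterion.
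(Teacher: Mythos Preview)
Your proof is correct and takes a genuinely different route from the paper's. The paper treats the two directions separately: for $(\Leftarrow)$ it exhibits an explicit decomposition of the big matrix as a conjugate of $\begin{pmatrix} t & \tilde a^\top \\ \tilde a & \tilde A\end{pmatrix}$ plus a PSD remainder; for $(\Rightarrow)$ it first treats the case $\rank A = s$ by perturbing $D$ so that $S := D - CA^{-1}C^\top \succ 0$, uses the block-inverse formula to compute $v_I - v_S = \lVert S^{-1/2}(CA^{-1}a - d)\rVert^2$, and then reduces the rank-deficient case to this one. Your approach instead unifies both directions through a single application of the generalized Schur (Albert) criterion after permuting the index $0$ next to the $\tilde A$-block; the observation that $v_S = \tilde a^\top \tilde A^{-1}\tilde a$ forces $P$ to have rank exactly~$r$ is precisely what replaces the paper's case split and perturbation trick, and the identity $Q P^\dagger Q^\top = Y^\top P Y$ whenever $PY = Q^\top$ neatly collapses the Schur complement to $V\tilde A V^\top$. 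This is cleaner and avoids the somewhat ad hoc perturbation of~$D$. One small remark: Lemma~\ref{lem:PSD and range} is not the generalized Schur criterion you invoke (it only treats a rank-one direction $qq^\top$); the result you actually need is Albert's theorem, which is standard but not stated in the paper.
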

\begin{proof}\
$(\impliedby)$ Assume such a $V$ exists. 
        The inequality $v_I \geq v_S$ always holds,
        so it remains to see that $v_S \geq v_I$.
        Let $t$ be feasible for the SDP defining $v_S$, i.e.,
        $\begin{psmallmatrix} t&\tilde{a}^\top\\ \tilde{a}&\tilde{A}\\ \end{psmallmatrix}\succeq 0$. 
    Since
    \[\begin{psmallmatrix}
        t&a^\top&d^\top\\
        a&A&C^\top\\
        d&C&D
    \end{psmallmatrix}=\begin{psmallmatrix}
        1&0\\
        0&I\\
        0&V
    \end{psmallmatrix}\begin{psmallmatrix}
        t&\tilde{a}^\top\\
        \tilde{a}&\tilde{A}\\
    \end{psmallmatrix}\begin{psmallmatrix}
        1&0&0\\
        0&I&V^\top
    \end{psmallmatrix}+\begin{psmallmatrix}
        0&0&0\\
        0&0&0\\
        0&0&\begin{psmallmatrix}
    \tilde{D}&C_2^\top\\
    C_2&D
\end{psmallmatrix}-V\tilde{A}V^\top
    \end{psmallmatrix} , \]
the above matrix is PSD, so $t$ is also feasible for the SDP defining $v_I$; 
hence, $v_S \geq v_I$. 

$(\implies)$ Suppose now that $v_I=v_S$.
    Consider first the case that $r=s$, i.e., $A$ is positive definite. 
    Notice that if $v_I=v_S$, then the same still holds if we replace $D$ by $D+P$ for any PSD matrix $P$.
    Hence, we may assume that $S:=D-CA^{-1}C^\top$ is also positive definite. 
    Recall the inverse formula for a block matrix:
    \[\begin{psmallmatrix}
        A&C^\top\\
        C&D
    \end{psmallmatrix}^{-1}=\begin{psmallmatrix}
        A^{-1}+A^{-1}C^\top S^{-1}CA^{-1}&-A^{-1}C^\top S^{-1}\\
        -S^{-1}CA^{-1}&S^{-1}
    \end{psmallmatrix}.\]
    Using
    $v_I=\begin{psmallmatrix} a&d \end{psmallmatrix}
    \begin{psmallmatrix} A&C^\top\\ C&D \end{psmallmatrix}^{-1}
    \begin{psmallmatrix} a\\ d \end{psmallmatrix},
    v_S = a^\top A^{-1} a$,
    we obtain 
    \begin{align*}
        v_I-v_S&=a^\top A^{-1}C^{\top}S^{-1}CA^{-1}a-2a^\top A^{-1}C^{\top}S^{-1}d+d^\top S^{-1}d
        =\|S^{-1/2}CA^{-1}a-S^{-1/2}d\|^2.
    \end{align*}
    Since $v_I=v_S$, then $d=CA^{-1}a$. 
    The wanted matrix is
    $V=\begin{psmallmatrix} U\\ CA^{-1} \end{psmallmatrix}$.

    Consider now the case that $r = \rank(A) <s$. 
    Let
    \(v_{\tilde S}:=\min\left\{t:\begin{psmallmatrix}
        t&\tilde{a}^\top\\
        \tilde{a}&\tilde{A}\\
    \end{psmallmatrix}\succeq 0\right\}=\tilde{a}^\top \tilde{A}^{-1} \tilde{a}.\)
    By the first part of this proof,
    we get that $v_S=v_{\tilde{S}}$. 
    Since $v_I=v_{\tilde{S}}$ and $\tilde A \succ 0$,
    we are back in the positive definite case,
    and we can find a matrix $\tilde{V}\in\R^{(n-r)\times r}$.
    Letting
    $V=\begin{psmallmatrix} \tilde{C}\\ C_1 \end{psmallmatrix}\tilde{A}^{-1}
    =\begin{psmallmatrix} U\\ \tilde{V} \end{psmallmatrix}$,
    we have 
    $\begin{psmallmatrix} \tilde{d}\\d \end{psmallmatrix}=V\tilde{a}$,
    $\begin{psmallmatrix} \tilde{C}\\C_1 \end{psmallmatrix}=V\tilde{A}$,
    $\begin{psmallmatrix} \tilde{D}&C_2^\top\\ C_2&D \end{psmallmatrix}\succeq V\tilde{A}V^\top$,
    as required. 
\end{proof}

The following lemma is the key to prove Theorem~\ref{thm:SDP_beat_LP}.

\begin{lemma}
\label{thm:SDP_beat_LP2}
    Let $G=(N,E)$ and consider Algorithm~\ref{alg:look ahead} applied to $\calV_{\LP}$ and $\calV_{\SDP}$. 
    Consider an arbitrary iteration of \PII\ on an optimal trajectory, starting with the selected set of vertices $S$ and the set of remaining vertices $I$.
    Then for $J\subsetneq I$,  
    if $\calV_{\LP}(I)>\calV_{\LP}(J)$, we have $\calV_{\SDP}(I)>\calV_{\SDP}(J)$.
\end{lemma}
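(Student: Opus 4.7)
The plan has two main moves: (i) construct, from the essential clique witnessing $\calV_{\LP}(I)>\calV_{\LP}(J)$, a vector certificate lying in $\range(Q_{\LP})$ for the LP-derived optimal SDP solution $M$ of Lemma \ref{lem:LPDtoSDPD}; (ii) transfer this certificate to $\range(\bar Q)$ using the relative-interior property of $\bar M$, then convert it into the desired strict inequality via Lemma \ref{lem:row-dep}.

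For (i), the hypothesis $\calV_{\LP}(I)>\calV_{\LP}(J)$ with $J\subsetneq I$ and the definition of $\calV_{\LP}$ give an essential clique $\tilde C$ (i.e., $\mu^*_{\tilde C}>0$) with $\tilde C\cap I\neq\emptyset$ and $\tilde C\cap J=\emptyset$. Apply Lemma \ref{lem:LPDtoSDPD} to $\mu^*$ to build $M=\sum_C p_Cp_C^\top+\Diag(b)$, blocked as $\begin{pmatrix}t^*&q_{\LP}^\top\\q_{\LP}&Q_{\LP}\end{pmatrix}$. Let $\hat p_{\tilde C}:=-\sqrt{\mu^*_{\tilde C}}\,\mathbf{1}_{\tilde C}\in\R^N$ be the $N$-restriction of $p_{\tilde C}$. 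Since $Q_{\LP}=\sum_C\hat p_C\hat p_C^\top+\Diag(b_N)$ is a sum of PSD terms, each summand's range is contained in $\range(Q_{\LP})$; in particular $\hat p_{\tilde C}\in\range(\hat p_{\tilde C}\hat p_{\tilde C}^\top)\subseteq\range(Q_{\LP})$. Moreover, a direct computation using the rank-one decomposition shows that cliques $C$ with $C\cap S=\emptyset$ contribute only $\mu^*_C e_0 e_0^\top$ to $M|_{\{0\}\cup S}$, which --- combined with $b_i=0$ for $i\in I$ from strict complementarity and Phase~I (Lemma \ref{lem:SDP PhaseI}) --- certifies $\calV_{\SDP}^M(S)\le\calV_{\LP}(S)$ for every $S\subseteq I$, and hence, at $S=I$ on the optimal trajectory, the pinch $\alpha(I)\le\calV_{\SDP}^M(I)\le\calV_{\LP}(I)=\alpha(I)$ yields equality.

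For (ii), $\bar M$ being in the relative interior of the optimal face of~\eqref{SDP-D} while $M$ is optimal gives $\epsilon>0$ with $(1+\epsilon)\bar M-\epsilon M\succeq 0$, hence $\Null(\bar M)\subseteq\Null(M)$. Restricting to null vectors of the form $(0,u)$ and taking orthogonal complements, this yields $\range(Q_{\LP})+\R\,q_{\LP}\subseteq\range(\bar Q)+\R\,\bar q$; in particular $\hat p_{\tilde C}=\bar Q y+\lambda\bar q$ for some $y,\lambda$. A further Schur-complement argument --- exploiting that on the optimal trajectory $\calV_{\SDP}^M(I)=\calV_{\SDP}^{\bar M}(I)=\alpha(I)$ (tightness of both VFAs plus Lemma \ref{lem:fund_VFA_prop}(iii)), together with Lemma \ref{lem:PSD and range} --- lets me absorb the $\bar q$-component and conclude $\hat p_{\tilde C}\in\range(\bar Q)$, and thus $\hat p_{\tilde C}|_I\in\range(\bar Q_I)$. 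Finally, since $\hat p_{\tilde C}|_I$ is supported on $\tilde C\cap I\subseteq I\setminus J$ and vanishes on $J$, the equality characterization in Lemma \ref{lem:row-dep} (applied to the partition $I=J\cup(I\setminus J)$ with $a=\bar q_J$, $A=\bar Q_J$, $d=\bar q_{I\setminus J}$, and the relevant sub-blocks of $\bar Q_I$) cannot hold: no candidate matrix $V$ can produce the nonzero component of $\hat p_{\tilde C}$ on $I\setminus J$ from a vector in $\range(\bar Q_J)$, because that would force $\hat p_{\tilde C}$ to be nonzero somewhere on $J$. Therefore $\calV_{\SDP}^{\bar M}(I)>\calV_{\SDP}^{\bar M}(J)$.

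The main obstacle I anticipate is the absorption step in (ii): upgrading $\hat p_{\tilde C}\in\range(Q_{\LP})$ to $\hat p_{\tilde C}\in\range(\bar Q)$ proper, rather than just modulo $\R\,\bar q$. This will demand careful use of the optimal-trajectory equality $\calV_{\SDP}^{\bar M}(I)=\calV_{\LP}(I)$ and the joint primal-dual structure of the two strictly complementary pairs, so that $\bar q$ and $q_{\LP}$ carry compatible information about the essential cliques and the $\bar q$-correction can be eliminated cleanly.
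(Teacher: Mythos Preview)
Your overall strategy---build the clique vector $\hat p_{\tilde C}$, push it into $\range(\bar Q)$ via the relative-interior property, then extract a contradiction from the row-dependence characterization of Lemma~\ref{lem:row-dep}---is close in spirit to the paper's approach, but the final step does not work as stated. The claim that ``no candidate matrix $V$ can produce the nonzero component of $\hat p_{\tilde C}$ on $I\setminus J$ from a vector in $\range(\bar Q_J)$'' conflates the conditions of Lemma~\ref{lem:row-dep} (which constrain $\bar q$ and the first $r$ columns of $\bar Q_I$) with a statement about arbitrary vectors in $\range(\bar Q_I)$; the third condition there is the \emph{inequality} $\begin{pmatrix}\tilde D&C_2^\top\\C_2&D\end{pmatrix}\succeq V\tilde A V^\top$, which allows $\bar Q_I$ to carry arbitrary extra rank on $I\setminus J$. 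Concretely, take $I=\{1,2\}$ with a single edge, $J=\{1\}$, $\bar Q_I=\begin{pmatrix}3&2\\2&2\end{pmatrix}$, $\bar q_I=(-2,-4/3)^\top$, $w=(1,2/3)$: then $\calV_{\SDP}(I)=\calV_{\SDP}(J)=4/3$ and the row-dep conditions hold with $V=2/3$, yet $\bar Q_I$ is positive definite, so $e_2\in\range(\bar Q_I)$ is supported entirely on $I\setminus J$. Thus the implication you assert fails at the level of generality at which you invoke it; since your final step uses only local data $(\bar q_I,\bar Q_I,\hat p_{\tilde C}|_I)$, nothing in your argument rules this situation out.

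The paper closes this gap by bringing in an ingredient you do not use: a \emph{primal} optimal $\tilde x$ for \eqref{SDP-P} with $\tilde x_{\delta_I}=0$ and $\tilde x_j>0$ for some $j\in \bar C\cap I$, whose existence is exactly what ``on an optimal trajectory'' together with strict essentiality of $\bar C$ provides. The proof expands to the index set $\{0\}\cup I\cup\delta_I$, uses the $V$ from Lemma~\ref{lem:row-dep} to write $M_1\succeq M_2$ for two explicit matrices, and then builds a test vector $\nu(\gamma)=\bar y+\gamma\ell$---with $\bar y$ coming from your range certificate $p_{\bar C}\in\range(\bar M)$ and $\ell$ coming from $\tilde x$---for which $\nu(\gamma)^\top M_1\nu(\gamma)<\nu(\gamma)^\top M_2\nu(\gamma)$ for large $\gamma$. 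It is the interaction between the dual certificate and the primal witness that produces the strict sign; the dual side alone is insufficient. Incidentally, the obstacle you flag is not one: $\bar q\in\range(\bar Q)$ follows immediately from $\bar M\succeq 0$, so $\Null(\bar Q)\subseteq\Null(Q_{\LP})$ holds without any $\bar q$-absorption and $\hat p_{\tilde C}\in\range(\bar Q)$ is direct.
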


Before proving Lemma~\ref{thm:SDP_beat_LP2}, we first show that it implies Theorem~\ref{thm:SDP_beat_LP}.

\begin{proof}[Lemma~\ref{thm:SDP_beat_LP2}$\implies$ Theorem~\ref{thm:SDP_beat_LP}]
    Let $S=\{v_1,\ldots,v_t\}$ be a stable set returned by Algorithm~\ref{alg:look ahead} with $\calV_{\SDP}$, where $v_j$ is the $j$-th vertex selected in line \ref{alg_step:select}. 
    We claim that $v_1, \dots, v_t$ are also valid choices of selected vertices when using Algorithm~\ref{alg:look ahead} with $\calV_{\LP}$.
    If we prove this claim, then $S$ would be an MWSS by the assumption on Algorithm~\ref{alg:look ahead} with~$\calV_{\LP}$.

    Let $S_i=\{v_1,\ldots,v_{i-1}\}$ be the set of selected vertices
    and let $I^{\SDP}_i \subset N\setminus S_i$ be the set of remaining vertices
    before $v_i$ is selected by Algorithm~\ref{alg:look ahead} with $\calV_{\SDP}$.
    We have that $S_{i+1}=S_{i}\cup \{v_i\}$ and $I^{\SDP}_{i+1}\subseteq I^{\SDP}_{i}\setminus (v_i\cup\delta_{v_i})$.
    We will prove by induction on $i$ that Algorithm~\ref{alg:look ahead} with $\calV_{\LP}$ can select the vertices in $S_i$ for the first $i-1$ rounds,
    and that the set of remaining vertices $I_{i}^{\LP}$ is a superset of $I_{i}^{\SDP}$.
    
    For the base case, $I_{1}^{\LP}$, $I_{1}^{\SDP}$ are the sets of remaining vertices after \PI of Algorithm~\ref{alg:look ahead}.
    Since $S_1=\emptyset$ is on an optimal trajectory, the LP solution is strictly complementary, and the SDP solution is in the relative interior of the optimal face,
    by Section~\ref{sec:select and discard} and Lemma~\ref{lem:SDP PhaseI}
    we have $I_{1}^{\LP}=I_{1}^{\SDP}$. 
    Thus, Algorithm~\ref{alg:look ahead} with $\calV_{\LP}$ can select $v_1\in I_{1}^{\LP}$.
    
    Assume now that Algorithm~\ref{alg:look ahead} with $\calV_{\LP}$ selects the vertices in $S_i$,
    and let $I_{i}^{\LP} \supseteq I_{i}^{\SDP}$ be the set of remaining vertices.
    Since $v_i \in I_i^{\SDP} \subset I_i^{\LP}$,
    then we can select vertex $v_i$ in Algorithm~\ref{alg:look ahead} with~$\calV_{\LP}$.
    Notice that \eqref{eq:implication} holds,
    as it is a special case of Lemma~\ref{thm:SDP_beat_LP2}.
    Hence, after $v_i$ is selected,
    any vertex in $I_{i}^{\SDP}$ (a subset of $I_i^{\LP}$) that is discarded using $\calV_{\LP}$
    can also be discarded when using $\calV_{\SDP}$.
    It follows that $I_{i+1}^{\SDP}\subseteq I_{i+1}^{\LP}$.
\end{proof}

\begin{proof}[Proof of Lemma~\ref{thm:SDP_beat_LP2}]
For the sake of contradiction, suppose that $\calV_{\SDP}(I)=\calV_{\SDP}(J)$.
The proof strategy consists of constructing two matrices $M_1,M_2$ with $M_1\succeq M_2$ but such that $\calV_{\LP}(J)=\calV_{\LP}(I)$ implies $M_1\not{\succeq} M_2$.

Let $J'\subseteq J$ be such that $\bar{Q}_{J'}$ is positive definite and $\rank(\bar{Q}_{J'})=\rank(\bar{Q}_{J})$.
Define $\delta_{I}:=\{i\in N\setminus I:\exists j\in I, ij\in E(G)\}$.
By Lemma~\ref{lem:row-dep}, there exists a matrix $V\in\R^{|I\setminus J'|\times |J'|}$ such that
\begin{align*}
M_1:=\begin{psmallmatrix}
    t^*& \bar{q}_{J'}^{\top} &\bar{q}_{J'}^{\top} V^\top &\bar{q}_{\delta_I}^\top \\
    \multicolumn{3}{c}{R}&\bar{Q}_{J',\delta_I} \\
    V\bar{q}_{J'} &V\bar{Q}_{J'}&\bar{Q}_{I\setminus J'}&\bar{Q}_{I\setminus J',\delta_I}\\
    \bar{q}_{\delta_I} &\bar{Q}_{\delta_I,J'}&\bar{Q}_{\delta_I,I\setminus J'}&\bar{Q}_{\delta_I}
\end{psmallmatrix}\succeq M_2:=\begin{psmallmatrix}
    t^*& \bar{q}_{J'}^{\top} &\bar{q}_{J'}^{\top} V^\top &\bar{q}_{\delta_I}^\top \\
    \multicolumn{3}{c}{R}&\bar{Q}_{J',\delta_I} \\
    \multicolumn{3}{c}{VR}&\bar{Q}_{I\setminus J',\delta_I}\\
    \bar{q}_{\delta_I} &\bar{Q}_{\delta_I,J'}&\bar{Q}_{\delta_I,I\setminus J'}&\bar{Q}_{\delta_I}
\end{psmallmatrix}.
\end{align*}
where 
\(
  \bar{Q}_{I\setminus J'}\succeq V \bar{Q}_{J'}V^\top
\), \(R:=\begin{psmallmatrix}    
\bar{q}_{J'}& \bar{Q}_{J'} &\bar{Q}_{J'}V^\top\end{psmallmatrix}\)

Our goal is to show that there exists $\nu\in\R^{\{0\}\cup J'\cup (I\setminus J')\cup\delta_I}$ such that
$\nu^\top M_1\nu<\nu^\top M_2\nu$, which would be a contradiction.
We proceed to construct the vector~$\nu$.
By the way $\calV_{\LP}$ is constructed and $\calV_{\LP}(J)<\calV_{\LP}(I)$, we know that there is an essential clique $C'\in \mathcal{C}(G|_I)$ such that $C'\cap J=\emptyset$.
And there exists a strictly essential clique $\bar{C}$ of $G$ such that $C'\subseteq \bar{C}$.
With this $\bar{C}$ and $\mu^*_{\bar{C}}>0$, we construct a vector $p_{\bar{C}}\in\R^{\{0\}\cup N}$ and a matrix $M$ as in Lemma~\ref{lem:LPDtoSDPD},
where $M$  is shown to be an optimal solution of \eqref{SDP-D} over~$G$.
By definition of $M$, it is clear that 
\( p_{\bar{C}}\in\range\left(M\right) \) by ~\cite[Theorem 1]{pseudo-inv}.
Let $\mathcal{H}(G)$ be the feasible set of~\eqref{SDP-D} defined over $G$.
Since
$\begin{psmallmatrix} t^* &\bar{q}^\top\\ \bar{q}&\bar{Q} \end{psmallmatrix}$
is in the relative interior of the optimal face of $\mathcal{H}(G)$,
or equivalently, the optimal face is the minimal face of $\mathcal{H}(G)$ containing
$\begin{psmallmatrix} t^* &\bar{q}^\top\\ \bar{q}&\bar{Q} \end{psmallmatrix}$, 
by \cite[Lemma 4]{diag_dom_matrices} we have
\[
p_{\bar{C}}\in\range\left(M\right)\subseteq\range
\begin{psmallmatrix} t^* &\bar{q}^\top\\ \bar{q}&\bar{Q}\end{psmallmatrix}.
\]
For the rest of the proof, let $\Xi$ be $N\setminus(I\cup\delta_I)$,
and consider the block structure 
\[
\begin{psmallmatrix}
    t^* &\bar{q}^\top\\
    \bar{q}&\bar{Q}
\end{psmallmatrix}=\begin{psmallmatrix}
    t^*& \bar{q}_{I}^\top &\bar{q}_{\delta_I}^\top & \bar{q}_{\Xi}^\top\\
    \bar{q}_{I}& \bar{Q}_{I} &\bar{Q}_{I,\delta_I}& 0\\
    \bar{Q}_{\delta_I}&\bar{Q}_{\delta_I,I} &\bar{Q}_{\delta_I} & \bar{Q}_{\delta_I,\Xi}\\
    \bar{q}_{\Xi}^\top& 0 & \bar{Q}_{\Xi,\delta_I}& \bar{Q}_{\Xi}
\end{psmallmatrix}.
\]

Let $\tilde{p}_{C'}\in \R^{\{0\}\cup {I\cup\Xi}}$ be the restriction of $p_{\bar{C}}$ to $\{0\}\cup I\cup \Xi$.
Since
$p_{\bar{C}}\in \range
\begin{psmallmatrix} t^* &\bar{q}^\top\\ \bar{q}&\bar{Q} \end{psmallmatrix}$,
then
$\tilde{p}_{C}\in\range
\begin{psmallmatrix}
    t^*& \bar{q}_{I}^\top & \bar{q}_{\Xi}^\top\\
    \bar{q}_{I}& \bar{Q}_{I} & 0\\
    \bar{q}_{\Xi}^\top& 0 & \bar{Q}_{\Xi}
\end{psmallmatrix}$.
Hence, there exists $\tilde{y}\in\R^{\{0\}\cup I\cup\Xi}$ such that
$\tilde{p}_{C} = \begin{psmallmatrix}
    t^*& \bar{q}_{I}^\top & \bar{q}_{\Xi}^\top\\
    \bar{q}_{I}& \bar{Q}_{I} & 0\\
    \bar{q}_{\Xi}^\top& 0 & \bar{Q}_{\Xi}
\end{psmallmatrix}\tilde{y}$.
Consider the vectors
\begin{align*}
\ell:=
\begin{psmallmatrix}
    1&
    \tilde{x}^\top_{J'}&
    \tilde{x}^\top_{I\setminus J'}&
    \tilde{x}^\top_{\delta_I}&
\end{psmallmatrix}^\top,\quad
\bar{y}:=
\begin{psmallmatrix}
    \tilde{y}_{\{0\}}^\top&
    \tilde{y}_{J'}^\top&
    \tilde{y}_{I\setminus J'}^\top&
    0
\end{psmallmatrix},\quad
\nu_{\gamma} = \bar y + \gamma \ell
\in
\R^{\{0\}\cup J'\cup (I\setminus J')\cup \delta_I}
\end{align*}
It remains to show that
$\nu_{\gamma}^\top M_1\nu_{\gamma}<\nu_{\gamma}^\top M_2\nu_{\gamma}$ for a suitable choice of~$\gamma$.

Let us first evaluate $M_1 \bar y$ and $M_2 \bar y$.
Consider the equation
$\begin{psmallmatrix}
    t^*& \bar{q}_{I}^\top & \bar{q}_{\Xi}^\top\\
    \bar{q}_{I}& \bar{Q}_{I} & 0\\
    \bar{q}_{\Xi}^\top& 0 & \bar{Q}_{\Xi}
\end{psmallmatrix}\tilde{y} = \tilde{p}_{C}$.
By replacing $I$ by $J'\cup (I\setminus J')$ and adding rows and columns corresponding to $\delta_I$,
we can rewrite this equation as
\[\begin{psmallmatrix}
    t^*& \bar{q}_{J'}^{\top} &\bar{q}_{J'}^{\top} V^\top &\bar{q}_{\delta_I}^\top &\bar{q}_{\Xi}^\top\\
    \bar{q}_{J'}& \bar{Q}_{J'} &\bar{Q}_{J'}V^\top&\bar{Q}_{J',\delta_I}&0 \\
    V\bar{q}_{J'} &V\bar{Q}_{J'}&\bar{Q}_{I\setminus J'}&\bar{Q}_{I\setminus J',\delta_I}&0\\
    \bar{q}_{\delta_I} &\bar{Q}_{\delta_I,J'}&\bar{Q}_{\delta_I,I\setminus J'}&\bar{Q}_{\delta_I}&\bar{Q}_{\delta_I,\Xi}\\
    \bar{q}_{\Xi}&0&0&\bar{Q}_{\Xi,\delta_I}&\bar{Q}_{\Xi}
\end{psmallmatrix}
\begin{psmallmatrix}
    \tilde{y}_{\{0\}}\\\tilde{y}_{J'}\\\tilde{y}_{I\setminus J'}\\0\\ \tilde{y}_{\Xi}
\end{psmallmatrix}
=(
    \underbrace{\sqrt{\mu^*_{\bar{C}}}}_{\{0\}}~\underbrace{0}_{J'}~\underbrace{-\sqrt{\mu^*_{\bar{C}}}~\cdots~-\sqrt{\mu^*_{\bar{C}}}}_{C'}~\underbrace{0}_{I\setminus J'\setminus C'}\underbrace{*\cdots*}_{N\setminus I}
)^\top,\]
where $*\cdots *$ represent irrelevant entries.
Since $\bar{y}=\begin{psmallmatrix}
    \tilde{y}_{\{0\}}&\tilde{y}_{J'}&\tilde{y}_{I\setminus J'}&0
\end{psmallmatrix}$, then
\[
M_1\bar{y}=\begin{psmallmatrix}
    t^*& \bar{q}_{J'}^{\top} &\bar{q}_{J'}^{\top} V^\top &\bar{q}_{\delta_I}^\top \\
    \bar{q}_{J'}& \bar{Q}_{J'} &\bar{Q}_{J'}V^\top&\bar{Q}_{J',\delta_I}\\
    V\bar{q}_{J'} &V\bar{Q}_{J'}&\bar{Q}_{I\setminus J'}&\bar{Q}_{I\setminus J',\delta_I}\\
    \bar{q}_{\delta_I} &\bar{Q}_{\delta_I,J'}&\bar{Q}_{\delta_I,I\setminus J'}&\bar{Q}_{\delta_I}
\end{psmallmatrix}\bar{y}=(
    \underbrace{\alpha}_{\{0\}}\quad \underbrace{0}_{J'}\quad\underbrace{-\sqrt{\mu^*_{\bar{C}}}~\cdots~-\sqrt{\mu^*_{\bar{C}}}}_{C'}\quad \underbrace{0}_{I\setminus J'\setminus C'}\underbrace{*\cdots*}_{\delta_I}
)^\top ,
\]
for a constant $\alpha$.
While for $M_2$, by $[M_1]_{J',:}~\bar{y}=[R\quad \bar{Q}_{J',\delta_I}]\bar{y}=0$, the row dependence of $M_2$ and $\bar{y}_{\delta_I}=0$, we have 
\[M_2\bar{y}=(
    {\alpha}\quad {0}\quad{0}\quad {0}\quad {*\cdots*}
)^\top.\]
 Since we are following the optimal trajectory, all vertices in $\delta_I$ are discarded, and $I\setminus J$ contains an essential clique.

We now evaluate $M_1 \ell$ and $M_2 \ell$.
Since we are following an optimal trajectory and reach $I$, we know there exists an MWSS containing no vertices in $\delta_I$.
Thus, by~\eqref{relation:perfect graph}, there exists an optimal solution $( \tilde{x}, \tilde{X} )$ 
of ~\eqref{SDP-P} such that 
$\tilde{x}_{\delta_I}=0$.
By complementary slackness,
\(
\begin{psmallmatrix} t^* &\bar{q}^\top\\ \bar{q}&\bar{Q} \end{psmallmatrix}
\begin{psmallmatrix} 1\\ \tilde{x} \end{psmallmatrix} = 0.
\)
Also, since $\bar{C}$ is strictly essential over $G$, $\bar{C}\setminus C'\subseteq \delta_I$ and $\tilde{x}_{\delta_I}=0$, $\tilde{x}_{j}>0$ for some $j\in C'\subseteq {I\setminus J}$.
Since $\ell:=\begin{psmallmatrix}
    1&
    \tilde{x}_{J'}^\top&
    \tilde{x}_{I\setminus J'}^\top&
    \tilde{x}_{\delta_I}^\top&
\end{psmallmatrix}^\top,$
then $\ell_{j}>0$ and $\ell_{\delta_I}=0$.
Since
\(\bar{Q}_{I,\Xi}=0\), 
then
\[
\begin{psmallmatrix} t^* &\bar{q}^\top\\ \bar{q}&\bar{Q} \end{psmallmatrix}
\begin{psmallmatrix} 1\\ \tilde{x} \end{psmallmatrix}=0
\quad\implies\quad
\begin{psmallmatrix}
    \multicolumn{3}{c}{R}&\bar{Q}_{J',\delta_I} \\
    V\bar{q}_{J'} &V\bar{Q}_{J'}&\bar{Q}_{I\setminus J'}&\bar{Q}_{I\setminus J',\delta_I}
\end{psmallmatrix}\ell=0 ,
\]
and by the row dependence of $M_2$, 
we have \(M_1\ell=(
    {\lambda}\;\; {0}\;\;{0}\;\; {0}\;\; {*\cdots*}
)^\top=M_2\ell\)
for a constant~$\lambda$.

We proceed to evaluate
$\nu_{\gamma}^\top M_1\nu_{\gamma}$ and $\nu_{\gamma}^\top M_2\nu_{\gamma}$.
Since $\ell_{\delta_I}=\tilde{x}_{\delta_I}=0$,
\begin{align*}
\nu_{\gamma}^{\top}M_1\nu_{\gamma}
&=\bar{y}^\top M_1\bar{y}+2\gamma\ell^\top M_1\bar{y}+\gamma^2\ell^\top M_1\ell
=\bar{y}^\top M_1\bar{y}+2\gamma\alpha+2\gamma\sum_{i\in C'}\left(-\sqrt{\mu^*_{\bar{C}}}\right)\ell_i+0+\lambda\gamma^2\\
\nu_{\gamma}^{\top}M_2\nu_{\gamma}&=\bar{y}^\top M_2\bar{y}+2\gamma\ell^\top M_2\bar{y}+\gamma^2\ell^\top M_2\ell=\bar{y}^\top M_2\bar{y}+2\gamma\alpha+\lambda\gamma^2.\end{align*}
Since $-\sqrt{{\mu^*}_{\bar{C}}}<0$, $\ell\geq 0$ and $\ell_j=\tilde{x}_j>0$, $j\in C'\subseteq I\setminus J$, 
then $\nu_{\gamma}^\top M_1\nu<\nu_{\gamma}^\top M_2\nu_{\gamma}$ for large enough $\gamma$.
This contradicts the fact that $M_1 \succeq M_2$.
\end{proof}
We emphasize that even though the analysis of Algorithm \ref{alg:look ahead} with $\calV_{\SDP}$ relies on $\calV_{\LP}$, there is no need to compute $\calV_{\LP}$ in practice.

\subsection{Computing all Maximum Weighted Stable Sets in Polynomial Time}

We now show how to modify Algorithm \ref{alg:look ahead} to efficiently obtain all MWSS for unipolar, chordal and co-chordal graphs. In addition to the number of vertices $n$, we measure the algorithm's efficiency with the number of solutions it generates; Algorithm \ref{alg:all MWSS} describes the approach.

\begin{algorithm}[htb]
\begin{algorithmic}[1]
\State\textbf{Input:} $G=(N,E)$, a weight function $w$, optimal $x^*\in\STAB(G)$, and a tight VFA $\calV$.
\item[\textbf{Phase I:}]
\State $\mathcal{S} \gets \emptyset$
\Comment{List of stable sets, initially empty.}
\State $I \gets \{ i \in N : x_i^* >0 \}$\label{alg_step:subopt by primal}
\Comment{Discard vertices not in any optimal set.}
\State Index elements in $I$ from $1$ to $|I|$.
\State $T \gets \left[(I,\emptyset)\right]$\Comment{List of unvisited tree nodes.}

\item[\textbf{Phase II:}]
\While{$T\neq\emptyset$}
    \State $(J,S)\gets \operatorname{pop}(T)$\Comment{Get the last element in $T$ and discard it.}
    \If{$J=\emptyset$}
    \State $\mathcal{S}\gets\mathcal{S}\cup\{S\}$
    \Else
    \State{$v\gets u\in J$ with the smallest index}
        \State{$I'\gets J\setminus\{v\}$} \Comment{Test deleting $v$}
        \While{$\exists i\in I'$ with $\calV(I')-\calV(I'\setminus(\{i\}\cup\delta_i))> w_i$}\label{alg_step:discard}
            \State $I' \gets I'\setminus \{i\}$
        \EndWhile
        \If{$\calV(J)=\calV(I')$}
            \State $T\gets \operatorname{append}(T,(I',S))$ \Comment{Append the element to the end of $T$}
        \EndIf 
        \State $I' \gets J$
        \Comment{Test adding $v$ to $S$}
        \State $I' \gets I'\setminus(\{v\}\cup\delta_v)$ 
        \While{$\exists i\in I'$ with $\calV(I')-\calV(I'\setminus(\{i\}\cup\delta_i))> w_i$}\label{alg_step:discard}
            \State $I' \gets I'\setminus \{i\}$
        \EndWhile
        \If{$\calV(J)=\calV(I')+w_v$}
            \State $T\gets \operatorname{append}(T,(I',S\cup\{v\}))$.\Comment{Append the element to the end of $T$}
        \EndIf
    \EndIf
\EndWhile
\State\textbf{Output:} Return $\mathcal{S}$, a list of stable sets of $G$.
\end{algorithmic}
\caption{Retrieving all MWSS from a tight VFA with one-step look ahead} 
\label{alg:all MWSS}
\end{algorithm}

To establish the algorithm's correctness,
we first show that, for unipolar, chordal and co-chordal graphs, if a vertex is essential, deleting it decreases the VFA.
\begin{lemma}
\label{lem: not delete ess. vertex}
    Let $G=(N,E)$ be either unipolar, chordal or co-chordal and consider Algorithm~\ref{alg:look ahead} applied to $\calV_{\LP}$.
    Consider an iteration of \PII starting with set of vertices $I$; if $v$ is in every MWSS of $G|_I$, then 
    $\calV_\LP(I)>\calV_\LP(I\setminus\{v\})$.
\end{lemma}
\begin{proof}
    For contradiction, suppose that $\calV_\LP(I)=\calV_\LP(I\setminus\{v\})$ with the set of selected vertices $S$.
    By the proof of Theorem~\ref{thm:main_LP}~\ref{thm:main:unipolar}, for each later iteration starting with the set of vertices $J$, there exists a vertex $u\in J$ such that 
    $\calV_\LP(J) = \calV_\LP(J\setminus(\{u\}\cup\delta_u)$.
    Thus, by $\calV_\LP$ being tight, the returned stable set $\hat{S}$ is an MWSS of $G$.
    However, since $v\notin \hat{S}$ and every MWSS of $G|_I$ contains $v$, we know $\hat{S}\setminus S$ is not an MWSS of $G|_I$, so $\hat{S}$ is not an MWSS of $G$, a contradiction.
\end{proof}

\begin{corollary}
\label{cor: not delete ess. vertex}
        Let $G=(N,E)$ be either unipolar, chordal or co-chordal and 
        consider Algorithm~\ref{alg:look ahead} applied to $\calV_{\SDP}$.
    Consider an iteration of \PII starting with set of vertices $I$; if $v$ is in every MWSS of $G|_I$, then 
    $\calV_\SDP(I)>\calV_\SDP(I\setminus\{v\})$.
\end{corollary}
\begin{proof}
    For contradiction, suppose that $\calV_\SDP(I)=\calV_\SDP(I\setminus\{v\})$.
    Let $\hat{S}$ be a stable set returned by Algorithm~\ref{alg:look ahead}.
    By Lemma~\ref{thm:SDP_beat_LP2}, $\calV_\LP(I)=\calV_\LP(I\setminus\{v\})$.
    Thus by the proof of Lemma~\ref{thm:SDP_beat_LP2} implying Theorem~\ref{thm:SDP_beat_LP}, we know $\hat{S}$ can be returned by Algorithm~\ref{alg:look ahead} with $\calV_\LP$, so it is an MWSS of $G$.
    Using the same argument in the proof of Lemma~\ref{lem: not delete ess. vertex}, we see that $\hat{S}$ is not an MWSS of $G$, a contradiction.
\end{proof}

Thus, if one is on an optimal trajectory with $\calV_\SDP(I)=\calV_\SDP(I\setminus\{v\})$, then one stays on an optimal trajectory after deleting $v$.
Now we show the main result of this subsection.
\begin{theorem}
\label{thm:all mwss}
     For unipolar, chordal and co-chordal graphs, if there are $\ell$ different MWSS, Algorithm~\ref{alg:all MWSS} with $\calV_\SDP$ computes all $\ell$ MWSS in $\mathcal{O}(n^2\ell)$ VFA evaluations.
\end{theorem}
\begin{proof}
Let $G=(N,E)$ be  unipolar, chordal or co-chordal, and 
let $S_1,\ldots,S_\ell$ be the collection of all MWSS.
    By Lemma~\ref{lem:SDP PhaseI}, $I:=\bigcup_{i=1}^{\ell}S_i$;
    without loss of generality, assume $|I|=n$ and $I=\{1,\ldots,n\}$.

Consider a binary tree $T$ where each node $(J,S)$ indicates that $J$ is the vertex set of the remaining graph and $S\subseteq I \setminus J$ is the selected partial stable set. 
For the node $(J,S)$, let $v$ be the vertex in $J$ with the smallest index. 
    Algorithm~\ref{alg:all MWSS} creates children for this node in the following ways.
\begin{enumerate}
\item Consider deleting $v$.
After tentatively deleting $v$, if $\calV_\SDP$ does not decrease in value, create a child node $(I',S)$, where $I'$ is the remaining vertex set after deleting $v$ and deleting all bad vertices from $J$.
        If $\calV_\SDP$ decreases in value, by Corollary~\ref{cor: not delete ess. vertex}, $v$ is an element of every MWSS of the remaining set; do not create a child node for deleting $v$.
        
\item Consider adding $v$ to $S$ and applying a look-ahead. After tentatively choosing $v$, if $v$ is a good choice, create a child node $(I',S\cup\{v\})$, where $I'$ is the remaining vertex set after choosing $v$, deleting its neighbors, and deleting all bad vertices from $J$.
If $v$ is a bad choice, do not create a child node for adding $v$.
\end{enumerate}
Thus, at most two children are created, one for selecting $v$ and one for deleting $v$. 
Since Algorithm~\ref{alg:all MWSS} considers and discards the lowest-indexed remaining vertex in each iteration, it does not create duplicate nodes with the same pair $(J,S)$. Each node is considered exactly once, and leaves are nodes of the form $(\emptyset, S)$.

\begin{claim}
Every MWSS is represented by a unique leaf in the binary tree.
\end{claim}
\begin{proof}
Let $\hat{S}=\{v_1,v_2,v_3,\dotsc,v_k\}$ be an arbitrary MWSS, where $v_1 < v_2 < v_3 < \dotsb < v_k $.
    By Theorem~\ref{thm:main_LP} \ref{thm:main:unipolar} and Lemma \ref{lem:fund_VFA_prop}, $\hat{S}$ can be possibly returned by Algorithm~\ref{alg:look ahead} and hence by Algorithm~\ref{alg:all MWSS}.
    And since $\hat{S}$ is an MWSS, $J$ is empty after $v_k$ is selected, so it is represented uniquely by a leaf. 
    In particular, it is represented uniquely by the path from the root to the leaf following the algorithm's order of selecting and deleting vertices according to the indexing.
\end{proof}

\begin{claim}
Every leaf of the binary tree represents an MWSS of the graph.
\end{claim}
\begin{proof}
By Theorem~\ref{thm:main_LP}~\ref{thm:main:unipolar}, Theorem~\ref{thm:SDP_beat_LP}, Lemma~\ref{lem: not delete ess. vertex} and Corollary~\ref{cor: not delete ess. vertex}, for all possible actions (children) at each node, Algorithm~\ref{alg:all MWSS} stays on an optimal trajectory.
Thus, when the remaining vertex set $J$ is not empty, there exists at least one optimal choice in $J$ to select. 
If the smallest-indexed vertex $v\in J$ is an optimal choice, then a child node for ``selecting $v$'' is in the tree; if it is not in every MWSS of $G'$, then a child node for ``deleting $v$'' is also created. 
Thus, for every leaf in the binary tree, $J$ is empty.
The path from the root to that leaf represents a unique stable set for $G$ made of all nodes selecting vertices and it can be returned by Algorithm~\ref{alg:look ahead} and Algorithm~\ref{alg:all MWSS}.
By Theorem~\ref{thm:main_LP} \ref{thm:main:unipolar} and Theorem~\ref{thm:SDP_beat_LP}, it is an MWSS.
\end{proof}

By the two claims above, we can compute all MWSS by running Algorithm~\ref{alg:all MWSS}.
Since each node of the binary tree $T$ is visited exactly once, the number of iterations is the number of nodes.
The depth of $T$ is at most $n$ and it has $\ell$ leaves, so the number of nodes is $\mathcal{O}(n\ell)$.
Each node involves $\mathcal{O}(n)$ VFA evaluations,
resulting in $\mathcal{O}(n^2\ell)$ total VFA evaluations for Algorithm~\ref{alg:all MWSS}.
\end{proof}

Since $\calV_\SDP$ can be computed in polynomial time given the SDP solution, Theorem~\ref{thm:all mwss} shows that all MWSS can be returned in time that is polynomial time in the number of vertices and the number of different MWSS.


\section{Computational Experiments}
\label{sec:comp_exp}

In this section, we discuss computational experiments on a variant of Algorithm~\ref{alg:look ahead} applied to arbitrary graphs; 
we detail this version in Algorithm~\ref{alg:VFA2}. 
This algorithm variant does not assume that the VFA is tight,
so it can be applied with imperfect graphs.
In particular, it does not discard vertices $j$ for which
$\calV(I) > \calV(I\setminus(\{j\}\cup\delta_j)) + w_j$,
but instead selects a vertex by maximizing
$\calV(I\setminus(\{j\}\cup\delta_j)) + w_j$;
this is how VFAs are typically used to generate heuristic solutions in approximate DP. 
To reduce the number of VFA evaluations,
Algorithm~\ref{alg:VFA2} selects isolated vertices or optimal leaves of $G|_I$ whenever possible; the latter are leaves whose weight matches or exceeds that of their only neighbor.


\begin{algorithm}[htb]
\begin{algorithmic}
\State\textbf{Input:} $G=(N,E)$, a weight function $w$ and a VFA $\calV$.
\State $S \gets \emptyset$ and
$I \gets N$
\Comment{Start with an empty stable set.}
\While{$I\neq\emptyset$}
    \State 
    $i \gets \begin{cases}
        \text{an isolated vertex or optimal leaf of }G|_I & \textbf{if one exists}\\
        \argmax_{j\in I} (\calV(I\setminus(\{j\}\cup\delta_j)) + w_j) & \textbf{otherwise}
    \end{cases}$
    \State $S \gets S \cup \{i\}$
    \Comment{Select $i$ to join the stable set.}
    \State $I \gets I\setminus(\{i\}\cup\delta_i)$
    \Comment{Discard $i$ and $\delta_i$ from remaining vertices.}
\EndWhile
\State\textbf{Output:} Return $S$, a stable set of $G$.
\end{algorithmic}
\caption{Retrieving a stable set from an arbitrary VFA}\label{alg:VFA2}
\end{algorithm}

We performed the experiments on a 2021 MacBook Pro with an 8-core Apple M1 Pro CPU and 16 GB of memory.
We solve the primal-dual pair \eqref{SDP-P}, \eqref{SDP-D} using either the commercial solver COPT \cite{copt} for dense graphs
or the SDP solver HALLaR \cite{monteiro2024lowrankaugmentedlagrangianmethod} for sparse graphs;
we implement Algorithm~\ref{alg:VFA2} in Julia.
We solve all SDPs within a relative precision of $\varepsilon_{\SDP} = 10^{-5}$.
We evaluate $\calV_{\SDP}$ using the quadratic minimization characterization from Lemma~\ref{lem:comp equiv}, $\calV_{\SDP}(S)=-\min_{y\in\R^S}(y^\top Q_S y - 2 q_S^\top y)$,
including a regularization term $\lambda \lVert y \rVert ^2$ with $\lambda = 10^{-4}$.
We solve the VFA quadratic minimization to a precision of $\varepsilon_{\mathrm{VFA}} = 10^{-6}$ with an iterative method,
warm-started with the previous solution;
this characterization allows us to trade precision for efficiency.
Note that it is possible to get better results by tuning the precision parameters to each individual instance; 
however, we present results using uniform parameters for consistency.

In our first set of experiments, we test Algorithm~\ref{alg:VFA2} on special classes of perfect graphs.
More precisely, we generate chordal and co-chordal graphs using the random generators with algorithms \texttt{"growing"}, \texttt{"connecting"} and \texttt{"pruned"} from SageMath~\cite{sagemath} with default parameters.
We also generate uniformly random generalized split graphs using the algorithm by McDiarmid and Yolov \cite{mcdiarmid2017randomperfectgraphs}.
In total, we tested 300 chordal and co-chordal graphs (100 generated by each algorithm)
and 100 generalized split graphs, using the cardinality objective, $w_i=1$ for every $i\in N$. 
For all instances, we obtain an optimal solution, which agrees with our theoretical results.

In the second set of experiments, we test Algorithm~\ref{alg:VFA2} on graphs that are not necessarily perfect, including
graphs from DIMACS2 \cite{ams1996dimacs}, GSet \cite{gset} DIMACS10 \cite{dimacs10}, and SNAP \cite{snapnets}; 
for DIMACS2, we use graph complements, as the instances were designed for the maximum clique problem.
As before, we use the cardinality objective.
We compare Algorithm~\ref{alg:VFA2} against the Benson-Ye (BY) rounding method~\cite{BensonYe}
and the Walteros-Buchanan (WB) fixed-parameter tractable algorithm \cite{walteros_why_2020}. 
We run BY on the same graphs as Algorithm~\ref{alg:VFA2}, and WB on the complements, as it is designed for the maximum clique problem.
BY is randomized and inexpensive, so
we run it $|N|$ times as suggested in \cite{BensonYe} and report its best and average performance.
For all three methods,
we first apply a pre-processing step so that the graph is connected and has no leaves:
if it is disconnected, we consider each component separately,
and if there is a leaf, we select it.
We implement BY and use the WB code provided in \cite{walteros_why_2020}.
The latter is an exact algorithm
taking exponential time in the worst case,
but it performs well in practice in many large instances.
However, some instances are challenging for WB; for more details, we refer the reader to \cite{walteros_why_2020}.
We only include results for instances that are solved in 30 minutes and leave the solution time blank otherwise. 

Tables~\ref{table:comp_dimacs2} and~\ref{table:comp_gset} respectively summarize the results for DIMACS2 (78 graphs) and GSet (71 graphs). 
Table~\ref{table:comp_dimacs10} summarizes the results for the larger instances from DIMACS10 (19 graphs) and SNAP (7 graphs). 
Overall, Algorithm~\ref{alg:VFA2} significantly outperforms the BY average in every single instance, and also beats the BY best solution (often significantly) except in instances \texttt{san200-0-7-2} and \texttt{san200-0-9-3} from DIMACS2. 
For these two instances, we can obtain an optimal solution by tuning the parameters
$\lambda, \varepsilon_{\mathrm{VFA}}$,
but we do not include those results for consistency.
WB returns the optimal value whenever it terminates within the time limit, but does not terminate for many of our instances.  
We highlight in bold the instances where 
Algorithm~\ref{alg:VFA2} matches $\alpha$ or the best known bound per \cite{prosser2012exactalgorithmsmaximumclique,walteros_why_2020}.

Our algorithm's performance is noteworthy, particularly in the large instances. The stable set generated by the algorithm frequently attains the stability number $\alpha$ when this quantity is known; in instances where it falls short or where $\alpha$ is unknown, our method still significantly outperforms the BY method.  
Finally, note that the computational effort to run Algorithm~\ref{alg:VFA2} is typically much smaller than the cost of solving the SDP, so we expect it to perform much more efficiently than~\cite{GROTSCHEL1984325} in practice.

To highlight the versatility of our algorithm, we conducted a third set of experiments on instances with a weighted objective.  
We reuse the 300 chordal and co-chordal graphs and the 100 generalized split graphs generated in the first set of experiments.
For an instance with name $G$ and $n$ vertices, we initialized a random number generator \texttt{MersenneTwister} in Julia with the per-instance seed \texttt{hash((20260226, G, n))} and sampled $n$ weights uniformly from $\{1,\dots,10\}$.
For all these instances, Algorithm~\ref{alg:VFA2} obtains a maximum weighted stable set, which agrees with our theoretical results.

We also conducted an analogous set of experiments with weighted-objective instances based on the DIMACS2 dataset. 
The computational setup is identical to the earlier experiments with the following exceptions. 
We solved instance \texttt{MANN-a45} using COSMO \cite{Garstka_2021} because COPT encountered numerical issues and reported an erroneous bound;
similarly, we solved instances \texttt{c2000.5, c2000.9, c4000.5, keller6} with COSMO, to within a relative precision of $\varepsilon_\SDP=10^{-4}$, because the weighted objective significantly increased computation times in these instances. 
The results are summarized in Table~\ref{table:weighted_comp_dimacs_full}. 
As before, these graphs are not necessarily perfect, so the Lovász theta function bound $\vartheta(G, w)$ 
is only an upper bound benchmark. 
Since the exact weighted stability numbers for these instances are unknown and the WB algorithm 
is limited to the unweighted variant, we omit the corresponding baseline columns. 
We highlight in bold the instances where Algorithm~\ref{alg:VFA2} matches the upper bound $\vartheta$. 
In all reported instances where $\vartheta(G,w)$ is integral, Algorithm~\ref{alg:VFA2} matches $\vartheta(G,w)$ and is therefore optimal.

\section{Conclusions and Future Directions}

We provide a novel rounding scheme for the Lov\'asz theta function to solve the MWSS problem.
Algorithm~\ref{alg:look ahead} relies on a VFA constructed from the optimal solution of the SDP.
Theorem~\ref{thm:main_SDP} guarantees that Algorithm~\ref{alg:look ahead} paired with VFA $\calV_{\SDP}$ solves the MWSS problem for several important subclasses of perfect graphs, which asymptotically cover almost all perfect graphs;
the algorithm requires only the initial SDP solution. 
The algorithm can also be modified to efficiently output all MWSS for unipolar, chordal and co-chordal graphs.
To the best of our knowledge, this is the only known rounding strategy for the Lov\'asz theta function that solves the MWSS problem for large sub-classes of perfect graphs.

Our computational experiments show that Algorithm~\ref{alg:VFA2},
a simplified variant of Algorithm~\ref{alg:look ahead},
works well in practice.
Algorithm~\ref{alg:VFA2} recovers an MWSS for all instances of perfect graphs we tested.
The algorithm performs very well even for imperfect graphs, matching the best known bound in many of the instances we tested.
We believe that Algorithm~\ref{alg:VFA2} should perform well on graphs for which the theta function $\vartheta(G)$ is close to the stability number $\alpha(N)$.
Importantly, the computational cost of our rounding procedure is much lower than the cost of solving the SDP with a state-of-the-art method.

None of our computational experiments use look-ahead.
We conjecture that the requirement of using look-ahead is only an artifact of our proof technique,
and that Algorithm~\ref{alg:look ahead} without look-ahead still returns an MWSS for generalized split graphs, chordal, and co-chordal graphs.

Our analysis of $\calV_{\SDP}$ uses the LP-based VFA $\calV_{\LP}$.
The need for $\calV_{\LP}$ comes from its combinatorial interpretation,
and we do not have a similar interpretation for $\calV_{\SDP}$.
An interesting future direction is finding a combinatorial interpretation of $\calV_{\SDP}$, or perhaps constructing a different VFA that optimizes the stable set problem for a larger family of perfect graphs. 

Further research includes applying similar techniques to related problems, such as the dynamic stable set problem proposed in \cite{DNP}. 

{
    \begin{table}
    \normalsize
    \centering
    \resizebox{0.85\columnwidth}{!}{\begin{tabular}{ccccc|cccc|Hcccc}
        \toprule
        
        \multicolumn{5}{c|}{Graph}& \multicolumn{4}{c|}{Solution value} & \#VFA & \multicolumn{4}{c}{Time (s)} \\
        Name & $|N|$ & $|E|$ & $\vartheta$& $\alpha$ & Alg.\ref{alg:VFA2} & BY${}_{\text{avg}}$ & BY${}_{\text{best}}$ & WB & calls & SDP & Alg.\ref{alg:VFA2} & BY${}_{\text{total}}$ &WB${}$\\
        \midrule
        \textbf{MANN-a27} & 378 & 702 & 132.77 & 126 & 126 & 113.2 & 124 & 126 & 3521 & 7.92 & 1.02 & 0.02 & 0.16 \\ 
        MANN-a45 & 1035 & 1980 & 356.05 & 345 & 343 & 307.06 & 339 & 345 & 17591 & 38.89 & 10.76 & 0.14 &  129.97\\ 
        MANN-a81 & 3321 & 6480 & 1126.64 & 1100 & 1098 & 985.88 & 1089 & 1100 & 76335 & 543.92 & 151.41 & 1.91 &  55958.3\\ 
        \textbf{MANN-a9} & 45 & 72 & 17.48 & 16 & 16 & 14.64 & 16 & 16 & 138 & 0.2 & 1.99 & 0.06 & $<$0.01 \\ 
        brock200-1 & 200 & 5066 & 27.46 & 21 & 19 & 7.11 & 13 & 21 & 789 & 3.4 & 0.85 & 0.05 & 12.25 \\ 
        brock200-2 & 200 & 10024 & 14.23 & 12 & 10 & 2.66 & 6 & 12 & 426 & 18.87 & 0.57 & 0.08 & 0.19 \\ 
        brock200-3 & 200 & 7852 & 18.82 & 15 & 13 & 4.22 & 8 & 15 & 536 & 9.59 & 0.65 & 0.06 & 0.99 \\ 
        brock200-4 & 200 & 6811 & 21.29 & 17 & 14 & 4.73 & 9 & 17 & 631 & 6.93 & 0.74 & 0.05 & 3.22 \\ 
        brock400-1 & 400 & 20077 & 39.7 & 27 & 22 & 6.4 & 12 &  & 1682 & 142.46 & 2.98 & 0.3 &  \\ 
        brock400-2 & 400 & 20014 & 39.56 & 29 & 21 & 6.46 & 12 &  & 1840 & 150.39 & 3.04 & 0.32 &  \\ 
        brock400-3 & 400 & 20119 & 39.48 & 31 & 24 & 6.57 & 12 &  & 1750 & 142.73 & 2.66 & 0.31 &  \\ 
        brock400-4 & 400 & 20035 & 39.6 & 33 & 22 & 6.57 & 12 &  & 1772 & 143.57 & 3.73 & 0.32 &  \\ 
        brock800-1 & 800 & 112095 & 42.22 & 23 & 20 & 3.78 & 9 &  & 2610 &183.06& 10.32 & 3.34 &  \\ 
        brock800-2 & 800 & 111434 & 42.47 & 24 & 18 & 3.78 & 9 &  & 2728 & 177.83 & 10.85 & 3.31 &  \\ 
        brock800-3 & 800 & 112267 & 42.24 & 25 & 19 & 3.66 & 9 &  & 2733 & 180.22 & 10.51 & 3.37 &  \\ 
        brock800-4 & 800 & 111957 & 42.35 & 26 & 20 & 3.58 & 9 &  & 2656 & 177.56 & 11.53 & 3.34 &  \\ 
        \textbf{c-fat200-1} & 200 & 18366 & 12 & 12 & 12 & 3.46 & 12 & 12 & 210 & 44.17 & 0.81 & 0.16 & $<$0.01\\ 
        \textbf{c-fat200-2} & 200 & 16665 & 24 & 24 & 24 & 23.91 & 24 & 24 & 210 & 0.45 & 0.27 & 0.16 &  $<$0.01\\ 
        \textbf{c-fat200-5} & 200 & 11427 & 60.35 & 58 & 58 & 31.38 & 58 & 58 & 260 & 1.14 & 0.04 & 0.1 & $<$0.01 \\ 
        \textbf{c-fat500-10} & 500 & 78123 & 126 & 126 & 126 & 114.32 & 126 & 126 & 565 & 12.40 & 1.1 & 1.66 &  0.02\\ 
        \textbf{c-fat500-1} & 500 & 120291 & 14 & 14 & 14 & 3.18 & 14 & 14 & 510 & 4.17 & 1.83 & 2.12 & $<$0.01 \\ 
        \textbf{c-fat500-2} & 500 & 115611 & 26 & 26 & 26 & 5.86 & 26 & 26 & 510 & 18.12 & 1.27 & 2.03 & $<$0.01 \\ 
        \textbf{c-fat500-5} & 500 & 101559 & 64 & 64 & 64 & 59.12 & 64 & 64 & 510 & 201.85 & 0.84 & 1.99 & $<$0.01 \\ 
        c1000.9 & 1000 & 49421 & 123.49 & $\geq 68$ & 62 & 18.42 & 33 &  & 9690 & 675.64 & 32.1 & 1.88 &  \\ 
        \textbf{c125.9} & 125 & 787 & 37.81 & 34 & 34 & 21.76 & 32 &  & 530 & 3.24 & 0.28 & $<$0.01 & 0.45 \\ 
        c2000.5 & 2000 & 999164 & 44.87 & 16 & 14 & 1.84 & 7 &  & 4351 & 1556.79 & 117.96 & 82.91 &  \\ 
        c2000.9 & 2000 & 199468 & 178.93 & $\geq80$ & 68 & 16.41 & 31 &  & 21745 & 35044.21 & 272.68 & 19.54 &  \\ 
        c250.9 & 250 & 3141 & 56.24 & 44 & 40 & 21.36 & 31 &  & 1648 & 6.53 & 0.75 & 0.03 &  \\ 
        c4000.5 & 4000 & 3997732 & 64.57 & 18 & 15 & 1.68 & 6 &  & 8619 & 518753.57 & 4840.99 & 680.11 &  \\ 
        c500.9 & 500 & 12418 & 84.2 & $\geq 57$ & 52 & 20.51 & 35 &  & 4105 & 49.55 & 4.78 & 0.25 &  \\ 
        dsjc1000\_5 & 1000 & 249674 & 31.89 & 15 & 12 & 2.03 & 6 & 15 & 2304 & 13610.94 & 75.32 & 9.63 &  3045.79\\ 
        dsjc500\_5 & 500 & 62126 & 22.74 & 13 & 11 & 2.31 & 6 & 13 & 1178 & 890/11  & 9.96 & 1.11 & 179.96 \\ 
        {gen200\_p0.9\_44} & 200 & 1990 & 44.01 & 44 & 38 & 22.66 & 38 &  & 1214 & 5.10 & 0.98 & 0.02 & 361.19 \\ 
        \textbf{gen200\_p0.9\_55} & 200 & 1990 & 55 & 55 & 55 & 54.7 & 55 &  & 345 & 1.81 & 0.24 & 0.02 & 172.09 \\ 
        gen400\_p0.9\_55 & 400 & 7980 & 55 & 55 & 45 & 20.88 & 33 &  & 2803 & 72.46 & 1.55 & 0.14 &  \\ 
        gen400\_p0.9\_65 & 400 & 7980 & 65.02 & 65 & 44 & 21.18 & 34 &  & 3746 & 28.22 & 3.01 & 0.16 &  \\ 
        \textbf{gen400\_p0.9\_75} & 400 & 7980 & 75 & 75 & 75 & 74.54 & 75 &  & 617 & 44.17 & 1.66 & 0.14 &  \\ 
        \textbf{hamming10-2} & 1024 & 5120 & 512 & 512 & 512 & 512 & 512 &  & 2105 & 0.1 & 4.07 & 0.34 & 8.62 \\ 
        hamming10-4 & 1024 & 89600 & 51.2 & $\geq 40$ & 35 & 3.56 & 18 &  & 2958 & 104.78 & 18.7 & 3.75 &  \\ 
        \textbf{hamming6-2} & 64 & 192 & 32 & 32 & 32 & 32 & 32 & 32 & 101 & 0.11 & 0.01 & $<$0.01 & $<$0.01 \\ 
        \textbf{hamming6-4} & 64 & 1312 & 5.33 & 4 & 4 & 1.53 & 3 & 4 & 95 & 0.07 & 0.01 & $<$0.01 & $<$0.01 \\ 
        \textbf{hamming8-2} & 256 & 1024 & 128 & 128 & 128 & 128 & 128 & 128 & 491 & 0.05 & 0.21 & 0.02 & 0.05 \\ 
        \textbf{hamming8-4} & 256 & 11776 & 16 & 16 & 16 & 2.25 & 9 & 16 & 321 & 34.76 & 0.46 & 0.12 &  6.83\\ 
        \textbf{johnson16-2-4} & 120 & 1680 & 8 & 8 & 8 & 2.75 & 6 & 8 & 173 & 0.33 & 0.01 & 0.01 & 3.33 \\ 
        \textbf{johnson32-2-4} & 496 & 14880 & 16 & 16 & 16 & 3.87 & 12 &  & 621 & 45.22 & 0.06 & 0.32 &  \\ 
        \textbf{johnson8-2-4} & 28 & 168 & 4 & 4 & 4 & 2.11 & 4 & 4 & 45 & 0.02 & 2.22 & 0.07 & 0.01 \\ 
        \textbf{johnson8-4-4} & 70 & 560 & 14 & 14 & 14 & 5.74 & 14 & 14 & 125 & 0.06 & 0.09 & $<$0.01 & $<$0.01 \\ 
        \textbf{keller4} & 171 & 5100 & 14.01 & 11 & 11 & 2.95 & 7 & 11 & 443 & 2.77 & 1.2 & 0.04 & 0.82 \\ 
        keller5 & 776 & 74710 & 31 & 27 & 21 & 3.82 & 12 &  & 981 & 79.41 & 4.34 & 2.26 &  \\ 
        keller6 & 3361 & 1026582 & 66.89 & 59 & 47 & 9.49 & 39 &  & 23189 & 9662.47 & 1275.15 & 147.79 &  \\ 
        p-hat1000-1 & 1000 & 377247 & 17.61 & 10 & 9 & 1.68 & 6 & 10 & 1652 & 241.39 & 26.67 & 14.36 & 54.62 \\ 
        p-hat1000-2 & 1000 & 254701 & 55.61 &  & 44 & 19.14 & 38 &  & 2511 & 10622.85 & 49.4 & 10.32 &  \\ 
        p-hat1000-3 & 1000 & 127754 & 84.8 &  & 63 & 24.8 & 50 &  & 4533 & 6577.31 & 27.14 & 5.03 &  \\ 
        p-hat1500-1 & 1500 & 839327 & 22.01 & 12 & 10 & 1.61 & 5 & 12 & 2533 & 663.11 & 95.44 & 50.08 & 1122.13\\ 
        p-hat1500-2 & 1500 & 555290 & 77.56 & 65 & 63 & 26.04 & 53 & &3758 &66738.36 & 171.87 & 35.82 \\ 
        p-hat1500-3 & 1500 & 277006 & 115.43 & 94 & 91 & 34.58 & 74 & &7027&68850.02 & 623.32 & 32.34 \\ 
        p-hat300-3&300&11460&41.17&36&34&16.45&	29&&1101&38.51&	7.22&0.73&167.74\\
        {p-hat500-1}
        &500&93181&13.07&9&8&1.75&	6&	&839&32.59&30.22&1.79&0.54\\
        p-hat500-2
        &500&61804&38.97&36&34&	17.72&	34&	&1149&1492.97&53.84&1.33&208.03\\
        p-hat500-3 & 500 & 30950 & 58.57 &  50 & 48 & 22.28&	42&	&1966&	1100.74&35.55&	0.66\\
        p-hat700-1 & 700 & 183651 & 15.12 & 11 & 8&1.67&7&	&1179&95.68&67.09&	4.94&8.09\\
        {p-hat700-2}&700&122922&49.02&44&43&21.45&39&&1657&4091.29&155.49&4.69\\
         \textbf{p-hat700-3} & 700 & 61640 & 72.7 & 62& 62 & 26.98 & 54 & &2724 & 4256.37 & 2.84 & 2.10 \\
        san1000 & 1000 & 249000 & 15 & 15 & 10 &1.67&8&	&1066&75.06&	22.29&16.00&\\
        \textbf{san200-0-7-1} & 200 & 5970 & 30 & 30 & 30 & 30 & 30 &
        &246&5.12&0.95&0.06&674.78\\
        san200-0-7-2 & 200 & 5970 & 18 &  & 14 & 17.95 & 18 &  & 517 & 7.39 & 2.14 & 0.05 &  \\ 
         \textbf{san200-0-9-1} & 200 & 1990 & 70 & 70 & 70 &
        70&70&&480&0.78&1.34&0.05&3.82\\
        \textbf{san200-0-9-2} & 200 & 1990 & 60 & 60 & 60 & 60&60&&354&0.68&0.96&0.04&130.23\\
        san200-0-9-3 & 200 & 1990 & 44 &  & 37 & 43.88 & 44 &  & 786 & 1.07 & 2.04 & 0.02 & 1351.19 \\ 
        \textbf{san400-0-5-1} & 400 & 39900 & 13 & 13 & 13 & 
        12.92&13&&437&24.38&	20.09&0.63\\
        \textbf{san400-0-7-1} & 400 & 23940 & 40 & 40 & 40 & 40 & 40 & 
        &509&219.62&	9.33&	0.54\\
        \textbf{san400-0-7-2} & 400 & 23940 & 30 & 30 & 30 & 30 & 30 & 
        &518&245.79&	18.95&	0.48\\
        san400-0-7-3 & 400 & 23940 & 22 &  & 16 & 4.15 & 9 &  & 490 & 275.76 & 0.14 & 0.36 &  \\ 
        \textbf{san400-0-9-1} & 400 & 7980 & 100 & 100 &
        100&100&100&&572&12.80&9.56&	0.22\\
        sanr200-0-7 & 200 & 6032 & 23.84 & 18 & 16 & 5.76&12&	&709&	5.76&1.99&	0.10 &4.89\\
        sanr200-0-9 & 200 & 2037 & 49.3 & 42 &41 &21.92& 33 & &915 & 0.75 & 0.36& 0.02&966.90 \\
        sanr400-0-5 & 400 & 39816 & 20.32 & 13 & 12 &
        2.49&7&&	903&31.23&	11.08&	0.63&12.54\\
        sanr400-0-7 & 400 & 23931 & 34.28 & 21 & 19 &
        5.27&10&&1505&247.17&	20.65&	0.66
    \end{tabular}}
\caption{ 
Comparison of Algorithm~\ref{alg:VFA2} with Benson-Ye (BY) and Walteros-Buchanan (WB) on graphs from the DIMACS2 dataset \cite{ams1996dimacs}. 
All instances are complemented first; $|E|$ is the number of edges after taking the complement.
Algorithm~\ref{alg:VFA2} attains optimality for the bolded instances. 
We run the randomized BY method $ \lvert N \rvert $ times and report the average and best value obtained.}
\label{table:comp_dimacs2}
\end{table}
}

\begin{table}[htb]
    \centering
    \normalsize
    \resizebox{0.8\columnwidth}{!}{\begin{tabular}{cccc|cccc|cccc}
        \toprule
        \multicolumn{4}{c|}{Graph}& \multicolumn{4}{c|}{Solution value} & \multicolumn{4}{c}{Time (s)} \\
        Name & $|N|$ & $|E|$ & $\vartheta$ & Alg.\ref{alg:VFA2} & BY${}_{\text{avg}}$ & BY${}_{\text{best}}$ & WB & SDP & Alg.\ref{alg:VFA2} & BY${}_{\text{total}}$ &WB${}$\\
        \midrule
        G1 & 800 & 19176 & 145.03 & 86 & 34.61 & 54 &  & 144.62 & 5.26 & 0.95 \\ 
        G2 & 800 & 19176 & 145.35 & 86 & 34.38 & 54 &  & 144.35 & 2.65 & 1.26 \\
        G3 & 800 & 19176 & 145.3 & 86 & 34.25 & 57 &  & 136.46 & 2.83 & 1.01 \\
        G4 & 800 & 19176 & 145.37 & 85 & 34.87 & 54 &  & 142.12 & 2.29 & 0.86 \\
        G5 & 800 & 19176 & 145.36 & 85 & 34.91 & 53 &  & 139.21 & 2.52 & 0.73 \\
        G6 & 800 & 19176 & 145.03 & 86 & 34.1 & 55 &  & 147.05 & 2.49 & 0.76 \\
        G7 & 800 & 19176 & 145.35 & 86 & 34.43 & 54 &  & 148.82 & 3.1 & 1.27 \\ 
        G8 & 800 & 19176 & 145.3 & 86 & 34.21 & 56 &  & 144.48 & 3.21 & 0.75 \\ 
        G9 & 800 & 19176 & 145.37 & 85 & 34.58 & 54 &  & 142.7 & 2.42 & 0.8 \\ 
        G10 & 800 & 19176 & 145.36 & 85 & 34.83 & 53 &  & 137.69 & 2.39 & 0.75 \\ 
        \textbf{G11} & 800 & 1600 & 400.02 & 400 & 400 & 400 & 400 & 2.21 & 1.55 & 0.14 &0.27\\ 
        \textbf{G12} & 800 & 1600 & 400.01 & 400 & 400 & 400 & 400 & 0.13 & 0.11 & 0.09 &0.28\\ 
        G13 & 800 & 1600 & 398.42 & 384 & 381.42 & 384 & & 1.37 & 0.17 & 0.09 \\ 
        \textbf{G14} & 800 & 4694 & 279 & 279 & 275.45 & 279 &  & 42.65 & 0.34 & 0.17 \\ 
        \textbf{G15} & 800 & 4661 & 283.85 & 283 & 270.32 & 282 &  & 220.98 & 26.45 & 0.18 \\ 
        \textbf{G16} & 800 & 4672 & 285.16 & 285 & 266.85 & 284 &  & 220.98 & 81.01 & 0.24 \\
        \textbf{G17} & 800 & 4667 & 286.23 & 286 & 271.54 & 285 &  & 191.85 & 24.56 & 0.18 \\ 
        \textbf{G18} & 800 & 4694 & 279 & 279 & 275.45 & 279 &  & 41.01 & 0.27 & 0.17 \\ 
        \textbf{G19} & 800 & 4661 & 283.85 & 283 & 269.63 & 283 &  & 226.4 & 27.16 & 0.19 \\ 
        \textbf{G20} & 800 & 4672 & 285.16 & 285 & 267.93 & 285 &  & 224.62 & 78.44 & 0.19 \\
        \textbf{G21} & 800 & 4667 & 286.23 & 286 & 269.91 & 286 &  & 191.94 & 24.94 & 0.21 \\ 
        G22 & 2000 & 19990 & 578.51 & 410 & 238.82 & 305 &  & 377.25 & 36.83 & 3.74 \\ 
        G23 & 2000 & 19990 & 577.93 & 415 & 237.18 & 299 &  & 378.67 & 33.63 & 4.21 \\ 
        G24 & 2000 & 19990 & 580.01 & 411 & 239.74 & 308 &  & 378.68 & 35.27 & 3.78 \\ 
        G25 & 2000 & 19990 & 578.09 & 406 & 237.85 & 301 &  & 379.84 & 33.88 & 4.08 \\ 
        G26 & 2000 & 19990 & 577.99 & 411 & 238.09 & 299 &  & 368.94 & 35.1 & 3.86 \\ 
        G27 & 2000 & 19990 & 578.51 & 410 & 238.37 & 309 &  & 366 & 33.38 & 6.37 \\ 
        G28 & 2000 & 19990 & 577.93 & 415 & 238.72 & 300 &  & 367.88 & 34.05 & 3.95 \\ 
        G29 & 2000 & 19990 & 580.01 & 411 & 239.24 & 303 &  & 369.62 & 37.82 & 3.77 \\ 
        G30 & 2000 & 19990 & 578.09 & 406 & 237.12 & 301 &  & 370.36 & 38.12 & 4.42 \\ 
        G31 & 2000 & 19990 & 577.99 & 411 & 237.74 & 301 &  & 371.64 & 34.09 & 4.02 \\ 
        \textbf{G32} & 2000 & 4000 & 1000 & 1000 & 1000 & 1000 & 1000 & 0.35 & 0.64 & 0.53 &20.95\\ 
        G33 & 2000 & 4000 & 996.04 & 960 & 945.7 & 960 &  & 191.85 & 304.3 & 0.55 \\
        \textbf{G34} & 2000 & 4000 & 1000 & 1000 & 1000 & 1000 & 1000 & 0.27 & 0.56 & 0.52 &20.44\\ 
        \textbf{G35} & 2000 & 11778 & 718.27 & 718 & 693.53 & 716 &  & 378.97 & 0.89 & 1.13 \\ 
        G36 & 2000 & 11766 & 696.03 & 695 & 671.48 & 695 &  & 586.57 & 101.99 & 1.11 \\ 
        \textbf{G37} & 2000 & 11785 & 708.02 & 708 & 687.27 & 707 &  & 306.7 & 0.95 & 1.16 \\ 
        \textbf{G38} & 2000 & 11779 & 716.02 & 716 & 694.83 & 715 &  & 298.5 & 0.91 & 1.11 \\ 
        \textbf{G39} & 2000 & 11778 & 718.27 & 718 & 693.53 & 716 &  & 372.92 & 1.17 & 1.12 \\  
        G40 & 2000 & 11766 & 696.03 & 695 & 670.97 & 695 &  & 571.55 & 103.25 & 1.1 \\  
        \textbf{G41} & 2000 & 11785 & 708.02 & 708 & 687.27 & 707 &  & 299.61 & 1.04 & 1.13 \\ 
        \textbf{G42} & 2000 & 11779 & 716.02 & 716 & 694.83 & 715 &  & 306.85 & 1.22 & 1.11 \\ 
        G43 & 1000 & 9990 & 280.62 & 199 & 114.89 & 151 &  & 42.47 & 6.6 & 0.72 \\ 
        G44 & 1000 & 9990 & 280.58 & 206 & 117.66 & 152 &  & 42.35 & 4.72 & 0.68 \\ 
        G45 & 1000 & 9990 & 280.19 & 198 & 115.24 & 149 &  & 41.16 & 5.03 & 1.03 \\ 
        G46 & 1000 & 9990 & 279.84 & 199 & 115.75 & 155 &  & 40.47 & 7.43 & 0.66 \\ 
        G47 & 1000 & 9990 & 281.89 & 203 & 117.81 & 152 &  & 40.65 & 4.91 & 0.66 \\ 
        \textbf{G48} & 3000 & 6000 & 1500 & 1500 & 1500 & 1500 & 1500 & 0.48 & 142.93 & 1.18 &23.40 \\ 
        \textbf{G49} & 3000 & 6000 & 1500 & 1500 & 1500 & 1500 & 1500 & 0.47 & 0.86 & 1.17 &23.99\\ 
        G50 & 3000 & 6000 & 1494.06 & 1440 & 1418.94 & 1440 &  & 226.4 & 1104.64 & 1.27 \\ 
        \textbf{G51} & 1000 & 5909 & 349.01 & 349 & 333.54 & 348 & & 184.94 & 0.38 & 0.29 \\ 
        \textbf{G52} & 1000 & 5916 & 348.43 & 348 & 332.37 & 348 & & 220.98 & 2.16 & 0.36 \\ 
        G53 & 1000 & 5914 & 348.39 & 346 & 324.99 & 346 & & 255.35 & 0.4 & 0.29 \\ 
        \textbf{G54} & 1000 & 5916 & 341.01 & 341 & 330.08 & 341 & & 120.39 & 23.68 & 0.29 \\ 
        G55 & 5000 & 12498 & 2324.17 & 2172 & 1877.34 & 2060 &  & 120.82 & 19.77 & 3.73 \\ 
        G56 & 5000 & 12498 & 2324.17 & 2172 & 1877.34 & 2060 &  & 124.67 & 18.9 & 3.7 \\ 
        \textbf{G57} & 5000 & 10000 & 2500 & 2500 & 2500 & 2500 &  & 0.96 & 1.87 & 3.53 \\ 
        \textbf{G58} & 5000 & 29570 & 1782.62 & 1782 & 1714.53 & 1778 & & 1291.86 & 8.22 & 7.62 \\ 
        \textbf{G59} & 5000 & 29570 & 1782.62 & 1782 & 1714.53 & 1778 & & 1286.69 & 7 & 6.9 \\ 
        G60 & 7000 & 17148 & 3265.04 & 3056 & 2643.67 & 2883 &  & 251.85 & 47 & 7.53 \\ 
        G61 & 7000 & 17148 & 3265.04 & 3056 & 2643.67 & 2883 &  & 246.22 & 47.75 & 8.32 \\ 
        \textbf{G62} & 7000 & 14000 & 3500 & 3500 & 3500 & 3500 & & 2.89 & 3.78 & 6.93 \\ 
        G63 & 7000 & 41459 & 2493.42 & 2486 & 2381.93 & 2484 &  & 1831.07 & 21.6 & 13.96 \\ 
        G64 & 7000 & 41459 & 2493.42 & 2486 & 2381.93 & 2484 & & 6426.76 & 21.35 & 14.05 \\ 
        \textbf{G65} & 8000 & 16000 & 4000 & 4000 & 4000 & 4000 & & 2.97 & 199.94 & 8.53 \\ 
        \textbf{G66} & 9000 & 18000 & 4500 & 4500 & 4500 & 4500 & & 4.46 & 239.33 & 11.19 \\ 
        \textbf{G67} & 10000 & 20000 & 5000 & 5000 & 5000 & 5000 & & 3.32 & 291.93 & 13.94 \\  
        \textbf{G70} & 10000 & 9999 & 6077.24 & 6077 & 6077 & 6077 & & 0.05 & $<$0.01 & $<$0.01 \\ 
        \textbf{G72} & 10000 & 20000 & 5000 & 5000 & 5000 & 5000 &  & 4.61 & 296.21 & 14.32 \\  
        \textbf{G77} & 14000 & 28000 & 7000 & 7000 & 6999.5 & 7000 & & 5.34 & 21.42 & 28.55 \\ 
        \textbf{G81} & 20000 & 40000 & 10000 & 10000 & 10000 & 10000 &  & 11.28 & 1046.98 & 56.23 \\
        \bottomrule
    \end{tabular}}
    \caption{Same experiments as in Table~\ref{table:comp_dimacs2}, using graphs in the GSet dataset~\cite{gset}. We do not include $ \alpha $, as this number is not included for this dataset.}
    \label{table:comp_gset}
\end{table}

\begin{table}[htb]
    \centering
    \normalsize
    \resizebox{\columnwidth}{!}{\begin{tabular}{ccccc|cccc|Hcccc}
        \toprule
        \multicolumn{5}{c|}{Graph}& \multicolumn{4}{c|}{Solution value} & \#VFA & \multicolumn{4}{c}{Time (s)} \\
         Name & $|N|$ & $|E|$ & $\vartheta$& $\alpha$ & Alg.\ref{alg:VFA2} & BY${}_{\text{avg}}$ & BY${}_{\text{best}}$ & WB &calls & SDP & Alg.\ref{alg:VFA2} & BY${}_{\text{total}}$&WB${}$ \\
        \midrule
        \multicolumn{13}{c}{DIMACS10}\\
        \midrule
        uk & 4824 & 6837 & 2286.98 & ~ & 2173 & 2066.26 & 2115 && 216643 & 53.32 & 18.33 & 2.68 \\ 
        data & 2851 & 15093 & 690.01 & ~ & 683 & 621.34 & 674 & &23602 & 118 & 5.07 & 2.37 \\ 
        fe\_4elt2 & 11143 & 32818 & 3631.78 & ~ & 3544 & 2780.99 & 3239 & &291529 & 371.03 & 93.07 & 24 \\ 
        \textbf{vsp\_p0291\_seymourl\_iiasa} & 10498 & 53868 & 6301.12 & 6301 & 6301 & 6232.51 & 6301 & &16279 & 753.19 & 6.57 & 20.62 \\ 
        cti & 16840 & 48232 & 8198.1 & ~ &8083&	7798.45&8080&&755812	&3420.47&152.84&68.99\\
        \textbf{fe\_sphere} & 16386 & 49152 & 5462.00&5462&5462& 4279.16&	5462&&110344	&49.70&	2575.14&	51.34\\
        cs4 & 22499 & 43858 & 9738.86 & ~ & 8987 & 8099.92 & 8273 & &6573556 & 534.88 & 2221.8 & 84.03 \\ 
        hi2010 & 25016 & 62063 & 11022.07 & ~ & 11012 & 10743.45 & 10926 & &758792 & 1464.83 & 81.59 & 21.19 \\ 
        ri2010 & 25181 & 62875 & 10819.96 & ~ & 10792 & 10460.01 & 10653 & &3157157 & 1281.81 & 1307.78 & 82.03 \\ 
        vt2010 & 32580 & 77799 & 15127.13 & ~ & 15118 & 14826.15 & 14980 & &2733536 & 1564.35 & 1287.42 & 103.47 \\ 
        nh2010 & 48837 & 117275 & 22890.74 & ~ & 22878 & 22452.91 & 22687 & &4898868 & 2467.79 & 3164.02 & 170.91 \\ 
        delaunay\_n14 & 16384 & 49122 & 5230.5 & ~ & 5132 & 4141.92 & 4503 & &609191 & 548.09 & 229.26 & 54.14 \\ 
        \midrule
        \multicolumn{12}{c}{SNAP}\\
        \midrule
        \textbf{ca-CondMat} & 23133 & 93497 & 9612.17 & 9612 & 9612 & 9477.4 & 9585 & &37818 & 8328.38 & 29.85 & 47.59& \\ 
        \textbf{p2p-Gnutella31} & 62586 & 76950 & 47974 & 47974 & 47974 & 47974 & 47974 &47974 &23 & 0.13 & $<$0.01 & $<$0.01 &$<$0.01\\ 
        \textbf{Oregon-2} & 11806 & 32730 & 9889 & 9889 & 9889 & 9888.94 & 9889 & 9889&137 & 2.53 & 5.01 & 0.39 &$<$0.01\\ 
        \textbf{p2p-Gnutella25} & 22687 & 30751 & 17116 & 17116 & 17116 & 17116 & 17116 & 17116&23 & 0.02 & $<$0.01 & $<$0.01 &$<$0.01\\ 
        \textbf{p2p-Gnutella24} & 26518 & 35828 & 19872 & 19872 & 19872 & 19872 & 19872 & 19872&8 & 0.01 & $<$0.01 & $<$0.01 &$<$0.01\\ 
        \textbf{as-caida\_G\_001} & 31379 & 32955 & 29037 & 29037 & 29037 & 29037 & 29037 & 29037&19 & 1.13 & $<$0.01 & $<$0.01 &$<$0.01\\ 
        \textbf{p2p-Gnutella30} & 36682 & 48507 & 28094 & 28094 & 28094 & 28094 & 28094 & 28094&32 & 0.1 & $<$0.01 & $<$0.01 &$<$0.01\\ 
        \bottomrule
    \end{tabular}}
    \caption{Same experiments as in Table~\ref{table:comp_dimacs2}, using graphs from the DIMACS10 and SNAP datasets~\cite{dimacs10,snapnets}}
    \label{table:comp_dimacs10}
\end{table}
\begin{table}
    \normalsize
    \centering
    \resizebox{0.7\columnwidth}{!}{\begin{tabular}{cccc|ccc|Hcccc}
        \toprule
        
        \multicolumn{4}{c|}{Graph}& \multicolumn{3}{c|}{Solution value} & \#VFA & \multicolumn{3}{c}{Time (s)} \\
Name & $|N|$ & $|E|$ & $\vartheta$ & Alg.\ref{alg:VFA2} & BY${}_{\text{avg}}$ & BY${}_{\text{best}}$ & calls & SDP & Alg.\ref{alg:VFA2} & BY${}_{\text{total}}$\\
\midrule
\textbf{MANN-a27} & 378 & 702 & 938 & 938 & 937.08 & 938 & 1621 & 7.62 & 8.79 & 0.49 \\
\textbf{MANN-a45} & 1035 & 1980 & 2596 & 2596 & 814.16 & 1791 & & 21.00 & 18.82 & 0.28 \\
\textbf{MANN-a81} & 3321 & 6480 & 8631 & 8631 & 2704.07 & 5939 & 54300 & 43.75 & 109.05 & 4.31 \\
\textbf{MANN-a9} & 45 & 72 & 109 & 109 & 109 & 109 & 82 & 0.02 & 0.01 & $<0.01$ \\
brock200-1 & 200 & 5066 & 173.47 & 144 & 55.23 & 106 & 635 & 5.16 & 1.49 & 0.06 \\
brock200-2 & 200 & 10024 & 89.68 & 71 & 21.55 & 56 & 412 & 19.3 & 1.04 & 0.1 \\
brock200-3 & 200 & 7852 & 118.69 & 93 & 32.55 & 76 & 477 & 11.85 & 1.14 & 0.07 \\
brock200-4 & 200 & 6811 & 128.47 & 103 & 36.69 & 95 & 547 & 8.83 & 1.36 & 0.06 \\
brock400-1 & 400 & 20077 & 246.12 & 161 & 51.3 & 102 & 1526 & 151.81 & 5.06 & 0.34 \\
brock400-2 & 400 & 20014 & 245.5 & 166 & 49 & 99 & 1548 & 146.95 & 5.4 & 0.49 \\
brock400-3 & 400 & 20119 & 250.03 & 169 & 53.28 & 111 & 1462 & 130.22 & 5.86 & 0.34 \\
brock400-4 & 400 & 20035 & 243.72 & 152 & 49.13 & 106 & 1572 & 150.03 & 9.02 & 0.34 \\
brock800-1 & 800 & 112095 & 259.25 & 122 & 27.79 & 83 & 2480 & 83.08 & 25.17 & 5.38 \\
brock800-2 & 800 & 111434 & 260.98 & 112 & 28.85 & 86 & 2587 & 83.92 & 28.51 & 3.88 \\
brock800-3 & 800 & 112267 & 266.33 & 143 & 30.09 & 75 & 2435 & 86.77 & 25.87 & 4.31 \\
brock800-4 & 800 & 111957 & 265.39 & 123 & 29.14 & 76 & 2545 & 83.73 & 25.94 & 4.06 \\
\textbf{c-fat200-1} & 200 & 18366 & 94 & 94 & 94 & 94 & 120 & 2 & 9.6 & 0.76 \\
\textbf{c-fat200-2} & 200 & 16665 & 147 & 147 & 146.27 & 147 & 210 & 2.44 & 14.83 & 0.28 \\
\textbf{c-fat200-5} & 200 & 11427 & 374 & 374 & 374 & 374 & 210 & 14.84 & 0.2 & 0.13 \\
\textbf{c-fat500-10} & 500 & 78123 & 732 & 732 & 727.59 & 732 & 510 & 55.87 & 15.27 & 2.67 \\
\textbf{c-fat500-1} & 500 & 120291 & 97 & 97 & 96.81 & 97 & 422 & 5.2 & 5.71 & 2.39 \\
\textbf{c-fat500-2} & 500 & 115611 & 165 & 165 & 165 & 165 & 510 & 29.64 & 199.23 & 2.71 \\
\textbf{c-fat500-5} & 500 & 101559 & 410 & 410 & 409.55 & 410 & 510 & 230.39 & 1.43 & 2.03 \\
c1000.9 & 1000 & 49421 & 759.58 & 464 & 140.96 & 272 & 7452 & 1697.06 & 1400.01 & 9.85 \\
c125.9 & 125 & 787 & 218.98 & 195 & 138.33 & 189 & 499 & 8.02 & 8.61 & 0.55 \\
c2000.5 & 2000 & 999164 & 280.52 & 99 & 15.35 & 66 & 4342 & 33155.19 & 879.2 & 227.15 \\
c2000.9 & 2000 & 199468 & 1105.27 & 344 & 48.83 & 242 & 1946 & 72603.8 & 3033.9 & 22.22 \\
c250.9 & 250 & 3141 & 341.49 & 271 & 146.54 & 233 & 1364 & 1.67 & 2.7 & 0.07 \\
c4000.5 & 4000 & 3997732 & 395.4 & 106 & 13.75 & 57 & 8535 & 462610.07 & 3140.67 & 1181.26 \\
c500.9 & 500 & 12418 & 523.52 & 365 & 143.28 & 270 & 3287 & 50.33 & 60.73 & 0.46 \\
dsjc1000\_5 & 1000 & 249674 & 200.04 & 92 & 18.05 & 59 & 2165 & 109.71 & 813.45 & 25.78 \\
dsjc500\_5 & 500 & 62126 & 146.51 & 96 & 18.95 & 57 & 1069 & 28.27 & 173.71 & 1.6 \\
gen200\_p0.9\_44 & 200 & 1990 & 291.78 & 246 & 142.57 & 229 & 888 & 1.28 & 18.26 & 0.06 \\
\textbf{gen200\_p0.9\_55} & 200 & 1990 & 338 & 338 & 338 & 338 & 399 & 1.17 & 11.09 & 0.17 \\
gen400\_p0.9\_55 & 400 & 7980 & 423.61 & 324 & 177.53 & 289 & 2064 & 22.25 & 333.86 & 0.32 \\
gen400\_p0.9\_65 & 400 & 7980 & 446.12 & 352 & 168.08 & 282 & 2286 & 21.6 & 218.57 & 0.38 \\
gen400\_p0.9\_75 & 400 & 7980 & 455.6 & 320 & 187.45 & 409 & 2426 & 21.11 & 230.69 & 0.34 \\
\textbf{hamming10-2} & 1024 & 5120 & 2800 & 2800 & 2792.28 & 2800 & 2063 & 11.42 & 23.95 & 2.33 \\
hamming10-4 & 1024 & 89600 & 364.71 & 285 & 122.63 & 238 & 2973 & 9918.86 & 45700.81 & 8.57 \\
\textbf{hamming6-2} & 64 & 192 & 173 & 173 & 173 & 173 & 128 & 8.59 & 3.94 & 0.08 \\
hamming6-4 & 64 & 1312 & 38.1 & 37 & 21.05 & 37 & 87 & 0.48 & 0.1 & $<0.01$ \\
\textbf{hamming8-2} & 256 & 1024 & 737 & 737 & 723.47 & 737 & 518 & 0.21 & 0.32 & 0.02 \\
\textbf{hamming8-4} & 256 & 11776 & 123 & 123 & 123 & 123 & 320 & 32.29 & 4.93 & 0.16 \\
\textbf{johnson16-2-4} & 120 & 1680 & 77 & 77 & 77 & 77 & 173 & 0.5 & 0.35 & 0.02 \\
\textbf{johnson32-2-4} & 496 & 14880 & 157 & 157 & 148.84 & 157 & 621 & 91.22 & 148.94 & 0.9 \\
\textbf{johnson8-2-4} & 28 & 168 & 38 & 38 & 38 & 38 & 45 & 0.04 & 0.02 & $<0.01$ \\
\textbf{johnson8-4-4} & 70 & 560 & 105 & 105 & 105 & 105 & 131 & 0.14 & 0.03 & $<0.01$ \\
keller4 & 171 & 5100 & 95.73 & 79 & 31.5 & 76 & 374 & 3.47 & 1.55 & 0.04 \\
keller5 & 776 & 74710 & 234.45 & 176 & 44.28 & 125 & 2038 & 1134.35 & 125.81 & 3.14 \\
keller6 & 3361 & 1026582 & 538.62 & 277 & 64.92 & 203 & 2500 & 212884.82 & 11605.11 & 259.51 \\
p-hat1000-1 & 1000 & 377247 & 109.19 & 64 & 13.85 & 47 & 1596 & 1641.78 & 66.42 & 15.51 \\
p-hat1000-2 & 1000 & 254701 & 335.69 & 277 & 121.58 & 245 & 2373 & 2272.97 & 254.03 & 12.79 \\
p-hat1000-3 & 1000 & 127754 & 519.58 & 406 & 151.7 & 321 & 4379 & 3162.75 & 77.13 & 6.62 \\
p-hat1500-1 & 1500 & 839327 & 136.33 & 56 & 12.92 & 52 & 1734 & 12036.66 & 216.02 & 59.58 \\
p-hat1500-2 & 1500 & 555290 & 460.14 & 292 & 146.28 & 297 & 862 & 10208.36 & 284.65 & 104.05 \\
p-hat1500-3 & 1500 & 277006 & 697.66 & 332 & 203.96 & 467 & 1336 & 36496.32 & 3961.68 & 26.99 \\
p-hat300-3 & 300 & 11460 & 245.55 & 221 & 97.72 & 187 & 825 & 44.24 & 12.05 & 0.68 \\
p-hat500-1 & 500 & 93181 & 82.56 & 58 & 15.74 & 58 & 817 & 202.27 & 9.87 & 1.89 \\
p-hat500-2 & 500 & 61804 & 239.35 & 220 & 110.89 & 191 & 969 & 554.96 & 19.46 & 1.22 \\
p-hat500-3 & 500 & 30950 & 353.66 & 317 & 149.25 & 303 & 1584 & 659.87 & 8.71 & 0.62 \\
p-hat700-1 & 700 & 183651 & 94.58 & 59 & 15.44 & 46 & 1127 & 526.38 & 25.86 & 4.62 \\
p-hat700-2 & 700 & 122922 & 300.27 & 250 & 117.79 & 230 & 1654 & 890.9 & 36.62 & 3.89 \\
p-hat700-3 & 700 & 61640 & 436.92 & 367 & 33.74 & 248 & 2622 & 2497.79 & 108.12 & 2.6 \\
san1000 & 1000 & 249000 & 114.51 & 87 & 19.82 & 76 & 1780 & 2698.35 & 125.21 & 10.73 \\
\textbf{san200-0-7-1} & 200 & 5970 & 159 & 159 & 159 & 159 & 282 & 5.39 & 0.58 & 0.07 \\
san200-0-7-2 & 200 & 5970 & 125.31 & 101 & 63.13 & 108 & 595 & 6.6 & 4.49 & 0.05 \\
\textbf{san200-0-9-1} & 200 & 1990 & 398 & 398 & 398 & 398 & 498 & 0.78 & 0.42 & 0.02 \\
san200-0-9-2 & 200 & 1990 & 295.59 & 251 & 163.16 & 268 & 1003 & 0.89 & 2.38 & 0.02 \\
san200-0-9-3 & 200 & 1990 & 286.18 & 246 & 158.95 & 241 & 912 & 0.88 & 2.3 & 0.04 \\
san400-0-5-1 & 400 & 39900 & 83.17 & 72 & 23.88 & 63 & 685 & 849.72 & 11.59 & 0.61 \\
san400-0-7-1 & 400 & 23940 & 210.23 & 173 & 62.9 & 155 & 1304 & 280.5 & 6.05 & 0.58 \\
san400-0-7-2 & 400 & 23940 & 191.04 & 129 & 47.58 & 120 & 1244 & 281.6 & 6.61 & 0.42 \\
san400-0-7-3 & 400 & 23940 & 171.06 & 128 & 43.33 & 96 & 1129 & 281.84 & 10.56 & 0.4 \\
san400-0-9-1 & 400 & 7980 & 489.08 & 386 & 282.29 & 489 & 2574 & 17.21 & 8.33 & 0.18 \\
sanr200-0-7 & 200 & 6032 & 149.63 & 121 & 44.62 & 112 & 580 & 5.78 & 1 & 0.05 \\
sanr200-0-9 & 200 & 2037 & 298.27 & 262 & 136.82 & 217 & 914 & 0.97 & 1.74 & 0.02 \\
sanr400-0-5 & 400 & 39816 & 129.77 & 75 & 20.16 & 57 & 881 & 15.4 & 3.49 & 0.61 \\
sanr400-0-7 & 400 & 23931 & 211.66 & 134 & 43.06 & 97 & 1378 & 240.72 & 5.37 & 0.45 \\
    \end{tabular}}
\caption{
Comparison of Algorithm~\ref{alg:VFA2} with Benson-Ye (BY) on the DIMACS benchmark instances in the order of Table~\ref{table:comp_dimacs2}, with weights independently sampled from $\operatorname{Uniform}([10])$. All instances are complemented first; $|E|$ is the number of edges after taking the complement. Algorithm~\ref{alg:VFA2} reaches optimality or the best known bound for bolded instances. We run the randomized BY method $|N|$ times and report the average and best value obtained.}
\label{table:weighted_comp_dimacs_full}
\end{table}

\section*{Declarations}
The authors’ work was partially funded by the U.S. Office of Naval Research, N00014-23-1-2631.

The authors have no other competing interests to declare that are relevant to the content of this article.
\newpage
{
\small
\bibliographystyle{splncs04}
\bibliography{SDPVFA}
}

\newpage
\appendix
\section{Additional proofs}
\begin{lemma}
\label{lem:PSD and range}
    Given fixed $Q\in\PSD^{n}$, $q\in\R^{n}$, there exists $\varphi>0$ such that $\varphi qq^\top\preceq Q$ if and only if $q\in\range(Q)$.
\end{lemma}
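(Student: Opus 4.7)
The plan is to prove both implications directly from basic PSD facts, avoiding any appeal to Lemma~\ref{lem:comp equiv} (whose proof already invokes this very lemma). The main tool I would use is the standard identity that, for $Q\in\PSD^{n}$, $v^\top Q v=0$ if and only if $Qv=0$, together with $\range(Q)=\ker(Q)^\perp$ for symmetric $Q$.

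For the forward direction, suppose $\varphi qq^\top\preceq Q$ with $\varphi>0$. The idea is to test the inequality against vectors in $\ker Q$. For any $v\in\ker Q$ we have
\[
0\;\leq\;\varphi(q^\top v)^{2}\;=\;v^\top(\varphi qq^\top)v\;\leq\;v^\top Q v\;=\;0,
\]
which forces $q^\top v=0$. Hence $q\perp\ker Q$, i.e.\ $q\in\range Q$.

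For the reverse direction, suppose $q\in\range Q$. If $q=0$ then any $\varphi>0$ works, so assume $q\neq 0$ and define $\varphi:=1/(q^\top Q^\dagger q)>0$. The goal is to show $\varphi qq^\top\preceq Q$, equivalently $x^\top Q x\geq\varphi(q^\top x)^{2}$ for every $x\in\R^{n}$. I would verify this by spectral decomposition: writing $Q=U\Lambda U^\top$ with $\Lambda=\Diag(a_1,\dots,a_r,0,\dots,0)$ and $a_i>0$, the hypothesis $q\in\range Q$ allows us to write $U^\top q=(b_1,\dots,b_r,0,\dots,0)^\top$, and then $q^\top Q^\dagger q=\sum_{i=1}^{r}b_i^{2}/a_i$. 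After the change of variables $y=U^\top x$, the desired inequality reduces to
\[
\left(\sum_{i=1}^{r}\tfrac{b_i^{2}}{a_i}\right)\left(\sum_{i=1}^{r}a_i y_i^{2}\right)\;\geq\;\left(\sum_{i=1}^{r}b_i y_i\right)^{2},
\]
which is Cauchy--Schwarz applied to $(b_i/\sqrt{a_i})_i$ and $(\sqrt{a_i}\,y_i)_i$.

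I do not expect any real obstacle: the forward direction is a one-line consequence of $v^\top Q v=0\Leftrightarrow Qv=0$, and the reverse direction is the same calculation already executed inside Lemma~\ref{lem:comp equiv}. The only point requiring care is to present the Cauchy--Schwarz step self-contained here rather than as a reference to Lemma~\ref{lem:comp equiv}, to avoid circular reasoning.
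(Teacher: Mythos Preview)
Your proof is correct and follows essentially the same route as the paper's: both test the forward implication against vectors in $\ker Q$ and handle the reverse implication via spectral decomposition. The only difference is that the paper merely asserts existence of a suitable $\varphi$ once the matrix is reduced to the positive-definite leading $r\times r$ block, whereas you supply the sharp constant $\varphi=1/(q^\top Q^\dagger q)$ and verify it by Cauchy--Schwarz---a minor sharpening, and your caution about not invoking Lemma~\ref{lem:comp equiv} to avoid circularity is well placed.
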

\begin{proof}
    Suppose there exists $\varphi>0$ such that $\varphi qq^\top\preceq Q$. 
    For contradiction, assume $q\notin\range(Q)$; then $q=a+b$, where $a\in\range(Q)$ and $b\in\Null(Q)\setminus\{0\}$ by the orthogonal decomposition. Then
\[
0<\varphi\|b\|_2^2=\varphi b^\top qq^\top b\leq b^\top Qb=0 ,
\]
which gives a contradiction.

Suppose $q\in\range(Q)$; then we can write $q=Qy$ for some $y$. 
By the singular value decomposition, there is an orthonormal matrix $U$ and a diagonal matrix $D$ such that $Q=UDU^\top$.
Without loss of generality, assume $D_{11}\geq \ldots D_{rr}>0$, for $r:=\rank(Q)$.
Thus, we have 
\[
U^\top qq^\top U=DU^\top yy^\top UD,
\]
where $[U^\top qq^\top U]_{ij}=0$ if $i>r$ or $j>r$.
Hence, $ U^\top qq^\top U\preceq \varphi D $ for some $\varphi>0$, and $ \varphi^{-1}qq^\top\preceq UDU^\top $.
   \end{proof}

\end{document}